\newcommand{\Aff}{{\text{Aff}}}
\newcommand{\Aut}{{\text{Aut}}}
\newcommand{\SL}{{\text{SL}}}
\newcommand{\GL}{{\text{GL}}}
\newcommand{\PGL}{{\text{PGL}}}
\newcommand{\PM}{{\text{PM}}}
\newcommand{\br}{{\mathbb R}}
\newcommand{\RP}{{\mathbb{{RP}}}}
\newcommand{\ra}{{\rightarrow}}
\newcommand{\rp}[1]{\mathbb {RP}^{#1}}
\newtheorem{theorem}{Theorem}[section]
\newtheorem{Prop}[theorem]{Proposition}
\newtheorem{lemma}[theorem]{Lemma}
\newtheorem{co}[theorem]{Corollary}
\numberwithin{equation}{section}
\DeclareMathOperator{\AC}{AC} 
\theoremstyle{definition}
\newtheorem{Def}[theorem]{Definition}
\newtheorem{remark}[theorem]{Remark}
\begin{document}
\title[Markus Conjecture]
{On the Markus conjecture in Convex case}
\author{Kyeonghee Jo and Inkang Kim}
\subjclass[2010]{57N16, 52A20} \keywords{Markus Conjecture, parallel volume, quasi-homogeneous, homogeneous, limit set}

\date{}
\address{Division of Liberal Arts and sciences,
  Mokpo National Maritime University, Mokpo, Chonnam, 58628, Korea  }
\email{khjo@mmu.ac.kr}
\address{School of Mathematics, Korea Institute for Advanced Study, Seoul, 02455, Korea }
\email{inkang@kias.re.kr}
\thanks {The second author gratefully acknowledges the partial support of  Grant NRF-2017R1A2A2A05001002.}
\maketitle

\begin{abstract}
In this paper, we show that any convex affine domain with a nonempty 
limit sets on the boundary under the action of the identity
component of the automorphism group cannot cover a compact affine
manifold with a parallel volume, which is a positive answer to the Markus conjecture for convex case. Consequently, we  show that the Markus conjecture is true for convex affine manifolds of  dimension  $\leq 5$.  
\end{abstract}

\section{Introduction}
A topological manifold $M$ can be equipped with a $(G,X)$-structure where $X$ is a model space and $G$ is a group acting on $X$ so that $M$ has an atlas $(\phi_i, U_i)$ from
open sets $U_i$ in $M$ into open sets in $X$ and the transition maps $\phi_i\circ \phi_j^{-1}$ are restrictions of  elements in $G$.
Depending on the choice of $(G,X)$, many interesting geometric structures can arise.  For instance, if $M$ is a closed surface with  genus at least 2, a hyerbolic
structure corresponds to $(\text{PSL}(2,\mathbb R),\mathbb H^2)$, a real projective structure  to $(\PGL(3,\mathbb R), \RP^2)$, a complex projective structure to $(\text{PSL}(2,\mathbb C), \mathbb{CP}^1)$. In this paper, we are concerned with an affine structure $(\text{Aff}(n,\br), \br^n)$ where
$\text{Aff}(n,\br)$ is the affine group $\GL(n,\br)\ltimes \br^n$.

Given a geometric structure, there exist a developing map $D:\widetilde M \ra X$ and a holonomy homomorphism $\rho:\pi_1(M)\ra G$ so that $D$ is $\rho$-equivariant. When the developed image $D(\widetilde M)=X$, it is said that $M$ has a complete $(G,X)$ structure.
 Depending on the topology of $M$, only specific complete geometric structures are allowed. For example, if $M$ is a torus, $M$ cannot have a complete hyperbolic structure. This fact can be seen also via Gauss-Bonnet theorem.

Sometimes we are concerned with a complete affine structure with a specific holonomy group. It is said that an affine manifold has  {\bf parallel volume} if the linear part of the holonomy group lies in $\SL(n,\br)$.

In 1962, Markus conjectured that a compact affine manifold $M$ with parallel volume is complete, i.e., $M=\br^n/\Gamma$ where $\Gamma$ is a discrete subgroup of $\Aff(n,\br)$, see \cite{Ma}.

When the developed image $D(\widetilde M)$ is a convex domain $\Omega$ in $\br^n$, it is said that $M$ has a convex affine structure.
 Markus conjecture says that such an affine manifold cannot have a parallel volume if $\Omega \neq \br^{n}$. In this paper, we prove this 
under the condition that the limit set $\Lambda_{\Aut^0(\Omega)}$, the set of all limit points on the boundary under the action of the identity component of $\Aut(\Omega)$, is nonempty. Here $\Aut(\Omega)=\{g\in
\GL(n,\br)\ltimes \br^n|\ g(\Omega)=\Omega\}$, is the group of affine automorphisms of
$\Omega$.
\begin{theorem}\label{Markus}
Let $M$ be a compact convex affine manifold whose developing image $\Omega$ is a proper subset of $\br^n$. Then $M$
cannot have parallel volume if $\Lambda_{\Aut^0(\Omega)}$ is
nonempty.
\end{theorem}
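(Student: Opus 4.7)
Argue by contradiction: assume that $M$ has parallel volume, so the holonomy $\Gamma=\rho(\pi_{1}(M))\subset\Aut(\Omega)$ satisfies $|\det L(\gamma)|=1$ for every $\gamma\in\Gamma$, where $L(\gamma)\in\GL(n,\br)$ denotes the linear part. In particular $\Gamma$ preserves the standard Lebesgue measure $d\mu$ on $\br^{n}$, and since $M=\Omega/\Gamma$ is compact there is a compact fundamental set $K\subset\Omega$ with $\Gamma\cdot K=\Omega$; in particular $\Omega$ is quasi-homogeneous under $\Gamma$ alone.

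The strategy is to turn the limit-set hypothesis into a one-parameter subgroup of $\Aut^{0}(\Omega)$ that escapes to $\partial\Omega$, renormalize it using cocompactness of $\Gamma$, and force a contradiction with volume preservation. By the hypothesis $\Lambda_{\Aut^{0}(\Omega)}\neq\emptyset$, pick $x_{0}\in\Omega$ and $h_{n}\in\Aut^{0}(\Omega)$ with $h_{n}(x_{0})\to p\in\partial\Omega$; since $\Aut^{0}(\Omega)$ is a connected Lie group whose orbit of $x_{0}$ is not pre-compact, a standard argument via the exponential map produces a nontrivial one-parameter subgroup $\phi_{t}=\exp(tX)$ with $\phi_{t}(x_{0})$ accumulating on $\partial\Omega$ as $t\to\infty$. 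Then choose $\gamma_{t}\in\Gamma$ so that $\psi_{t}:=\gamma_{t}\phi_{t}$ sends $x_{0}$ back into $K$. Because $|\det L(\gamma_{t})|=1$, we have $|\det L(\psi_{t})|=|\det L(\phi_{t})|$ for every $t$, and a subsequence $\psi_{t_{n}}$ converges (after suitable projective rescaling of the linear parts) to an affine limit $\psi_{\infty}$ satisfying $\psi_{\infty}(\Omega)\subset\overline{\Omega}$.

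The contradiction would then be extracted by a case analysis on the Jordan decomposition of $X\in\Aff(n,\br)$. If $X$ has any eigenvalue with nonzero real part, then $|\det L(\phi_{t_{n}})|$ escapes to $0$ or $\infty$ along some invariant direction, forcing the linear parts $L(\psi_{t_{n}})$ to degenerate; the image $\psi_{\infty}(\Omega)$ then lies in a proper affine subspace of $\br^{n}$, and $\Aut(\Omega)$-equivariance combined with the quasi-homogeneity under $\Gamma$ propagates this rank drop to all of $\Omega$, contradicting that $\Omega$ is open in $\br^{n}$. If the spectrum of $X$ is purely imaginary, the linear flow generates a relatively compact subgroup and $\phi_{t}(x_{0})$ remains linearly bounded, so the escape to $\partial\Omega$ can only come from a translation part; this translation part is incompatible with the $\Gamma$-invariance of $d\mu$ and with $K$ being a compact fundamental set for $\Omega$. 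The main obstacle is the mixed-spectrum case, where some eigenvalues of $X$ have zero real part and others do not: one has to combine the convexity of $\Omega$, the boundedness of $M$, and the precise action of the semisimple and nilpotent components of $X$ on $\partial\Omega$ to produce a genuinely contracting direction of $\phi_{t}$ whose contraction rate cannot be compensated by any $\gamma_{t}\in\Gamma\subset\SL^{\pm}(n,\br)\ltimes\br^{n}$. This is precisely where the limit-set hypothesis meets the parallel-volume condition, and it is where I expect the delicate part of the argument to live.
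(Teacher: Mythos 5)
Your proposal does not close: you yourself flag the mixed-spectrum case as the place where you ``expect the delicate part of the argument to live,'' and that case is never treated, so no contradiction is actually extracted. Moreover, the two cases you do sketch contain errors that suggest the whole determinant-bookkeeping strategy cannot work. In the real-spectrum case, the claim that an eigenvalue of $X$ with nonzero real part forces $|\det L(\phi_{t})|$ to escape to $0$ or $\infty$ is false: the one-parameter group $\phi_{t}=\mathrm{diag}(e^{t},e^{-t})$ preserves the quadrant $\{x>0,\ y>0\}$, has $\det L(\phi_{t})\equiv 1$, and still drives every orbit to a finite boundary point. Since parallel volume constrains only the full determinant of elements of $\Gamma$, and since $|\det L(\psi_{t_n})|=|\det L(\phi_{t_n})|$ can remain identically $1$, the renormalized maps $\psi_{t_n}$ need not degenerate at all, and no rank drop or ``uncompensatable contraction'' can be read off. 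This is not a peripheral case: by Theorem \ref{VeyThm} a divisible properly convex affine domain is a cone, and unimodular hyperbolic one-parameter subgroups escaping to the cone point are exactly the generic picture, so the configuration your argument is supposed to exclude is in fact ubiquitous and compatible with $\Gamma\subset\SL^{\pm}(n,\br)\ltimes\br^{n}$. In the purely imaginary case, the assertion that a translation part is ``incompatible with the $\Gamma$-invariance of $d\mu$'' is unsupported, since translations preserve Lebesgue measure.

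The paper's proof is non-dynamical and outsources the interaction with parallel volume entirely to Goldman and Hirsch \cite{GH1,GH2}: parallel volume implies that the holonomy $\Gamma$ is irreducible and preserves no proper semi-algebraic subset of $\br^{n}$. The whole point is then to show that $\Lambda_{G^{0}}\neq\emptyset$ together with irreducibility of $G=\Aut(\Omega)$ forces $\Omega$ to be \emph{homogeneous} (Proposition \ref{homo}: the limit set supplies one-parameter subgroups $g_{E}(t)$ fixing boundary faces, and their orbits and horosphere foliations are shown to sweep out all of $\Omega$), whence $\Omega$ is semi-algebraic by Proposition \ref{algebraic} (Goldman--Hirsch, Proposition 2.15 of \cite{GH2}: the developing image of a homogeneous affine manifold is a semi-algebraic open set). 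A proper, $\Gamma$-invariant, semi-algebraic $\Omega$ is the desired contradiction. If you want to salvage your approach, the missing ingredient is precisely some such global algebraicity or transitivity statement; eigenvalue and determinant estimates alone do not see the parallel-volume hypothesis.
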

Furthermore we will see  in section \S \ref{dim5} that the Markus conjecture is  true for convex case when its dimension is $\leq 5$ through the classification of their developing images.
\begin{theorem}\label{main2}
The Markus conjecture is true for convex affine manifolds of dimension $\leq 5$.
\end{theorem}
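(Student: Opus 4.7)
The plan is to deduce Theorem~\ref{main2} from Theorem~\ref{Markus} via a case-by-case analysis of the possible developing images. By Theorem~\ref{Markus}, it is enough to verify that every proper convex domain $\Omega \subsetneq \br^n$ with $n \leq 5$ that covers a compact affine manifold has $\Lambda_{\Aut^0(\Omega)} \neq \emptyset$. Together with the obvious observation that $\Omega = \br^n$ already makes $M$ complete, this proves the Markus conjecture in dimensions $\leq 5$.

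First I would set up the divisibility framework. Since $M = \Omega/\Gamma$ is compact, $\Gamma \leq \Aut(\Omega)$ acts cocompactly on $\Omega$, so $\Omega$ is divisible. Using the standard decomposition of a divisible proper convex set into an affine direct product of indecomposable divisible factors (passing to a finite cover of $M$ if necessary), the problem reduces to the indecomposable case: a boundary limit point of $\Aut^0$ of any single factor immediately yields one for $\Aut^0(\Omega)$ acting on $\partial\Omega$.

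Second, I would go through the classification of indecomposable divisible proper convex affine domains in each dimension up to $5$. The resulting list, assembled from the work of Koszul, Vey, Vinberg, Benoist, and the authors' earlier papers, is explicit and short: the half-line, the simplicial and Lorentz cones fitting inside the dimension bound, affine cones over strictly convex divisible projective domains, and the handful of quasi-homogeneous but non-homogeneous examples that first appear in dimensions $3$, $4$, $5$. For each such $\Omega$ I would exhibit an explicit one-parameter subgroup in $\Aut^0(\Omega)$ with an attracting point on $\partial\Omega$: the radial scaling in the conic cases fixes the apex, and analogous semisimple or unipotent flows handle the remaining cases.

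The main obstacle will be ensuring that the classification in dimensions $4$ and $5$ is genuinely complete, and checking in each case that the limit point lies in the \emph{identity component} $\Aut^0(\Omega)$ rather than merely in $\Aut(\Omega)$. The latter is essential because Theorem~\ref{Markus} hinges on $\Aut^0$; fortunately, divisibility forces $\Aut^0(\Omega)$ to be noncompact (otherwise a cocompact action on a noncompact convex domain would be impossible), and a noncompact connected group of affine automorphisms preserving a proper convex domain must accumulate to the boundary, so existence of a limit point in $\Aut^0$ is automatic once noncompactness is verified in each individual entry of the list.
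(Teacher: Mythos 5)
Your overall strategy --- reduce to Theorem \ref{Markus} by showing $\Lambda_{\Aut^0(\Omega)}\neq\emptyset$ for every relevant domain in dimension $\leq 5$ --- is the paper's strategy. But two of your reduction steps contain genuine errors, and together they make the hard part of the argument disappear.

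First, the decomposition. When $\Omega$ contains a complete line it splits as $\mathbb R^k\times\Omega'$ with $\Omega'$ properly convex, but $\Omega'$ is \emph{not} in general divisible: the projection of $\Gamma$ to $\Aut(\Omega')$ need not be discrete (nor cocompact by a discrete subgroup), and passing to a finite cover does not fix this. The factor is only \emph{quasi-homogeneous}. This matters enormously: if the factors really were divisible and properly convex, Theorem \ref{VeyThm} (Vey) would force each to be a cone and you would be done with no classification at all. The reason the paper spends Propositions \ref{thm-dim2}--\ref{dim4-asy4} classifying quasi-homogeneous properly convex affine domains in dimension $\leq 4$ is precisely that non-divisible, non-conical examples occur as factors --- paraboloids, parabola $\times\,\mathbb R^+$, parabola $\times$ parabola, $\{(x_2-x_1^2)x_3>x_4^2\}$, etc. Your list, framed as ``indecomposable divisible'' domains, would by Vey's theorem consist only of cones and would omit exactly these cases; the internal tension in your own phrasing (``quasi-homogeneous but non-homogeneous examples'' inside a list of divisible domains) reflects this. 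Note also that the paper only needs the classification up to dimension $4$, since a properly convex divisible $\Omega$ (no complete line) is a cone by Vey directly, regardless of $n$.

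Second, your closing claim that noncompactness of $\Aut^0(\Omega)$ makes a limit point ``automatic'' is false under the paper's definition. The limit set $\Lambda_{G^0}$ consists of accumulation points of orbits on the \emph{finite} boundary $\partial\Omega\subset\mathbb R^n$; a noncompact $\Aut^0$-orbit can escape entirely through the boundary at infinity, leaving $\Lambda_{\Aut^0(\Omega)}=\emptyset$. The paper proves noncompactness of $G^0$ in general (Proposition \ref{noncpt}) and explicitly remarks that this does \emph{not} yet settle the conjecture --- if your inference were valid, the dimension restriction and the entire classification in Section \ref{dim5} would be unnecessary. The actual proof closes this gap case by case: each of the $19$ possible factors is shown to be either homogeneous (so $\Lambda_{\Aut^0}=\partial\Omega'$) or a cone (so the cone vertex, a finite boundary point, lies in $\Lambda_{\Aut^0}$). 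You need that dichotomy, established for \emph{quasi-homogeneous} factors, not a soft noncompactness argument.
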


Though there are several partial results concerning this conjecture \cite{C,Fr,FG,GH1,Jo}, yet it is far from being completely resolved.  Goldman and Hirsch \cite{GH2} showed that the affine holonomy $\Gamma$ of a compact affine $n$-manifold with parallel volume preserves no proper (semi)-algebraic subset of $\br^n$, and in fact the algebraic hull
$A(\Gamma)$ of $\Gamma$ acts transitively on $\br^n$. We will use this fact in the proof of our main theorem.
We also proved the conjecture under the some suitable assumption on projective automorphism of the domain \cite{JK} to which the current paper is a sequel.
We hope to resolve the conjecture  without the assumption about the limit set in due time. See Proposition \ref{noncpt}.

\section{preliminaries}
In this section, we review the basic concepts and properties for convex domains in projective space. To begin with, let us be precise about our terminology. When we speak of a simplex in this paper, we mean the domain which consists of all the points in the interior of the simplex. The same is true for a polyhedron, an ellipsoid,  a paraboloid, an elliptic cone etc.
\begin{Def}
 A subgroup $G$ of $\text{Aff}(n,\br)$ is said to be $irreducible$
if $G$ preserves no proper affine subspace of $\br^n$.
\end{Def}

Sometimes we  look at domains and their automorphisms in the
projective space $\RP^n=(\br^{n+1}\setminus \{0\})/\br^*$ and in
$\PGL(n+1,\br)$, where $\mathbb R^*=\mathbb R \setminus \{0\}$.
Naturally $\PGL(n+1,\br)$ acts on $\RP^n$ as projective
transformations. Denote the projectivization of $n$ by $n$
matrices by $\PM(n,\br)$. Recall that a domain in $\br^n$ can be
viewed as a domain in $\RP^n$ whose automorphism group preserves
the set of points at infinity, $\mathbb {RP}^{n-1}_{\infty }$, by
identifying $\mathbb R^n$ with the affine space given by
$x_{n+1}=1$ in $\mathbb R^{n+1}$ so that $\mathbb {RP}^n$ becomes
a compactification of $\mathbb R^n$.

Since $\mbox{PM}(n+1,\mathbb R)$ is a compactification of
$\PGL(n+1,\br)$, any infinite sequence of non-singular projective
transformations contains a convergent subsequence in
$\mbox{PM}(n+1,\mathbb R)$. Note that the limit projective
transformation $g$  of a sequence of non-singular projective
transformations ${g_i}$ might be singular. We will denote the projectivization of the
kernel and range of $g$ by $K(g)$ and $R(g)$. Then $g$ maps $\RP^n
\setminus K(g)$ onto $R(g)$, and the $g_i$-images of any compact set in
$\RP^n \setminus K(g)$ converges uniformly to the image under the limit transformation $g$
of ${g_i}$ (see \cite {Ben}). This implies that $g_i(p_i)$ converges to $g(p)$ if $p_i$ converges to $p\notin K(g)$.

\begin{Def} Let $\Omega$ be a convex projective domain and $\overline \Omega$ be the closure of $\Omega$ in the projective space.
\begin{enumerate}
\item[\rm (i)] { $\Omega$ is called \emph{properly convex} if it
does not contain any complete line.}
\item[\rm (ii)] {A \emph{face} of $\Omega$ is an equivalence class with respect to
the equivalence relation given as follows.
$x\sim y$ if either $x=y$ or $\overline \Omega$ has an open line
segment containing both $x$ and $y$. Then a face is a relatively
open convex subset of $\overline \Omega$ and $\overline \Omega$ is a
disjoint union of faces.}
\item[\rm (iii)] {A \emph{support} of a face $F$ is the subspace generated by $F$. We will denote the support of $F$ by $\langle F \rangle$.}
\item[\rm (iv)] {The \emph{dimension} of a face $F$ is the dimension of the
support.}
\item[\rm (v)] {A \emph{supporting subspace} $V$ of $\Omega$ is a subspace containing
every support of a face intersecting $V$.}
\item[\rm (vi)]{Zero dimensional face is called  an \emph{extreme point}.}
\end{enumerate}
\end{Def}
 We say that $E$ is a \emph{closed face} of $\Omega$ if $E=\overline{F},$ for a face $F$ of  $\Omega$.
\begin{Def}\label{def-Ben}
\mbox{}
Let $\Omega$ be a properly convex domain in $\rp{n}$.
\begin{enumerate}
\item [\rm (i)] Let $\Omega_1$ and
$\Omega_2$ be convex domains in $\langle \Omega_1 \rangle$ and
$\langle \Omega_2 \rangle$ respectively. $\Omega$ is called a
\emph{convex sum} of $\Omega_1$ and $\Omega_2$, which will be
denoted by $\Omega = \Omega_1 \dot{+} \Omega_2$, if $\langle \Omega_1
\rangle \cap \langle \Omega_2 \rangle =\emptyset$ and $\Omega$ is
the union of all open line segments joining points in $\Omega_1$
to points in $\Omega_2$. We say that $\Omega$ is \emph{decomposable} if $\Omega$ has such a decomposition. Otherwise, $\Omega$ is called \emph{indecomposable}.
\item [\rm (ii)]A $k$-dimensional face $F$ of an $n$-dimensional convex domain $\Omega$ is
called \emph{conic} if there exist $n-k$ supporting hyperplanes
$H_1, H_2, \dots,H_{n-k}$ such that
\begin{equation}\label{conic-hyperplanes}
H_1 \gneq H_1 \cap H_2 \gneq \dots \gneq H_1 \cap \dots \cap
H_{n-k} = \langle F \rangle. 
\end{equation}
Especially, a codimension one face is conic.
\item [\rm (iii)] We say that $\Omega$ has an \emph{osculating  ellipsoid} at
$p \in
\partial \Omega$ if there exist a suitable affine chart and a basis
such that the local boundary equation on some neighborhood of
$p=(0,\dots,0)$ is expressed by $x_n = f(x_1,\dots, x_{n-1})$ and
\begin{equation*} \lim_{(x_{1},\dots,x_{n-1})\to 0}
\frac{f(x_{1},\dots,x_{n-1})}{{x_1}^2+ \dots +{x_{n-1}}^2}=1.
\end{equation*}
\end{enumerate}
\end{Def}
\begin{remark}\label{Ben-oscullating}
\begin{enumerate}
    \item [\rm (i)]
If $\partial \Omega$ is twice differentiable on a neighborhood of a strictly convex boundary point $p$ of $\Omega$, then $\Omega$ has an osculating  ellipsoid at
$p$. See \cite{Jo}.
	\item [\rm (ii)] We will see in section \ref{Ben-Result} that when $\Omega$ is quasi-homogeneous, the property that a face $F$ is conic is equivalent to the property that $F$ is a convex summand of $\Omega$. See  Theorem \ref{section-benz}.
\end{enumerate}
\end{remark}

Let  $\Aut_{\text{proj}}(\Omega)$ be the set of all projective transformations which preserves a  domain $\Omega \subset \RP^n$. Since there is a natural surjection $\pi : \text{SL}^{\pm}(n+1,\br) \rightarrow \PGL(n+1,\br)$  with its kernel $\{\text{Id}\}$ or $\{\text{Id}, -\text{Id}\}$, where  $\text{SL}^{\pm}(n+1,\mathbb R)$ is the group of linear transformations of $\mathbb R^{n+1}$ with determinant $1$ or $-1$,  we will regard $\Aut_{\text{proj}}(\Omega)$ as a subgroup of the group $\text{SL}^{\pm}(n+1,\mathbb R)$ from now on. Note that if $\Omega$ is a  domain in $\mathbb R^n$ then we see that $\Aut(\Omega)$ is a subgroup of $\Aut_{\text{proj}}(\Omega)$ and
$\Aut(\Omega)=\Aut_{\text{proj}}(\Omega)\cap \text{Aff}(n,\br).$

It is well-known that  if $\Omega$ is properly convex, then there is a
complete continuous metric which is invariant under the action of
$\Aut_{\text{proj}}(\Omega)$. This metric is called the Hilbert metric and
defined as follows: Any properly convex projective domain is projectively equivalent to a bounded convex domain in an affine space, we may assume that $\Omega$ is a bounded convex domain in $\mathbb R^n$.
\begin{Def}
Let $\Omega$ be a bounded convex domain in $\mathbb R^n$.
For any two different points $p_1, p_2 \in \Omega$, we define
$\mbox{d}_{\Omega}(p_1, p_2)$ to be the logarithm of the absolute value
of the cross ratio of $(s_1, s_2, p_1, p_2)$, 
 where $s_1$ and
$s_2$ are the points in which the line $\overleftrightarrow {p_1p_2}$
intersects $\partial \Omega$. That is, if the four points  are in a sequence  $s_1, p_1, p_2, s_2$ then 
$$\mbox{d}_{\Omega}(p_1, p_2)=\ln \frac{\mbox{d}_{\mathbb R^n}(s_1, p_2)\mbox{d}_{\mathbb R^n}(p_1, s_2)}{\mbox{d}_{\mathbb R^n}(s_1, p_1)\mbox{d}_{\mathbb R^n}(p_2, s_2)}.$$
For $p_1 = p_2$, we define
$\mbox{d}_{\Omega}(p_1, p_2) =0$.
\end{Def}

So when $\Omega \subset \RP^n$ is properly convex, any element $\gamma$ of $\Aut_{\text{proj}}(\Omega)$ is an isometry of the Hilbert metric. We say $\gamma$ is $elliptic$ if it fixes a point in $\Omega$. If $\gamma$ acts freely on $\Omega$ it is called $parabolic$ if every eigenvalue has modulus 1 and $hyperbolic$
 otherwise. The $translation$ $length$ of $\gamma$,
$$t(\gamma)=\text{inf}_{x\in\Omega}\quad \mbox{d}_{\Omega}(x,\gamma(x)),$$
 is equal to  the logarithm of the absolute value of the ratio of the eigenvalues of $\gamma$ of maximum modulus $\lambda$ and minimum modulus $\mu$, i.e., $$t(\gamma)=\text{ln}|\lambda/\mu|.$$

Let $p$ be a boundary point of $\Omega$ and $H$ a supporting hyperplane to $\Omega$ at $p$. Define  $S_0$ to be the subset of boundary of $\Omega$ obtained by deleting $p$ and all the line segments in $\partial\Omega$ with end points at $p$.  A $generalized$ $horosphere$ $centered$ $on$ $(H,p)$ is the image of $S_0$ under the action of $G(H, p)$, the set of all the projective transformations  which translate towards $p$ and fix  $H$ pointwise, which is equal to the set of all the affine translations towards $p$ in the affine space $\RP^n\setminus H$.
 We simply call a generalized horosphere a $horosphere$ in $\Omega$.
Then $\Omega$ is foliated by horospheres and this foliation is preserved by an automorphism $A\in \Aut_{\text{proj}}(\Omega)$   if $A$ preserves $p$ and $H$. In particular, each horosphere centered on $(H,p)$ is preserved if $A$ is a parabolic isometry fixing $(H,p)$. (See \cite{CLT} or \cite{Marquis} 
 for a reference.)

The following  figures show two kinds of horospheres of the triangle under the action of a hyperbolic isometry $A \in \PGL(3,\br)$ with eigenvalues $2, 2, 1/4$. Figure \ref{HS} shows the horospheres in $\RP^2$ and
Figure \ref{affineHS} shows them in the affine space which is the complement of the supporting hyperplane  $ H$.
\begin{figure}[hbt]
\begin{center}
\scalebox{1.0}{\includegraphics{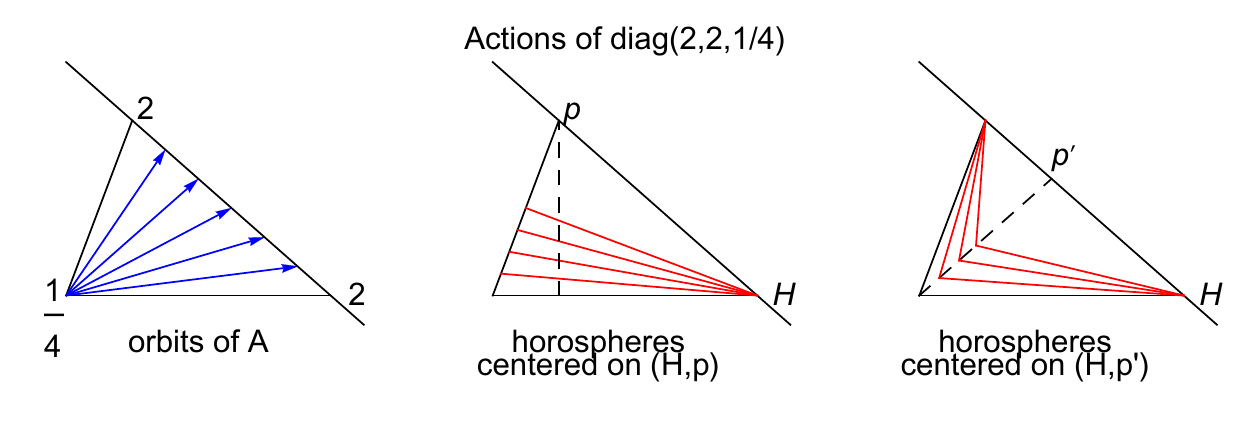}}
\end{center}
\caption{Horospheres of a triangle in $\RP^2$ }\label{HS}
\end{figure}

\begin{figure}[hbt]
\begin{center}
\scalebox{1.0}{\includegraphics{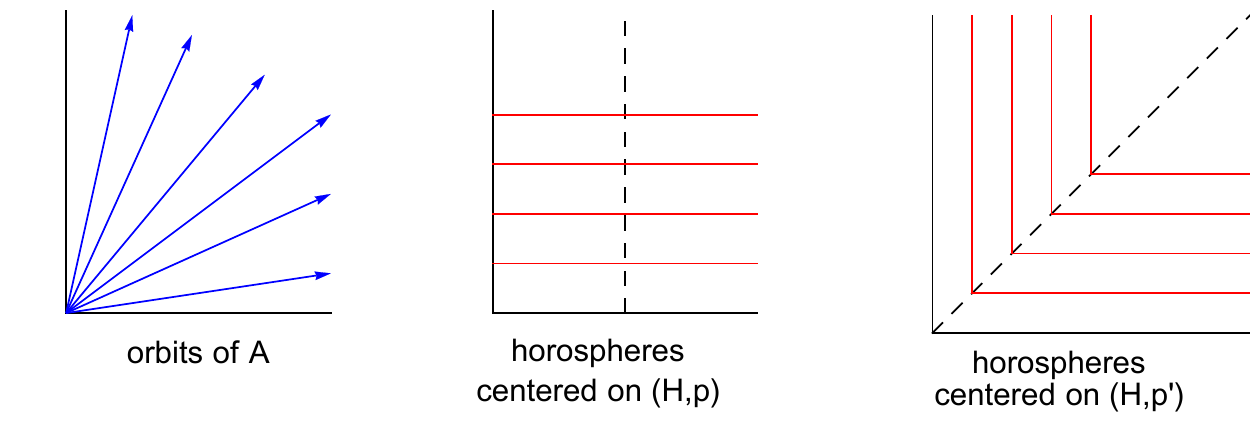}}
\end{center}
\caption{Horospheres of a triangle in the affine space $\RP^2\setminus H$  }\label{affineHS}
\end{figure}

The next theorem  will be used later.
\begin{theorem}[Jo, \cite{Jo4}]\label{hess-noncompt} Let $\Omega$ be a domain in $\mathbb {RP}^n$.
 Then $\Omega$ is an ellipsoid if and only if $\Omega$ has a locally strictly convex point $p$ in the boundary (that is, there exists a connected open neighborhood $U$ of $p$ such that
$U\cap \Omega$ is a strictly convex domain) such that
\begin{enumerate}
\item[\rm(i)] $\partial \Omega$ is $C^2$ near $p$,
\item[\rm(ii)] the Hessian is non-degenerate at $p$,
\item[\rm(iii)] $\Aut_{proj}(\Omega)x$ accumulates at $p$ for some $x\in \Omega$.
\end{enumerate}
\end{theorem}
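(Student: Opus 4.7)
The ``only if'' direction is routine: an ellipsoid is $C^\infty$ with non-degenerate Hessian at every boundary point, and $\text{PO}(n,1)$ acts transitively on its interior, so all three conditions hold at every boundary point of an ellipsoid. The content is the ``if'' direction. The plan is a projective blow-up argument: use the second-order information at $p$ to identify the osculating ellipsoid, then use the accumulating orbit to rescale $\Omega$ near $p$ projectively and identify $\Omega$ globally with that ellipsoid.

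By conditions (i) and (ii) together with Remark \ref{Ben-oscullating}(i), $\Omega$ has an osculating ellipsoid at $p$. First I would choose an affine chart so that $p=0$, the tangent hyperplane $T_p\partial\Omega$ is $\{x_n=0\}$, and after a linear normalization the boundary of $\Omega$ near $p$ is the graph $x_n = |x'|^2 + o(|x'|^2)$; in this chart the osculating ellipsoid corresponds to the paraboloid $P=\{x_n > |x'|^2\}$. Next, using (iii), fix $g_i\in \Aut_{\text{proj}}(\Omega)$ with $g_i(x)\to p$, and introduce the affine dilations $D_\lambda(x',x_n)=(\lambda x', \lambda^2 x_n)$, which preserve $P$. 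Choosing $\lambda_i\to 0$ adapted to the height and tangential position of $g_i(x)$, I would arrange $D_{\lambda_i}^{-1}(g_i(x))$ to stay in a fixed compact subset of $P$ and converge to some $y_0$ in the interior of $P$. Setting $h_i = D_{\lambda_i}^{-1}\circ g_i \in \PGL(n+1,\br)$, we have $h_i(x)\to y_0$.

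A direct Taylor estimate shows that under $D_{\lambda_i}^{-1}$ the local graph $x_n=|x'|^2+o(|x'|^2)$ rescales to $x_n=|x'|^2+\lambda_i^{-2}o(|\lambda_i x'|^2)$, which tends locally uniformly to $x_n=|x'|^2$. Combined with proper convexity this yields Hausdorff convergence $h_i(\overline\Omega)=D_{\lambda_i}^{-1}(\overline\Omega)\to\overline P$ on compact subsets of the chart. Pass to a subsequence so that $h_i\to h$ in $\PM(n+1,\br)$. If $h$ were singular, $R(h)$ would be a proper projective subspace, and since $x\notin K(h)$ (because $h_i(x)\to y_0$, an interior point of $P$), the images would be forced into $R(h)$, contradicting the full-dimensional limit $\overline P$. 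Therefore $h\in\PGL(n+1,\br)$, $h(\Omega)=P$, and $\Omega$ is projectively equivalent to the ellipsoid $P$.

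The main obstacle is the choice of the scaling $\lambda_i$: one must verify that the orbit $g_i(x)$ approaches $p$ in a direction compatible with the parabolic $(\lambda,\lambda^2)$-rescaling, so that $y_0$ can be forced into the interior of $P$ rather than onto $\partial P$. This is exactly where the non-degeneracy of the Hessian in (ii) is essential, since otherwise no isotropic $(\lambda,\lambda^2)$-scaling would match the local shape and the blow-up limit need not be an ellipsoid. Once this normalization is secured, the rigidity that a singular projective limit cannot map a properly convex domain onto a full-dimensional one closes the argument.
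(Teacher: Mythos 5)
The paper does not prove this theorem; it is imported verbatim from \cite{Jo4}, so your proposal can only be measured against the known Benz\'ecri-type arguments. Your overall strategy --- blow up $\Omega$ at $p$ onto the osculating paraboloid $P$, then use the accumulating orbit to promote the degeneration to a genuine element of $\PGL(n+1,\br)$ --- is the right one, but two steps have genuine gaps. The first is the nonsingularity of $h=\lim h_i$. The implication ``$h$ singular $\Rightarrow$ the images are forced into $R(h)$ $\Rightarrow$ the limit of $h_i(\Omega)$ cannot be full-dimensional'' is false: take $\Omega$ a disk and $h_i$ a divergent sequence of hyperbolic isometries of it; then $h_i(\Omega)=\Omega$ has a full-dimensional limit while $h_i$ converges in $\PM(3,\br)$ to a singular $h$ whose range is a single boundary point. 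Only points of $\overline\Omega$ staying away from $K(h)$ have their images forced into $R(h)$; points accumulating on $K(h)\cap\overline\Omega$ can fill out a full-dimensional limit. What rules this out in your setting is the marked point --- $h_i(x)\to y_0$ with $y_0$ an \emph{interior} point of $P$ --- combined with Benz\'ecri's theorem that $\PGL(n+1,\br)$ acts properly on the space of pairs (properly convex domain, interior point). That properness is a substantive theorem (proved via a $\PGL$-equivariant assignment of a quadratic form to each marked domain), not a one-line consequence of ``$x\notin K(h)$''; as written your contradiction is not established. (Also, $h_i(x)\to y_0$ does not by itself give $x\notin K(h)$; the implication goes the other way.)

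The second gap is the one you flag yourself, and it is not a loose end but the technical heart of \cite{Jo4}: arranging $y_0\in P$ rather than $y_0\in\partial P$. Since $\partial\Omega$ agrees with the paraboloid only to second order, the orbit $g_i(x)=(a_i',b_i)$ may approach $p$ tangentially, and one can have $b_i-|a_i'|^2<0$ even though $b_i-f(a_i')>0$; then for \emph{every} choice of $\lambda_i$ the isotropically rescaled point $D_{\lambda_i}^{-1}g_i(x)$ converges to $\partial P$ or leaves $\overline P$ altogether. Repairing this requires renormalizing by the full parabolic subgroup of $\Aut_{\text{proj}}(P)$ (horospherical translations, not just the dilations $D_\lambda$) and controlling the $o(|x'|^2)$ error under those non-isotropic maps; this is precisely where hypotheses (i)--(iii) interact and where the real work lies. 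Finally, a smaller point: the theorem is stated for an arbitrary domain in $\RP^n$ that is only \emph{locally} strictly convex at $p$, while your argument assumes global proper convexity throughout; for the applications in this paper that is harmless, but it means you are proving a strictly weaker statement than the one quoted.
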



\section{Quasi-homogeneous domains}
A domain $\Omega$ is called $ homogeneous$ if $\Aut(\Omega)$ acts
transitively on $\Omega$ and $quasi$-$homogeneous$ if there exists a compact set $K\subset \Omega$ and
$G\subset \Aut(\Omega)$ so that $GK=\Omega$. In this case we also say that $G$ acts on $\Omega$ $syndetically$.
 $\Omega$ is called  $ divisible$ if there
exists a discrete subgroup $\Gamma\subset \Aut(\Omega)$ so that
$\Omega/\Gamma$ is a compact manifold.

Note that both homogeneous and divisible domains are quasi-homogeneous and any compact convex affine $n$-manifold $M$ has a divisible domain $\Omega$ in  $\br^n$ and a discrete subgroup $\Gamma$ of  $\text{Aff}(n,\br)$ acting on $\Omega$ such that $M= \Omega/\Gamma$. Furthermore the following surprising theorem is  well-known.
\begin{theorem}[Vey, \cite{V3}]\label{VeyThm}
A divisible properly convex affine domain is a cone.
\end{theorem}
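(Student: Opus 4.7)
The plan is to locate an apex $p \in \partial\Omega$ from which $\Omega$ consists of rays, making $\Omega$ a cone with vertex $p$. First I rule out the bounded case: a bounded convex $\Omega$ has an affinely defined centroid fixed by every element of $\Aut(\Omega)$, so after translation $\Aut(\Omega) \subset \GL(n,\br)$ preserves a bounded set and hence lies in a compact subgroup of $\GL(n,\br)$; then the discrete group $\Gamma$ would be finite, incompatible with $\Omega/\Gamma$ being a positive-dimensional compact manifold. Consequently $F_\infty := \bar\Omega \cap H_\infty$ is a nonempty closed face of $\bar\Omega \subset \RP^n$, invariant under $\Gamma$ because $\Gamma \subset \Aff(n,\br)$ stabilizes the hyperplane at infinity.

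Next I pass to the linear picture on the cone over $\Omega$. Via the standard embedding $\Aff(n,\br) \hookrightarrow \GL(n+1,\br)$, the group $\Gamma$ lifts to a linear group $\tilde\Gamma$ preserving the open properly convex cone $C_\Omega := \br_{>0} \cdot (\Omega \times \{1\}) \subset \br^{n+1}$, and $\br_{>0} \times \tilde\Gamma$ acts cocompactly on $C_\Omega$, so $C_\Omega$ is a divisible linear cone. Koszul's characteristic function $\phi(x) = \int_{C_\Omega^*} e^{-\langle x,y\rangle}\,dy$ is homogeneous of degree $-(n+1)$ and $\tilde\Gamma$-equivariant up to a Jacobian factor, and its log-Hessian defines a complete $\tilde\Gamma$-invariant Riemannian metric on $C_\Omega$, hence a metric of bounded geometry on $\Omega/\Gamma$.

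The crucial step is then to produce, via cocompactness and this invariant convex structure, an element $\gamma_0 \in \Gamma$ (or in its algebraic hull) whose linear part is a pure positive homothety in $\GL(n,\br)$, and to show that its unique affine fixed point $p$ is fixed by all of $\Gamma$ --- otherwise distinct $\Gamma$-conjugates would produce infinitely many candidate apices, contradicting cocompactness of the Hilbert isometric action. Once $p$ is in hand, for any $x \in \Omega$ the iterates $\gamma_0^n(x) = p + \lambda^n(x-p)$ all lie in $\Omega$, so by convexity the whole open ray from $p$ through $x$ is contained in $\Omega$, and therefore $\Omega$ is the cone with apex $p$ over $F_\infty$. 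The main obstacle is precisely producing the homothety $\gamma_0$ and the common fixed point; this is where both the affine (not merely projective) character of $\Gamma$ and the cocompactness of its action are used essentially, while the other steps are formal consequences of convexity and standard Koszul--Vinberg theory.
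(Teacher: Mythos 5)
The paper offers no proof of this statement; it is quoted directly as Vey's theorem from \cite{V3}, so your attempt can only be judged on its own terms. Your preliminary reductions are correct: excluding the bounded case via the centroid, the $\Gamma$-invariance of the face at infinity, and the passage to the linear cone over $\Omega$ with Koszul's characteristic function are all standard. But there is a genuine gap exactly where you announce ``the crucial step'': you never produce the element $\gamma_0$ whose linear part is a homothety, nor the common fixed point $p$, and this step is essentially equivalent to the theorem itself --- once one has a $\Gamma$-fixed boundary point together with a homothety centered there preserving $\Omega$, the conclusion is the two-line convexity argument you give. Nothing you set up forces such an element to exist (the Koszul metric and cocompactness are invoked but never actually used), and your fallback argument for the common fixed point (``distinct $\Gamma$-conjugates would produce infinitely many candidate apices, contradicting cocompactness of the Hilbert isometric action'') fails as stated: for divisible strictly convex \emph{projective} domains, the fixed points of hyperbolic elements are dense in the boundary with no conflict with cocompactness, so whatever rules this out must exploit the affine hypothesis in a way you have not articulated. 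Note also that $\Gamma$ itself need not contain any homothety (a generic lattice in the diagonal group dividing the positive quadrant contains none), so the passage to the algebraic hull is unavoidable, and the hull need not preserve $\Omega$, which creates a further difficulty for your final ray argument.

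For comparison, Vey's actual route --- which this paper follows in spirit elsewhere, see Proposition \ref{bdd-face} and Theorem \ref{thm-folliation} --- is through the asymptotic cone: $\AC(\Omega)$ is a nonempty, properly convex, closed cone invariant under the linear parts of $\Aut(\Omega)$; a quasi-homogeneous $\Omega$ is foliated by cosets of $\AC^{\circ}(\Omega)$ with the extreme points as cone points; and divisibility forces $\dim \AC(\Omega)=\dim\Omega$, whence $\Omega$ is a translate of $\AC^{\circ}(\Omega)$, i.e.\ a cone. If you want to complete your argument, proving full-dimensionality of the asymptotic cone is a much more tractable target than manufacturing a homothety with a $\Gamma$-invariant apex.
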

We list out some useful facts.
\begin{Prop}[Vey, \cite{V3}]\label{Vey}
Let $\Omega$ be a quasi-homogeneous properly convex affine domain. Then
\begin{enumerate}
\item [\rm (i)] For any $x\in\Omega$ and extreme point $\xi$, there exist $g_i\in G$ such that $g_ix\rightarrow \xi$.
\item [\rm (ii)]
$\Omega=CH(Gx) $ for any $x\in\Omega,$
\item [\rm (iii)] If $L$ is a $G$-invariant proper affine subspace of $\br^n$, then
$$L\cap\Omega=\emptyset \text{ and } L\cap\partial\Omega\neq\emptyset.$$
\end{enumerate}
\end{Prop}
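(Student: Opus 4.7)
My plan is to work with $\Omega$ projectively normalized as a bounded, properly convex subset of $\br^n$ carrying its Hilbert metric $\mbox{d}_{\Omega}$, on which $G$ acts by isometries. Fix a compact $K\subset\Omega$ with $GK=\Omega$ and a basepoint $x\in\Omega$; set $R:=\sup_{k\in K}\mbox{d}_{\Omega}(x,k)$, which is finite by compactness of $K$. For \emph{(i)}, given an extreme point $\xi$ of $\overline{\Omega}$, pick $y_n\in\Omega$ with $y_n\to\xi$ and use quasi-homogeneity to write $y_n=g_n k_n$ with $g_n\in G$, $k_n\in K$. Since $G$ acts by Hilbert isometries, $\mbox{d}_{\Omega}(g_n x,y_n)=\mbox{d}_{\Omega}(x,k_n)\le R$. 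The geometric input I invoke is the standard fact that Hilbert balls of fixed radius centered at a sequence converging to an extreme point shrink in Euclidean diameter: a persistent line segment in such a ball would force a boundary segment through $\xi$, contradicting extremality. Hence $g_n x\to\xi$.

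\medskip
For \emph{(ii)}, the inclusion $CH(Gx)\subset\Omega$ is immediate from convexity and $G$-invariance of $\Omega$. Conversely, \emph{(i)} places every extreme point of $\overline{\Omega}$ in $\overline{Gx}$; since a compact convex set in finite dimensions is the closed convex hull of its extreme points (Minkowski / Krein--Milman), $\overline{\Omega}=\overline{CH(Gx)}$, and passing to interiors yields $\Omega=CH(Gx)$.

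\medskip
For \emph{(iii)}, the first claim is an immediate consequence of \emph{(ii)}: if $x\in L\cap\Omega$ then $Gx\subset L$, so $\Omega=CH(Gx)\subset L$, contradicting properness of $L$. For the second claim, $L\cap\partial\Omega\ne\emptyset$, I pass to the projective completion. Since $G\subset\Aff(n,\br)$ preserves both the hyperplane at infinity $\RP^{n-1}_{\infty}$ and the affine subspace $L$, it preserves the projective closure $\overline{L}\subset\RP^n$; both $\overline{\Omega}$ and $\overline{L}$ are then compact $G$-invariant subsets of $\RP^n$. It is enough to prove $\overline{L}\cap\overline{\Omega}\ne\emptyset$ in $\RP^n$: any intersection point lies in $\br^n$ (as $\overline{\Omega}$ is bounded), hence in $L\cap\overline{\Omega}$, which by the first claim must sit in $L\cap\partial\Omega$.

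\medskip
Forcing these two projective sets to meet is the main obstacle. My strategy is a singular-limit argument: $G$ is non-compact (otherwise $GK$, and so $\Omega$, would be compact, contradicting openness), so one can choose $g_n\in G$ with $g_n x\to\xi\in\partial\Omega$ via \emph{(i)} and pass to a subsequence converging to some $g\in\PM(n+1,\br)$ with projectivized kernel $K(g)$ and range $R(g)$. Since $\xi=\lim g_n x\in R(g)\cap\overline{\Omega}$, the range already meets $\overline{\Omega}$. Selecting $p\in L\setminus K(g)$ gives $g_n p\to g(p)\in R(g)\cap\overline{L}$. The delicate point, where I expect the bulk of the technical effort, is to organize the dynamics so that the contracting behavior of $g_n$ near $\xi$ forces $R(g)\subset\overline{L}$ (or at least produces a common point with $\overline{\Omega}$), and to handle the degenerate case in which every natural sequence yields $L\subset K(g)$ by re-selecting within $G$ using the $G$-invariance of $L$.
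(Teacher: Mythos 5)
The paper itself offers no proof of Proposition \ref{Vey}; it is quoted from Vey \cite{V3}, so there is nothing internal to compare against and your proposal has to stand on its own. Parts (i), (ii) and the first assertion of (iii) do stand: the fact that Hilbert balls of bounded radius centred at points converging to an extreme point shrink is correct (a bounded cross-ratio along the chords through $y_n$ and $g_nx$ would otherwise produce an open boundary segment having $\xi$ in its relative interior, contradicting that $\xi$ is a zero-dimensional face); Minkowski's theorem then gives $\overline{CH(Gx)}=\overline\Omega$, and since $CH(Gx)$ is a convex subset of the open set $\Omega$ with the same closure, taking relative interiors gives $CH(Gx)=\Omega$; and $L\cap\Omega=\emptyset$ follows at once from (ii).

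The genuine gap is in the second assertion of (iii), $L\cap\partial\Omega\neq\emptyset$, and it is twofold. First, your reduction is flawed: you claim that any point of $\overline L\cap\overline\Omega$ in $\RP^n$ lies in $\br^n$ ``as $\overline\Omega$ is bounded'', but a quasi-homogeneous properly convex affine domain is \emph{never} bounded (the paper itself notes that no bounded convex domain is quasi-homogeneous, cf.\ Proposition \ref{bdd-face} and the discussion of asymptotic cones), so the projective closure $\overline\Omega$ always meets $\RP^{n-1}_{\infty}$, and a priori $\overline L\cap\overline\Omega$ could be contained entirely in $\partial_\infty\Omega$, which yields no point of $L\cap\partial\Omega$. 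Renormalizing $\Omega$ to a bounded domain does not repair this: in that chart $G$ is no longer affine, $L$ is only part of the projective subspace $\overline L$, and the escape-to-infinity problem reappears as the possibility that the intersection lies in $\overline L\setminus L$. Second, and more seriously, the dynamical step that is supposed to force the range $R(g)$ of a singular limit to meet $\overline L$ and $\overline\Omega$ in a common \emph{finite} point is precisely the content of the claim, and you explicitly defer it as ``the bulk of the technical effort''; as written nothing excludes the degenerate configurations you yourself list ($L\subset K(g)$ for every limit you construct, or $R(g)\cap\overline L$ landing at infinity). So this part of the proposition remains unproved in your proposal.
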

Here $CH(Gx) $ means the convex hull of $Gx$.

\begin{Prop}[Jo, \cite{Jo, Jo5}]\label{bdd-face}
A quasi-homogeneous convex affine domain cannot have any bounded face with non-zero dimension.
\end{Prop}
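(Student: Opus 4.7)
Suppose for contradiction that $\Omega\subset\br^n$ is a quasi-homogeneous convex affine domain admitting a bounded face $F$ of dimension $k\geq 1$. A first reduction is that $\Omega$ must be properly convex and unbounded: if $\Omega$ contained a complete line, its lineality subspace $L$ would force every face of $\overline\Omega$ to contain a translate of $L$, so no positive-dimensional face could be bounded; if $\Omega$ were bounded then $\Aut(\Omega)$ would fix the barycenter of $\overline\Omega$ and hence lie in a compact subgroup of $\Aff(n,\br)$, contradicting the syndeticity $GK=\Omega$ on an open set.

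Since $\overline{F}$ is a compact convex set of dimension $k\geq 1$, it admits at least two extreme points, and each such point is automatically an extreme point of $\overline\Omega$: if $\xi\in\overline{F}$ lay in the interior of a segment $[a,b]\subset\overline\Omega$, then $a,b$ would belong to the face of $\xi$, i.e., to $\overline{F}$, contradicting the extremality of $\xi$ in $\overline{F}$. Fix such an extreme point $\xi\in\overline{F}$. By Proposition~\ref{Vey}(i) there exist $x\in\Omega$ and a sequence $g_i=(A_i,b_i)\in G$ with $g_i(x)\to\xi$. Were the linear parts $A_i$ bounded, a convergent subsequence would yield a limit $g_\infty\in\Aut(\Omega)$ sending $x\in\Omega$ to $\xi\in\partial\Omega$, which is impossible. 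So $A_i$ is unbounded and, after normalization, $g_i$ converges in the projective matrix compactification $\PM(n+1,\br)$ to a singular limit $\tilde{g}$.

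Each image $g_i(\overline{F})$ is a bounded $k$-dimensional face with barycenter $g_i(c)$, where $c$ is the barycenter of $\overline{F}$. The plan is to exploit the compact affine symmetry group of $\overline{F}$: if a subsequence $g_{i_j}$ stabilizes $\overline{F}$ setwise, then the restrictions $g_{i_j}|_{\overline{F}}$ lie in the compact group of affine automorphisms of a bounded convex set (which fixes the barycenter and is contained in the orthogonal group of the John ellipsoid of $\overline{F}$); this equicontinuous action on $\overline{F}$ would be incompatible with the strongly contracting action of $g_{i_j}$ in the directions transverse to $\langle F\rangle$ forced by $g_{i_j}(x)\to\xi$ and the form of $\tilde{g}$, yielding the desired contradiction.

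The principal obstacle is the extraction of such a stabilizing subsequence, since a priori infinitely many distinct bounded $k$-dimensional faces of $\Omega$ could be incident to $\xi$. To overcome this I would apply Proposition~\ref{Vey}(iii) to the smallest $G$-invariant affine subspace $V$ spanned by the $G$-orbit of $\overline{F}$: if $V$ is proper, then $V\cap\Omega=\emptyset$, which constrains the family $\{g_i(\overline{F})\}$ to lie in a common proper affine subspace and enables an inductive reduction on dimension; if $V=\br^n$, then the orbit of $\overline{F}$ affinely spans $\br^n$, and the accumulation of bounded $k$-dimensional faces at the extreme point $\xi$ can be shown incompatible with the strict extremality of $\xi$ via a Benoist-style analysis of $\tilde{g}$ together with Proposition~\ref{Vey}(ii). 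This reconciliation, combining quasi-homogeneity with the projective degeneration of $g_i$, is where I expect the bulk of the technical work to reside.
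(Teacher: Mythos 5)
First, note that the paper does not prove this proposition at all: it is imported from Jo's earlier work \cite{Jo, Jo5}, so there is no in-paper argument to compare yours against. Judged on its own terms, your proposal has a genuine gap: it is a plan rather than a proof. Your reductions (a complete line makes every face unbounded; a bounded $\Omega$ forces $\Aut(\Omega)$ into a compact group via the barycenter and John ellipsoid) and the observation that extreme points of $\overline F$ are extreme points of $\overline\Omega$ are fine, modulo the minor slip that bounded linear parts $A_i$ can still converge to a singular matrix (the correct reason the limit in $\PM(n+1,\br)$ is singular is that $\Aut(\Omega)$ is closed and a nonsingular limit would send $x\in\Omega$ into $\Omega$, not to $\partial\Omega$). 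But the actual contradiction is never derived: you explicitly defer the two load-bearing steps --- extracting a subsequence of $\{g_i\}$ that stabilizes $\overline F$ setwise, and the ``Benoist-style analysis'' when the orbit of $\overline F$ spans $\br^n$ --- and neither is routine. There is no reason a stabilizing subsequence should exist (generically the $g_i(\overline F)$ are pairwise distinct faces accumulating at $\xi$), and the appeal to Proposition \ref{Vey}(iii) sets up no usable induction, since $V\cap\overline\Omega$ is not known to be the closure of a quasi-homogeneous domain of lower dimension.

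There is also a strategic misstep that makes the gap harder to close: you degenerate toward an \emph{extreme point} $\xi$ of $\overline F$. By Lemma \ref{singular}(iii) the range of the singular limit is then the support of the face containing $\xi$, i.e.\ the single point $\{\xi\}$, so the limit dynamics collapse everything to $\xi$ (in all directions, not only those transverse to $\langle F\rangle$) and retain no trace of the hypothesis $\dim F\geq 1$. The efficient route with the tools already assembled in this paper is to take $\xi$ in the \emph{relative interior} of $F$ (Lemma \ref{lem-saillant}), so that Lemma \ref{singular}(iii)--(iv) give $R(\tilde g)=\langle F\rangle$ and $\tilde g(\Omega)=F$; since $F$ is bounded, $R(\tilde g)\cap\br^n\neq\emptyset$, whence $K(\tilde g)$ lies in the hyperplane at infinity by Lemma \ref{kernel}, and $\tilde g$ restricts to a degenerate affine surjection of $\br^n$ onto $\langle F\rangle\cap\br^n$ carrying all of $\Omega$ onto $F$. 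A Benz\'ecri-type limit argument, as in \cite{Jo}, then exhibits $F$ as a bounded quasi-homogeneous convex affine domain of positive dimension --- precisely the object your own barycenter argument already rules out. Your proposal never reaches a statement of this kind.
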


For a strictly convex quasi-homogeneous projective domain $\Omega\subset \RP^n$,
the following is proved in \cite{Jo}.
\begin{Prop}\label{strictlyCV} Let $\Omega$ be a strictly convex quasi-homogeneous domain in $\RP^n$. Then 
\begin{enumerate}
\item [\rm (i)]$\partial \Omega$ is at least
$C^1$,
\item [\rm (ii)]$\Omega$ is an ellipsoid if and only if $\partial \Omega$ is
twice differentiable,
\item [\rm (iii)]if $\partial \Omega$ is $C^{\alpha}$ on an open subset of $\partial \Omega$, then
$\partial \Omega$ is $C^{\alpha}$ everywhere,
\item [\rm (iv)] $\Omega$ is an ellipsoid  if and only if a boundary point of $\Omega$ is fixed by a subgroup $G$ of $\Aut_{proj}(\Omega)$ acting on $\Omega$ syndetically.
\end{enumerate}
\end{Prop}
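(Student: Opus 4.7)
The plan is to treat the four parts largely independently, relying on the common toolkit of quasi-homogeneity (Proposition \ref{Vey}), the limit-of-projective-maps machinery of Benoist (if $g_i \to g$ in $\mathrm{PM}(n+1,\br)$, then $g_i(p_i) \to g(p)$ whenever $p_i \to p \notin K(g)$), and the osculating-ellipsoid criterion of Theorem \ref{hess-noncompt}.

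For (i), I argue by contradiction. Suppose $p \in \partial\Omega$ is a non-$C^1$ point, so that the set of supporting hyperplanes at $p$ has positive dimension in the dual. Pick $x_0 \in \Omega$ and, using Proposition \ref{Vey}(i) (every extreme point, which by strict convexity means every boundary point, is an orbit accumulation), choose $g_n \in \Aut_{\mathrm{proj}}(\Omega)$ with $g_n(x_0) \to p$. Pass to a subsequence with limit $g \in \mathrm{PM}(n+1,\br)$. Strict convexity forces $R(g) \subset \overline{\Omega}$ to be contained in a single face, namely $\{p\}$, so $g$ has rank one. Dualizing, the adjoint $g^*$ similarly has rank one and its image lies inside the cone of supporting hyperplanes at $p$; but $g_n^*$ preserves the dual domain, whose non-strictly-convex boundary at $p$ (because $p$ is non-$C^1$ for $\Omega$) together with the rank-one limit produces a segment in $\partial\Omega$, contradicting strict convexity.

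For (iii), I use that every $g_n \in \Aut_{\mathrm{proj}}(\Omega)$ is a projective transformation, hence real-analytic where nonsingular, so it preserves the $C^\alpha$ regularity of $\partial\Omega$. Let $U \subset \partial\Omega$ be the open set on which $\partial\Omega$ is $C^\alpha$ and let $q \in \partial\Omega \setminus U$. Take a compact $K \subset \Omega$ with $GK = \Omega$, choose $y_n \in \Omega$ approaching $q$, and write $y_n = g_n k_n$ with $k_n \in K$; after extracting, $g_n \to g$ and $k_n \to k \in K$. Using a point $p_0 \in U$ (note $U$ is nonempty, and by choosing $K$ large we can assume $p_0$ sits over $K$), one finds a nearby automorphism $h$ sending a $C^\alpha$ neighborhood of $p_0$ onto a neighborhood of $q$ on $\partial\Omega$, so that $\partial\Omega$ is $C^\alpha$ near $q$ as well; this is the standard open-closed argument on the connected set $\partial\Omega$.

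For (ii), only $(\Leftarrow)$ requires work. If $\partial\Omega$ is $C^2$, Remark \ref{Ben-oscullating}(i) gives an osculating ellipsoid at every point, so the Hessian of a local defining function is positive semi-definite. If the Hessian is non-degenerate at a single point $p$, then Theorem \ref{hess-noncompt} applies (the accumulation hypothesis is guaranteed by strict convexity plus Proposition \ref{Vey}(i)), and $\Omega$ is an ellipsoid. To rule out the degenerate-everywhere case, one observes that the kernel of the Hessian would furnish a proper $\Aut_{\mathrm{proj}}(\Omega)$-equivariant distribution on $\partial\Omega$; by quasi-homogeneity its dimension is constant, and propagating this through an osculating ellipsoid at one point produces a boundary line segment, contradicting strict convexity. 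For (iv), $(\Rightarrow)$ is the standard fact that the parabolic subgroup of $\mathrm{PO}(n,1)$ fixing a point at infinity of the Klein model acts simply transitively on the ellipsoid. For $(\Leftarrow)$, assume $GK = \Omega$ and $Gp = \{p\}$ with $p \in \partial\Omega$. Take any $q_n \in \Omega$ with $q_n \to p$ and pick $g_n \in G$ with $g_n(q_n) \in K$; a subsequential limit $g$ has $p \notin K(g)$ (since $g_n$ fixes $p$ and $g_n(p) = p$ stays out of $K \subset \Omega$), and analysis of $g$ produces an osculating ellipsoid at a boundary point, whence by the regularity propagation in (iii) and part (ii), $\Omega$ is an ellipsoid.

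The main obstacle will be the careful bookkeeping with singular projective limits in parts (i) and (iv): one must show that the kernel $K(g)$ and image $R(g)$ of the limit are located where strict convexity prohibits them from producing segments except at the candidate boundary point, and that the dual picture lines up. Ruling out the uniformly-degenerate Hessian in (ii) is also subtle and is the step where strict convexity rather than mere convexity is essential.
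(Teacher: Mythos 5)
First, a point of comparison: the paper itself gives no proof of this proposition --- it is quoted verbatim from \cite{Jo} --- so your proposal has to stand on its own, and it does not. The clearest gap is in (iii). The set $U$ of $C^{\alpha}$ points is indeed open and $\Aut_{\text{proj}}(\Omega)$-invariant, but your propagation mechanism fails: the automorphisms $g_n$ with $g_n(x_0)\to q$ converge to a singular limit $g$ of rank one (strict convexity forces $R(g)=\{q\}$), so $g_n$ maps any fixed neighborhood of $p_0\in U$ in $\partial\Omega$ to open sets that \emph{shrink to the single point} $q$; they never cover a neighborhood of $q$, so no automorphism ``sends a $C^{\alpha}$ neighborhood of $p_0$ onto a neighborhood of $q$.'' The open--closed argument you invoke would require every $G$-orbit in $\partial\Omega$ to be dense (no proper nonempty closed invariant subset of $\partial\Omega$), and that is false in the relevant generality: the affine automorphism group of the parabola $\{y>x^2\}$, a strictly convex quasi-homogeneous domain, fixes the boundary point at infinity. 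The correct argument must exploit the self-similarity furnished by Benz\'{e}cri's compactness --- writing $y_n=g_nk_n$ with $k_n$ in the compact set $K$ and rescaling by $g_n^{-1}$ so that the local boundary geometry at $q$ is compared with the geometry over $K$ --- which is precisely the step your sketch omits.

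Parts (i), (ii) and (iv) have related gaps. In (i), the image of the rank-one adjoint limit is the single dual point $[K(g)]$, which need not be a supporting hyperplane \emph{at} $p$ at all, so the asserted segment in $\partial\Omega$ is not produced. In (ii), a degenerate second fundamental form does not yield a boundary segment (consider $y=x^4$), so ``propagating the kernel distribution'' cannot rule out the everywhere-degenerate case; one needs, e.g., the contact point of a circumscribed ball, where the Hessian is forced to be nondegenerate, before Theorem \ref{hess-noncompt} can be applied. In (iv), the claim $p\notin K(g)$ does not follow from $g_n(p)=p\notin K$ (boundary points may well lie in $K(g)$), and no mechanism is given for extracting an osculating ellipsoid from a fixed boundary point. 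A workable route for (iv) is instead: the supporting hyperplane $H$ at $p$ (unique by (i)) is $G$-invariant, so in the affine chart $\RP^n\setminus H$ the domain is a quasi-homogeneous affine domain whose asymptotic cone is the single ray toward $p$; Theorem \ref{transitive} then makes $G$ act transitively on $\partial\Omega\setminus\{p\}$, which is therefore a smooth homogeneous hypersurface, and (iii) followed by (ii) finishes. As written, only the forward direction of (ii) and the elementary direction of (iv) are sound.
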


Every quasi-homogeneous convex affine domain contains a cone invariant under the action of linear
parts of their automorphism groups, which is called an asymptotic
cone. This terminology was originally introduced by Vey in
\cite{V3}.
\begin{Def}
 Let $\Omega$ be a convex domain in $\mathbb
R^n$. The {\it asymptotic cone} of $\Omega$ is defined as follows:
\begin{equation*}
\text{AC} (\Omega)=\{u\in \mathbb R^n\,|\, x+tu\in \Omega, \mbox{
for all } x\in \Omega, t\geq 0\}.
\end{equation*}
\end{Def}
By the convexity of $\Omega$, for any $x_0 \in \Omega$,
\begin{equation*}
\text{AC} (\Omega)=\text{AC}_{x_0} (\Omega): =\{u\in \mathbb
R^n\,|\, x_{0}+ tu\in \Omega, \mbox{ for all }t\geq 0\}.
\end{equation*}
Note that $\mbox{AC} (\Omega )$ is a properly convex closed cone
in $\mathbb R^n$ if $\Omega $ is properly convex. If we denote
the interior of $\mbox{AC} (\Omega )$ relative to its affine hull
by $\mbox{AC}^\circ (\Omega )$, then it is proved in \cite{Jo1} that $\mbox{AC}^\circ (\Omega )$ is a (quasi)-homogeneous domain if $\Omega$ is (quasi)-homogeneous. 

Even though the asymptotic cone $\text{AC}(\Omega )$ of a properly convex affine domain $\Omega$ is possibly empty, it is nonempty if $\Omega$ is quasi-homogeneous because there is no bounded quasi-homogeneous convex domain (see Proposition \ref{bdd-face}).
Vey proved in \cite{V3} that a quasi-homogeneous properly convex affine domain is itself a cone if the dimension of its asymptotic cone is equal to the dimension of $\Omega$. More generally, we get the following.

\begin{theorem}[Jo, \cite{Jo}]\label{thm-folliation}
Let $\Omega$ be a properly convex quasi-homogeneous affine domain
in $\mathbb R^n$. Then

\begin{enumerate}
\item [\rm (i)]
$\Omega $ admits a parallel foliation by
cosets of the asymptotic cone $\emph{AC}^\circ(\Omega )$ of $\Omega$, we call this asymptotic foliation of $\Omega$,
\item [\rm (ii)]the set of all the asymptotic cone points of $\Omega$ is equal to the set of all the extreme points of $\Omega$, where an asymptotic cone point is a boundary point $\xi$ such that $\xi+\emph{AC}^\circ(\Omega )$ is a leaf of the above foliation.
\end{enumerate}
\end{theorem}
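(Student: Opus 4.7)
\medskip

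\noindent\textbf{Proof proposal.}
Set $V:=\mathrm{span}\,\AC(\Omega)\subseteq\br^n$. Because the linear parts $L(G)$ of $G:=\Aut(\Omega)$ permute the unbounded directions of $\Omega$, the cone $\AC(\Omega)$ is $L(G)$-invariant, and therefore so is $V$. The candidate foliation I propose for (i) is the one whose leaves are the fibers of the affine projection $\pi:\br^n\to\br^n/V$ restricted to $\Omega$; equivalently, the leaf through $x\in\Omega$ is $L_x:=\Omega\cap(x+V)$. Each $L_x$ is a nonempty open convex subset of $x+V$, and a short check using proper convexity of $\Omega$ shows that its intrinsic asymptotic cone equals $\AC(\Omega)$ itself (containment $\supseteq$ is immediate; for $\subseteq$, a direction $u\in V$ with $x+tu\in\Omega$ for all $t\ge0$ is automatically in $\AC(\Omega)$). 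The core of (i) is to upgrade this to $L_x=p_x+\AC^\circ(\Omega)$ for a unique $p_x\in(x+V)\setminus\Omega$, i.e. each slice is genuinely a translate of $\AC^\circ(\Omega)$ and not merely an open convex set with that asymptotic cone.

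For that upgrade I would use quasi-homogeneity twice. First, by Proposition~\ref{Vey}(i), the $G$-orbit of any $y\in\Omega$ accumulates on every extreme point, which ensures a rich supply of affine transformations preserving the $V$-direction. Second, let $G_x\subseteq G$ be the stabilizer of the affine slice $x+V$; then $G_x$ acts affinely on $x+V$ preserving $L_x$, and from the $G$-syndetic structure of $\Omega$ together with the $L(G)$-equivariance of the fibration $\pi$ one extracts a compact set $K_x\subseteq L_x$ with $G_x\cdot K_x=L_x$. In other words, $L_x$ is a properly convex quasi-homogeneous affine domain in $x+V$ whose asymptotic cone $\AC(\Omega)$ is of full dimension in $V$. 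Applying the quasi-homogeneous version of Vey's cone theorem (the extension of Theorem~\ref{VeyThm} in \cite{V3}) to $L_x$ forces $L_x$ to be an affine cone, hence a translate of $\AC^\circ(\Omega)$; uniqueness of the base point $p_x$ follows from pointedness of $\AC(\Omega)$, which is in turn a consequence of proper convexity of $\Omega$.

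For (ii), one direction is nearly formal. Suppose $\xi+\AC^\circ(\Omega)$ is a leaf in $\Omega$, so $\xi\in\partial\Omega$. If $\xi$ lay in the interior of a nontrivial segment in $\overline{\Omega}$ with direction $u\ne0$, a short convexity argument (considering the leaves on either side of $\xi$ along this segment) would force $\pm u\in\AC(\Omega)$, contradicting pointedness of $\AC(\Omega)$; hence $\{\xi\}$ is a $0$-dimensional face. Conversely, given an extreme point $\xi$, fix $y\in\Omega$ and choose $g_i\in G$ with $g_iy\to\xi$ (Proposition~\ref{Vey}(i)). By affine equivariance of the foliation from (i), the base point of the leaf through $g_iy$ is $p_{g_iy}=g_i(p_y)\in\partial\Omega$. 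Passing to a subsequence, $p_{g_iy}\to p_\infty\in\partial\Omega$, and the leaves converge to $p_\infty+\AC^\circ(\Omega)\subseteq\overline{\Omega}$. Identification of $p_\infty$ with $\xi$ then uses extremality: were $p_\infty\ne\xi$, the point $\xi$ would be interior to the segment from $p_\infty$ to some point on the limiting leaf (which lies in $\overline\Omega$), contradicting that $\xi$ is a $0$-dimensional face. This gives $\xi+\AC^\circ(\Omega)\subseteq\overline{\Omega}$, and once again extremality upgrades this inclusion into $\Omega$.

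The main obstacle is the middle step of (i): producing enough automorphisms of each slice $L_x$ inside the stabilizer $G_x$ of $x+V$ to conclude quasi-homogeneity in $x+V$. Naively $G_x$ could be very small, so one must work equivariantly under $\pi$, using that the foliation transports coherently under $G$ and that syndeticity downstairs ($G$ acts syndetically on $\Omega$) lifts to syndeticity on each fiber. Once this is handled, Vey's cone theorem finishes (i) cleanly and (ii) follows essentially by chasing the base points through the $G$-action.
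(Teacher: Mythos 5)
The paper does not actually prove this statement: Theorem \ref{thm-folliation} is quoted from \cite{Jo} without proof, so your proposal has to stand on its own. Your setup is the right one --- the candidate leaves $L_x=\Omega\cap(x+V)$ with $V=\mathrm{span}\,\AC(\Omega)$, the $L(G)$-invariance of $V$, the computation $\AC(L_x)=\AC(\Omega)$, and the reduction of (i) to showing that each slice is a single translate of $\AC^{\circ}(\Omega)$ are all correct. But the step you rely on to complete that reduction --- that the stabilizer $G_x$ of the slice $x+V$ acts syndetically on $L_x$, so that Vey's quasi-homogeneous cone theorem applies fiberwise --- is asserted rather than proved, and you concede as much in your closing paragraph. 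This is not a routine verification: cocompactness of a group action on the total space of an equivariant fibration does not descend to cocompactness of the fiber stabilizers on the fibers. (Already for $\bz^2$ acting by translations on $\br^2$ and the projection along a line of irrational slope, the stabilizer of every fiber is trivial while the action on $\br^2$ is cocompact.) Nothing in your argument rules out the analogous degeneracy here; a priori $G_x$ could be trivial for every $x$. Whatever fills this hole must use convexity and the structure of $\AC(\Omega)$, not merely equivariance of $\pi$, so the core of (i) remains open in your write-up.

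Part (ii) has softer but real gaps of the same flavor. In the forward direction, your convexity argument does give $\pm u\in\AC(\Omega)$ when the segment direction $u$ lies in $V$ (then $a,b\in\overline{\xi+\AC^{\circ}(\Omega)}=\xi+\AC(\Omega)$ and pointedness finishes it), but when $u\notin V$ the segment $[a,b]$ lies entirely in $\partial\Omega$, there are no ``leaves on either side of $\xi$ along this segment,'' and the claimed conclusion does not follow from what you wrote. In the converse direction the identification $p_\infty=\xi$ is fine, but the final sentence ``extremality upgrades this inclusion into $\Omega$'' is not an argument: you have $\xi+\AC^{\circ}(\Omega)\subseteq\overline{\Omega}$, and showing that this open subset of $\xi+V$ does not collapse into $\partial\Omega$ is essentially the substance of the theorem at that point (compare Corollary \ref{AC-Face}(i), which the paper derives from this theorem together with the transitivity statement of Theorem \ref{transitive}). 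In short, the skeleton is plausible, but the two load-bearing steps --- fiberwise syndeticity in (i) and the passage from $\overline{\Omega}$ to $\Omega$ in (ii) --- are exactly the ones left unproved.
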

Note that if we consider $\Omega$ as a projective domain in $\mathbb {RP}^n$, then the set of all  extreme points of $\Omega$ is the union of the set of all  asymptotic cone points and  {infinite} extreme points of $\Omega$.

\begin{theorem}[Jo, \cite{Jo2}]\label{transitive}
Let $\Omega $ be a properly convex affine domain in
     $\mathbb R^n$ and $G$ be a closed subgroup of  $\Aut(\Omega)$ acting syndetically on $\Omega$. Then $G$
      acts transitively on the set $S(\Omega)$ of all  asymptotic cone points of $\Omega$.
\end{theorem}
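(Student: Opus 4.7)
The plan is to prove transitivity in three stages: first, show $G$ descends to a natural action on $S(\Omega)$ via the $G$-invariance of the asymptotic foliation; second, show every $G$-orbit in $S(\Omega)$ is dense, using Proposition~\ref{Vey}(i); and third, upgrade density to transitivity via a connectedness argument on $\Omega$.

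For the first stage, any $g \in G \subset \Aut(\Omega)$ with linear part $A$ satisfies $A \cdot \AC(\Omega) = \AC(\Omega)$ since $g(\Omega)=\Omega$, and hence $A \cdot \AC^\circ(\Omega) = \AC^\circ(\Omega)$. Therefore $g$ maps each leaf $\xi + \AC^\circ(\Omega)$ of the asymptotic foliation (Theorem~\ref{thm-folliation}) onto the leaf $g(\xi) + \AC^\circ(\Omega)$ with apex $g(\xi)$. The continuous leaf projection $\pi\colon \Omega \to S(\Omega)$, defined by $\pi(x)=\xi$ whenever $x \in \xi + \AC^\circ(\Omega)$, is then $G$-equivariant, so the action descends to $S(\Omega)$.

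For the second stage, fix $\xi_0 \in S(\Omega)$ and pick any $x_0 \in \xi_0 + \AC^\circ(\Omega)$, so that $\pi(x_0)=\xi_0$. For every $\xi \in S(\Omega)$, Proposition~\ref{Vey}(i) produces a sequence $g_n \in G$ with $g_n x_0 \to \xi$. Since $\xi$ is an extreme point, any sequence of interior points converging to $\xi$ must lie on leaves whose apices also converge to $\xi$, so $\pi$ extends continuously to the boundary with $\pi(\xi)=\xi$. Hence $g_n \xi_0 = \pi(g_n x_0) \to \pi(\xi)=\xi$, showing $\overline{G\xi_0}=S(\Omega)$.

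The third stage is the main obstacle. The $G$-orbits on $S(\Omega)$ pull back to a partition of $\Omega$ into $G$-invariant subsets $\pi^{-1}(G\xi) = \bigcup_{\eta \in G\xi}(\eta + \AC^\circ(\Omega))$. Since $\Omega$ is connected, showing each such preimage is open forces a unique orbit and yields transitivity. Openness of $\pi^{-1}(G\xi)$ is equivalent to openness of $G\xi$ in the leaf space, which naturally identifies with the open convex set $\pi(\Omega) \subset \br^n/V$ (where $V = \langle \AC^\circ(\Omega) \rangle$), on which $G$ acts syndetically ($G\pi(K) = \pi(\Omega)$ where $K$ realizes the syndeticity on $\Omega$) and, by the second stage, with dense orbits. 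The heart of the argument is to exclude proper lower-dimensional dense orbits; while this fails for arbitrary Lie group actions (for instance, irrational flows on a torus), the affine-convex setting together with the Hilbert-metric rigidity of the $G$-action on $\Omega$ should prevent such behavior, essentially because a proper lower-dimensional orbit would accumulate on uncountably many disjoint $G$-translates within the compact transversal $\pi(K)$, contradicting either its compactness or the properness of the $G$-action on $\Omega$ in the Hilbert metric. Making this rigidity precise is the main technical challenge of the proof.
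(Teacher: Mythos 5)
The paper does not actually prove this statement; it is imported verbatim from \cite{Jo2} (Theorem \ref{transitive} is a quoted result), so there is no in-paper proof to compare against. Judged on its own terms, your first two stages are sound: the linear parts of $G$ preserve $\AC^\circ(\Omega)$, so $G$ permutes the leaves of the asymptotic foliation and acts on $S(\Omega)$; and the continuity of the apex map at an extreme point $\xi$ does hold (if $x_n\to\xi$ with $x_n\in\xi_n+\AC^\circ(\Omega)$, then $\xi_n\to\xi$, since a finite limit $\eta\neq\xi$ would place $\xi$ in the interior of a segment of $\overline\Omega$, contradicting extremality, while $\xi_n\to\infty$ would force $\AC(\Omega)$ to contain a line, contradicting proper convexity). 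So every $G$-orbit in $S(\Omega)$ is dense.

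The genuine gap is your third stage, and you acknowledge it yourself: passing from dense orbits to transitivity is exactly the content of the theorem, and the heuristic you offer does not close it. Density of an orbit of a closed subgroup of $\Aff(m,\br)$ in an open convex set does not imply openness of that orbit: Effros-type theorems only upgrade \emph{non-meager} orbits to open ones, and a dense orbit can be meager. The two contradictions you invoke do not materialize as stated: a compact transversal $\pi(K)$ can perfectly well contain uncountably many pairwise disjoint accumulating subsets (think of uncountably many parallel segments in a square), so no contradiction with compactness arises; and properness of the $G$-action on $(\Omega,d_\Omega)$ does not descend to the leaf space, where point stabilizers (the subgroups preserving a single leaf $\xi+\AC^\circ(\Omega)$) are typically non-compact, so ``properness'' gives no control on the induced action on $S(\Omega)$. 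Indeed, in the very examples this paper cares about (e.g.\ Proposition \ref{dim4-asy2}, case (ii)), establishing enough of the isotropy subgroup $G_\xi$ to control the orbit structure is a genuinely delicate computation, which is a sign that no soft topological argument of the kind you sketch will suffice. As written, the proposal reduces the theorem to an unproved rigidity claim rather than proving it.
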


\begin{lemma}\label{extreme}
Let $\Omega$ be a properly convex quasi-homogeneous affine
domain in $\br^n$. Then any boundary point which is a limit point of a sequence of extreme points is again an extreme point. Especially, for any $\gamma \in \overline{\Aut(\Omega)}$ and an extreme point $\xi$, $\gamma(\xi)$ is an extreme point if it is in $\br^n$ (it is an infinite extreme point if it is in $\partial_{\infty}\Omega$).
\end{lemma}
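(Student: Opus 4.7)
The plan is to invoke Theorem~\ref{thm-folliation}(ii), which identifies the extreme points of $\Omega$ lying in $\br^n$ with the asymptotic cone points, i.e., boundary points $\xi$ such that $\xi + \AC^\circ(\Omega)$ is a leaf of the parallel asymptotic foliation of $\Omega$. Thus closedness of the set of extreme points in $\br^n$ will follow from a continuity argument on the foliation.

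Let $\{\xi_n\}$ be a sequence of extreme points converging to $\xi \in \partial \Omega$. I will show $\xi + \AC^\circ(\Omega) \subset \Omega$, which by Theorem~\ref{thm-folliation}(ii) gives that $\xi$ is extreme. Fix $v \in \AC^\circ(\Omega)$ and choose a relatively open neighborhood $U \subset \AC^\circ(\Omega)$ of $v$ in the affine hull $W$ of $\AC^\circ(\Omega)$. Since $\xi_n + U \subset \xi_n + \AC^\circ(\Omega) \subset \Omega$ for each $n$, passing to the limit gives $\xi + U \subset \overline{\Omega}$. If $W = \br^n$, then $\xi + U$ is open in $\br^n$, forcing $\xi + U \subset \Omega$ because $\partial\Omega$ has empty interior. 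In the general case, the quotient projection $\pi : \br^n \to \br^n/(W-W)$ sends $\Omega$ onto an open convex set whose fibers meet $\Omega$ precisely in the leaves of the asymptotic foliation, so the assignment $\eta \mapsto \pi(\eta)$ identifies the set of asymptotic cone points with $\pi(\Omega)$; since $\pi(\xi_n) \in \pi(\Omega)$ and the foliation is parallel, one concludes $\pi(\xi) \in \pi(\Omega)$, whence $\xi + W$ meets $\Omega$ in a leaf, which must be $\xi + \AC^\circ(\Omega)$. In either case, $\xi$ is an asymptotic cone point, hence extreme.

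For the ``Especially'' clause, write $\gamma = \lim g_n$ with $g_n \in \Aut(\Omega)$ in $\PM(n+1,\br)$, and assume $\xi$ is extreme with $\xi \notin K(\gamma)$. Each $g_n(\xi)$ is extreme because $\Aut(\Omega)$ permutes the extreme points (they are intrinsically defined faces of $\overline{\Omega}$), and $g_n(\xi) \to \gamma(\xi)$. If $\gamma(\xi) \in \br^n$, the first part shows $\gamma(\xi)$ is extreme; if $\gamma(\xi) \in \partial_{\infty} \Omega$, the same reasoning applied in the projective closure shows it is an infinite extreme point.

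The main obstacle is verifying $\pi(\xi) \in \pi(\Omega)$ rather than $\pi(\xi) \in \partial \pi(\Omega)$ when $\AC^\circ(\Omega)$ is not full-dimensional, since limits of interior points in an open convex set can a priori land on its boundary. Resolving this relies on the rigid parallel structure of the asymptotic foliation together with quasi-homogeneity—specifically, the transitive $G$-action on the asymptotic cone points from Theorem~\ref{transitive} together with Proposition~\ref{bdd-face}—to rule out the degenerate case in which $\xi + W$ would lie in a supporting affine subspace of $\Omega$ rather than cutting through its interior.
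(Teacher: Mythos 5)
Your overall strategy coincides with the paper's: both reduce the statement to Theorem~\ref{thm-folliation}(ii) (extreme points $=$ asymptotic cone points) and then argue that the set of asymptotic cone points is closed in $\br^n$ because the asymptotic foliation is parallel and $\Aut(\Omega)$-invariant; the paper disposes of this in a single sentence. Your write-up is more explicit, and in being explicit it isolates the one place where all the content sits --- and then does not prove it. The claim that $\pi(\xi_n)\in\pi(\Omega)$ and $\pi(\xi_n)\to\pi(\xi)$ force $\pi(\xi)\in\pi(\Omega)$ is precisely what must be shown: $\pi(\Omega)$ is open, so a priori the limit lands on $\partial\pi(\Omega)$, which happens exactly when $(\xi+W)\cap\Omega=\emptyset$, i.e.\ when the entire cone $\xi+\AC(\Omega)$ lies in $\partial\Omega$. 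Your closing paragraph names this as ``the main obstacle'' and says resolving it ``relies on'' Theorem~\ref{transitive} and Proposition~\ref{bdd-face}, but no argument is supplied; as written this is a plan, not a proof, and it is the whole lemma. (A workable route: by Theorem~\ref{transitive} write $\xi_n=g_n\xi_1$, pass to a singular limit $g\in\PM(n+1,\br)$, and use Lemma~\ref{singular} to control where the orbit of a cone point can accumulate. The paper does not spell this out either, but your proposal explicitly leaves it open.)

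There is a second, smaller gap. Even once $\pi(\xi)\in\pi(\Omega)$ is known, so that $(\xi+W)\cap\Omega$ is a single leaf $\eta+\AC^{\circ}(\Omega)$, you assert this leaf ``must be'' $\xi+\AC^{\circ}(\Omega)$. What is immediate is only $\xi\in(\xi+W)\cap\overline{\Omega}=\eta+\AC(\Omega)$, i.e.\ $\xi=\eta+u$ with $u$ in the closed cone; you still need to exclude $u\neq 0$, since otherwise $\xi$ lies in the relative interior of a segment of $\partial\Omega$ and is not extreme. This can be repaired via continuity of the cone-point section $q\mapsto\eta_q$ over $\pi(\Omega)$ (e.g.\ take $\eta_q$ to minimize a linear functional positive on $\AC(\Omega)\setminus\{0\}$), applied to $\xi_n=\eta_{\pi(\xi_n)}\to\eta_{\pi(\xi)}$. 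The ``Especially'' clause is handled correctly once the first part is in place, with the implicit (and harmless) restriction to $\xi\notin K(\gamma)$.
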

\begin{proof}
Since every extreme point is an asymptotic cone point and an asymptotic foliation is preserved by $\Aut(\Omega)$, any limit point of a sequence of extreme points must be an extreme point.

\end{proof}

The following corollary will be used later in this paper.
\begin{co}\label{AC-Face}
Let $\Omega$ be a quasi-homogeneous properly convex affine domain in $\mathbb R^n$ and $S$ be the set of extreme points of $\Omega$. Then  the following holds.
\begin{enumerate}
\item [\rm (i)] For any extreme point $\xi$,
$$(\xi+\emph{AC}(\Omega ))\cap \Omega=\xi+\emph{AC}^\circ (\Omega ),$$
that is, $\xi+\emph{AC}^\circ (\Omega )$ cannot be a face of $\Omega$ unless $\Omega$ itself is a cone.
\item [\rm (ii)] If $\Omega=F_1\dot{+}F_2$, then $\bar{F_i} \cap \partial_{\infty}\Omega\neq\emptyset$ for $i=1,2$, and $S$ is contained entirely in one of $F_1$ and $F_2$. So either $S\subset F_1$ and $ F_2 \subset \partial_{\infty}\Omega$ holds or $S\subset F_2$ and $F_1 \subset \partial_{\infty}\Omega$ holds.
\item [\rm (iii)]  If $\overline\Omega=\overline{CH(S)}$, then $\Omega$ cannot have any conic face and $\Aut(\Omega)$ is irreducible.
\end{enumerate}
\end{co}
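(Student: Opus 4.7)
The plan is to prove the three parts in order, using the asymptotic foliation (Theorem \ref{thm-folliation}), the transitivity of automorphisms on extreme points (Theorem \ref{transitive}), and the structural results recalled above.

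For (i), Theorem \ref{thm-folliation} realizes $\xi+\emph{AC}^{\circ}(\Omega)$ as the leaf of the asymptotic foliation through $\xi$, and this leaf lies entirely in $\Omega$. The affine hull $L_\xi$ of the leaf is the translate by $\xi$ of the affine hull of $\emph{AC}(\Omega)$, and $L_\xi\cap\overline{\Omega}$ is the closed convex set $\xi+\emph{AC}(\Omega)$ whose relative interior is $\xi+\emph{AC}^{\circ}(\Omega)$; this yields $(\xi+\emph{AC}(\Omega))\cap\Omega=\xi+\emph{AC}^{\circ}(\Omega)$. Since the leaf meets the interior of $\Omega$, the only way it can be a face of $\overline{\Omega}$ is as the top face, so $\Omega=\xi+\emph{AC}^{\circ}(\Omega)$, making $\Omega$ a cone.

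For (ii), I restrict to decompositions in which both summands have positive dimension. Each $\overline{F_i}$ is then a face of $\overline{\Omega}$ of positive dimension, so Proposition \ref{bdd-face} forces it to be unbounded and hence to meet $\partial_{\infty}\Omega$. The supports $\langle F_1\rangle$ and $\langle F_2\rangle$ are disjoint projective subspaces, whence $\overline{F_1}\cap\overline{F_2}=\emptyset$, so the extreme points of $\overline{\Omega}$ are partitioned into those of $\overline{F_1}$ and those of $\overline{F_2}$. After passing to the finite-index subgroup of $G$ preserving each $F_i$ (still syndetic), Theorem \ref{transitive} gives transitivity of $G$ on $S$, which forces $S$ into a single summand, say $S\subset\overline{F_1}$. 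Then all extreme points of $\overline{F_2}$ lie at infinity, and since convex combinations of points in $\RP^{n-1}_{\infty}$ remain there, Krein--Milman yields $\overline{F_2}\subset\partial_{\infty}\Omega$.

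For (iii), suppose $\Omega$ has a conic face $F$. By Theorem \ref{section-benz}, $F$ is a convex summand: $\Omega=F\dot{+}F'$. If $F'$ is a single point $v$, then $\Omega$ is a cone with apex $v$; since $G$ fixes $v$, the transitivity of $G$ on $S$ forces $S=\{v\}$, hence $\overline{CH(S)}=\{v\}\neq\overline{\Omega}$, contradicting the hypothesis. Otherwise both summands have positive dimension, and (ii) confines $S$ to one of them, so $\overline{CH(S)}$ lies in the closure of a proper face---again a contradiction. For irreducibility, let $L$ be a $G$-invariant proper affine subspace. By transitivity of $G$ on $S$, either $S\subset L$---whence $\overline{\Omega}=\overline{CH(S)}\subset L$ contradicts $\dim\Omega=n$---or $S\cap L=\emptyset$. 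The main obstacle is the latter case: the plan is to use Proposition \ref{Vey}(iii) to obtain $p\in L\cap\partial\Omega$, observe that the face of $\overline{\Omega}$ through $p$ cannot lie entirely at infinity (since $p\in\br^n$) and thus must contain a finite extreme point, and then combine the $G$-invariance of $L$ with the absence of conic faces just established to force a convex sum decomposition of $\Omega$ contradicting (ii).
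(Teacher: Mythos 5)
Your part (i) and the conic-face half of (iii) follow the paper's own route: (i) is Theorem \ref{thm-folliation} plus the observation that the leaf through an extreme point lies in $\Omega$, and the conic-face argument is exactly the paper's (Theorem \ref{section-benz} gives $\Omega=F\dot{+}F'$, then part (ii) confines $S$ to one summand, contradicting $\overline{CH(S)}=\overline\Omega$; your separate treatment of $F'=\{v\}$ is consistent with the fact that (ii) implicitly concerns positive-dimensional summands, as the paper's own proof of (ii) via Proposition \ref{bdd-face} does). In (ii) you take a genuinely different route: the paper deduces that $S$ lies in one summand from the \emph{connectedness} of $S$ (Theorem \ref{thm-folliation} and Lemma \ref{extreme}) together with $\overline{F_1}\cap\overline{F_2}=\emptyset$, whereas you pass to ``the finite-index subgroup of $G$ preserving each $F_i$'' and invoke Theorem \ref{transitive}. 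That this stabilizer has finite index (hence is still syndetic) is not automatic --- an automorphism may carry the given decomposition to a different one --- and justifying it requires the uniqueness of the splitting of a properly convex domain into indecomposable factors, which you neither prove nor cite. The paper's connectedness argument sidesteps this entirely.

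The genuine gap is the irreducibility claim in (iii). Your dichotomy is sound: $S\cap L$ is $G$-invariant, so by Theorem \ref{transitive} either $S\subset L$ (excluded because $\overline{CH(S)}=\overline{\Omega}$ would force $\overline{\Omega}\subset \overline{L}$) or $S\cap L=\emptyset$. But in the second case you only announce a plan --- ``combine the $G$-invariance of $L$ with the absence of conic faces to force a convex sum decomposition contradicting (ii)'' --- without saying how such a decomposition arises; the passage from ``the face through a point $p\in L\cap\partial\Omega$ has a finite extreme point in its closure'' to an actual splitting $\Omega=F_1\dot{+}F_2$ is precisely the hard step, and it is missing. (The paper itself disposes of irreducibility in one sentence by citing Theorem \ref{transitive}, so it also leaves this case implicit; but as a self-contained argument your version is incomplete exactly here. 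A workable completion is to take $g_i\in G$ with singular limit $g$ whose range is a single extreme point, so that $K(g)$ is a hyperplane: if $L\not\subset K(g)$ then $g_i$ pushes a point of $L$ to that extreme point, which then lies in $L$, returning you to the first branch; if $L\subset K(g)$ one aims to apply Lemma \ref{supker} to produce a conic face, contradicting what you have already proved.)
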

\begin{proof}
\begin{enumerate}
\item [\rm (i)] For any point $x\in\Omega$, the cone point $\xi_x$ of the leaf of asymptotic foliation of $\Omega$, which contains $x$, is an extreme point by Theorem \ref{thm-folliation} and thus $\xi_x+\emph{AC}^\circ (\Omega )$ is included in $\Omega$. 
Since  $\Aut(\Omega)$ acts transitively on $S$, $\xi+\emph{AC}^\circ (\Omega )$ should consist of interior points of $\Omega$ for all $\xi \in S$.
\item [\rm (ii)] Both $\bar{F_1}$ and $\bar{F_2}$ have an infinte boundary point since $\Omega$ cannot have any {non-zero dimensional} bounded face by Proposition \ref{bdd-face}. By Theorem \ref{thm-folliation} and Lemma \ref{extreme}, $S$ is connected and thus either $S\subset \bar{F_1}$ or $S\subset \bar{F_2}$ holds.
\item [\rm (iii)]  Suppose $\Omega$ has a conic face $F_1$. Then there is a face $F_2$ of $\Omega$ such that $ F_2 \subset \partial_{\infty}\Omega$ and $\Omega=F_1\dot{+}F_2$ (see (ii) of Theorem \ref{section-benz} in the next section). This implies that $S\subset F_1$ and $\Omega$ is affinely equivalent to 
$F_1 \times C(F_2)$, where $C(F_2)$ is a cone over $F_2$. But this is a contradiction because 
 $\overline{CH(S)}\subset \overline {F_1} \neq \overline\Omega$.
By Theorem \ref{transitive}, $\Aut(\Omega)$ cannot have any invariant proper subspace.
\end{enumerate}
\end{proof}

\section{Benz\'{e}cri's result}\label{Ben-Result}

The structure of quasi-homogeneous domains have been studied a lot in convex case since Benz\'{e}cri. Here are some results about (quasi-homogeneous) convex domains which are needed later in this paper.
\begin{theorem}[Benz\'{e}cri, \cite{Ben}]\label{prop-benz}
Let $\Omega$ be a properly convex domain in $\rp{n}$.
\begin{enumerate}
\item [\rm(i)]
If $\Omega$ has an osculating ellipsoid
$Q$, then there exists a sequence $\{g_{n}\}\subset  \PGL(n+1,\br)$
such that $g_{n}\Omega $ converges to $Q$.
\item [\rm(ii)]
Let $F$ be a conic face of $\Omega$. Then there exist a projective subspace $L$ of $\rp{n}$ and
projective automorphisms $\{h_i\}$ of $\rp{n}$
such that $\{h_i\Omega \}$ converges to $F\dot{+}B$ for some properly
convex domain $B$ in $L$.
\item[\rm(iii)]
If $\Omega = {\Omega}_1 \dot{+}{\Omega}_2$, then $\Omega$ is
quasi-homogeneous (respectively, homogeneous) if and only if ${\Omega}_i$
is quasi-homogeneous (respectively, homogeneous) for each i.
\end{enumerate}
\end{theorem}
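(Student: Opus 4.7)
The plan is to treat the three parts of Theorem \ref{prop-benz} in order, using the compactness of the space of properly convex projective domains modulo $\PGL(n+1,\br)$ and carefully chosen one-parameter subgroups to produce the asserted limits. For (i), I would work in an affine chart adapted to the osculating ellipsoid: choose coordinates so that $p = 0$, the tangent hyperplane at $p$ is $\{x_n = 0\}$, and the boundary near $p$ is $x_n = f(x_1,\ldots,x_{n-1})$ with $f(x)/|x|^2 \to 1$. Apply the one-parameter family of linear dilations $g_t(x_1,\ldots,x_{n-1},x_n) = (tx_1,\ldots,tx_{n-1}, t^2 x_n)$, which preserve the model paraboloid $Q_0 = \{x_n > x_1^2 + \cdots + x_{n-1}^2\}$ and zoom in on $p$ as $t \to \infty$. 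On any compact subset of $Q_0$, the preimage $g_t^{-1}$ eventually lies in the neighborhood where the osculating approximation is valid, so the Hausdorff limit of $g_t \Omega$ is $Q_0$, projectively equivalent to $Q$; the parts of $\Omega$ far from $p$ are pushed out to infinity and disappear.

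For (ii), given the flag of supporting hyperplanes $H_1,\ldots,H_{n-k}$ realizing the conic face $F$ as in \eqref{conic-hyperplanes}, I would pick coordinates so that each $H_i$ is a coordinate hyperplane and $\langle F \rangle$ is a coordinate subspace, then choose a one-parameter subgroup $\{h_t\} \subset \PGL(n+1,\br)$ acting trivially on $\langle F\rangle$ and contracting each transverse direction with eigenvalues compatible with the flag (so that all $H_i$ are preserved). Since each $H_i$ is supporting, $h_t \overline \Omega$ remains in $\bigcap_i H_i^+$, and in the Hausdorff limit the part near $\langle F\rangle$ fills $\overline F$ while a transverse slice stabilizes to a properly convex set $B$ in a complementary subspace $L$. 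The limit is then $F \dot{+} B$; the main subtlety is showing the transverse limit $B$ is nondegenerate, neither collapsing to lower dimension nor containing a line, which follows from conicity of the flag together with proper convexity of $\Omega$.

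For (iii), the key fact is that the decomposition of $\overline\Omega$ into maximal indecomposable convex summands is unique up to permutation; every $g \in \Aut_{\text{proj}}(\Omega)$ permutes these components, so a finite-index subgroup preserves each component and hence splits across the summands $\Omega_1$ and $\Omega_2$. Conversely any pair $(g_1, g_2) \in \Aut_{\text{proj}}(\Omega_1) \times \Aut_{\text{proj}}(\Omega_2)$ extends to an automorphism of $\Omega$ via the join construction on disjoint supports. Given this splitting, if $K_i$ is a compact fundamental set for a syndetic $G_i$-action on $\Omega_i$, then $G_1 \times G_2$ acts syndetically on $\Omega$ via the join of $K_1$ and $K_2$; conversely, projecting the factor-preserving finite-index subgroup of a syndetic action on $\Omega$ yields syndetic actions on each $\Omega_i$. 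The homogeneous case follows by the same logic with transitivity in place of syndeticity. The main obstacle throughout parts (i) and (ii) is verifying Hausdorff convergence of $g_t \Omega$ (respectively $h_t \Omega$) without degeneration of the limit, which relies on proper convexity of $\Omega$ together with the explicit construction of the one-parameter subgroup.
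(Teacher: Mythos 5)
First, a caveat: the paper does not prove Theorem \ref{prop-benz} at all --- it is quoted from Benz\'ecri \cite{Ben} and used as a black box --- so there is no in-paper argument to compare yours against, and I am judging the sketch on its own. Parts (i) and (iii) are essentially the standard arguments and are sound in outline. For (i), the one step you should make explicit is that the osculating condition forces $\overline\Omega\cap\{x_n=0\}=\{p\}$ (local strict convexity at $p$ plus convexity of $\overline\Omega$), so that every point of $\Omega$ bounded away from $p$ is sent by $g_t$ to the single infinite boundary point $[0:\cdots:0:1:0]$ of the model paraboloid; without this, the ``far parts disappear'' step could contribute limit points outside $\overline Q$. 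For (iii), your reduction to a finite-index subgroup preserving $\langle\Omega_1\rangle$ and $\langle\Omega_2\rangle$, together with the equivariant projections $\pi_i:\Omega\to\Omega_i$ with center $\langle\Omega_{3-i}\rangle$, does transfer syndeticity in both directions, provided you also include the one-parameter group of scalings $\mathrm{diag}(\lambda\,\mathrm{id},\mu\,\mathrm{id})$ needed to sweep out the joining segments in the converse direction.

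The genuine gap is in (ii). The phrase ``acting trivially on $\langle F\rangle$ and contracting each transverse direction with eigenvalues compatible with the flag'' is wrong, or at best underdetermined, at exactly the point where the proof lives. If the transverse directions are contracted relative to the span of $F$, the limit of $h_t$ in $\PM(n+1,\br)$ is the projection onto $\langle F\rangle$, and $h_t\overline\Omega$ collapses into the $k$-dimensional subspace $\langle F\rangle$ (take $\Omega$ a simplex and $F$ a facet): no join appears. The correct degeneration is the opposite one, with a \emph{single} rate: $h_t=\mathrm{id}$ on the linear span of $F$ and $e^{t}\,\mathrm{id}$ on one complementary subspace $\tilde L$, so that $h_t\to q$ in $\PM(n+1,\br)$, where $q$ is the projection with kernel $\langle F\rangle$ and range $L$; one then proves $\lim h_t\overline\Omega=\overline F\,\dot{+}\,\overline{q(\Omega)}$. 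Two inputs are missing from your sketch. (a) The flag of supporting hyperplanes is used precisely to show that $B:=q(\Omega)$ is \emph{properly} convex: the $H_j$ descend to $n-k$ hyperplanes in general position in $L\cong\rp{n-k-1}$ and $q(\Omega)$ lies in one of the simplices they cut out --- this is the computation of the paper's Lemma \ref{supker} run in reverse, and it is the only place conicity enters. (b) The identity $\overline\Omega\cap\langle F\rangle=\overline F$ is what guarantees that the $\langle F\rangle$-part of the limit is exactly $\overline F$ rather than some larger convex subset of $\langle F\rangle$, so that the limit really has the join form $F\dot{+}B$. Without (a) the transverse factor can contain a line; without (b) ``the part near $\langle F\rangle$ fills $\overline F$'' is only an inclusion in one direction.
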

\begin{theorem}[Benz\'{e}cri, \cite{Ben}]\label{section-benz}
Let $\Omega$ be a quasi-homogeneous properly convex domain in $\rp{n}$. 
\begin{enumerate}
\item [\rm(i)]
If $\Omega$ has an osculating ellipsoid, then $\Omega $ is projectively equivalent to a ball. 
\item [\rm(ii)]
If $\Omega$ has a conic face $F$ of $\Omega$, then there exists another conic face $B$ of $\Omega$  such that $\Omega=F\dot{+}B$.
\end{enumerate}
\end{theorem}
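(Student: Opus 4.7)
The plan is to derive both parts from the limit-of-projective-images results in Theorem \ref{prop-benz} by invoking Benz\'{e}cri's properness theorem for the $\PGL(n+1,\br)$-action on the space $\mathcal{X}^n$ of pointed properly convex domains $(\Omega,x)$ in $\rp{n}$. Concretely, Benz\'{e}cri's compactness will be used in the following form: if $f_n\in\PGL(n+1,\br)$, $(\Omega_n,x_n)\to(\Omega_\infty,x_\infty)$ in $\mathcal{X}^n$, and $(f_n\Omega_n,f_n x_n)\to(\Omega',x')$ in $\mathcal{X}^n$, then some subsequence of $f_n$ converges to $f_\infty\in\PGL(n+1,\br)$ with $f_\infty\Omega_\infty=\Omega'$. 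Quasi-homogeneity of $\Omega$ enters precisely to arrange that the basepoints remain in $\mathcal{X}^n$ rather than escape to the boundary.

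For (i), apply Theorem \ref{prop-benz}(i) to obtain $g_n\in\PGL(n+1,\br)$ with $g_n\Omega\to Q$. Fix a compact $K\subset\Omega$ and $G\subset\Aut(\Omega)$ with $GK=\Omega$, and pick a point $q\in Q$. For all large $n$ we have $g_n^{-1}(q)\in\Omega$, so write $g_n^{-1}(q)=\gamma_n(k_n)$ with $\gamma_n\in G$ and $k_n\in K$. Passing to a subsequence so that $k_n\to k_\infty\in K$, set $f_n:=g_n\gamma_n$. Since $\gamma_n\in\Aut(\Omega)$ we still have $f_n\Omega=g_n\Omega\to Q$, while $f_n(k_\infty)\to q$ by construction. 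Applying the properness statement above to the constant sequence $(\Omega,k_\infty)$ with images $(f_n\Omega,f_n k_\infty)\to(Q,q)$ extracts $f_\infty\in\PGL(n+1,\br)$ with $f_\infty\Omega=Q$. Hence $\Omega$ is projectively equivalent to the ellipsoid $Q$, proving (i).

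For (ii), apply Theorem \ref{prop-benz}(ii) to produce $h_i\in\PGL(n+1,\br)$ with $h_i\Omega\to F\dot{+}B_0$ for some properly convex $B_0$ in a projective subspace. The same pointed-compactness argument yields $h_\infty\in\PGL(n+1,\br)$ with $h_\infty\Omega=F\dot{+}B_0$; pulling back gives $\Omega=F\dot{+}B$ with $B:=h_\infty^{-1}(B_0)$, and $B$ is a face of $\Omega$ directly from the definition of convex sum. It remains to verify that $B$ is conic. Set $k:=\dim F$. Since $\langle F\rangle\cap\langle B\rangle=\emptyset$ and the two supports together span $\rp{n}$, we have $\dim B=n-k-1$, so one must exhibit $k+1$ supporting hyperplanes of $\Omega$ whose common intersection is exactly $\langle B\rangle$, with strictly decreasing intermediate intersections. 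Choose $k+1$ supporting hyperplanes $H'_1,\ldots,H'_{k+1}$ of $F$ inside $\langle F\rangle$ in general position so that $\bigcap_j H'_j=\emptyset$ in $\langle F\rangle$, and let $H_j$ be the projective hyperplane of $\rp{n}$ spanned by $H'_j$ and $\langle B\rangle$ (a dimension count $(k-1)+(n-k-1)+1=n-1$ confirms $H_j$ is a hyperplane). Each $H_j$ supports $\Omega$, because $\Omega=F\dot{+}B$ lies entirely on the $F$-side of $H_j$ once $\langle B\rangle\subset H_j$ and $H'_j$ supports $F$ in $\langle F\rangle$. Finally $\bigcap_j H_j=\langle B\rangle$, with the strictly decreasing chain inherited from the chain of intersections of the $H'_j$, so $B$ is conic.

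The main obstacle is the Benz\'{e}cri properness step: without quasi-homogeneity there is no way to rearrange the divergent projective sequence by elements of $\Aut(\Omega)$ to keep a basepoint in a fixed compact set, the pair $(g_n\Omega,g_n x_0)$ would leave $\mathcal{X}^n$, and no projective equivalence between $\Omega$ and the limit could be extracted. The conicness check in (ii) is elementary once the decomposition $\Omega=F\dot{+}B$ is in hand, but it relies on the fact that supporting hyperplanes of $F$ inside $\langle F\rangle$ lift to supporting hyperplanes of $\Omega$ via the join with $\langle B\rangle$, which is what forces the structure demanded by Definition \ref{def-Ben}(ii).
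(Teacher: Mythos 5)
The paper does not actually prove Theorem \ref{section-benz}; it is imported from Benz\'{e}cri \cite{Ben} without argument, so there is no in-paper proof to compare against. Your reconstruction follows what is essentially Benz\'{e}cri's own route: run the degenerations of Theorem \ref{prop-benz}, use quasi-homogeneity to renormalize by elements of $\Aut(\Omega)$ so that a basepoint stays in a fixed compact set, and invoke properness of the $\PGL(n+1,\br)$-action on pointed properly convex domains to promote the Hausdorff limit to a genuine projective equivalence. Part (i) is correct up to one cosmetic slip: you know $f_n(k_n)=q$ exactly, not that $f_n(k_\infty)\to q$; apply your properness statement to $(\Omega,k_n)\to(\Omega,k_\infty)$ and $(f_n\Omega,f_nk_n)=(g_n\Omega,q)\to(Q,q)$ and the extraction of $f_\infty$ goes through. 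The second half of (ii) --- that a summand $B$ of a convex sum is a conic face, obtained by joining $k+1$ bounding hyperplanes of $F$ inside $\langle F\rangle$ with $\langle B\rangle$ --- is also sound, and the linear-algebra count $\bigcap_j H_j=\langle B\rangle$ checks out.

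The genuine gap is in the first half of (ii). Pulling back $h_\infty\Omega=F\dot{+}B_0$ gives
\[
\Omega \;=\; h_\infty^{-1}(F)\ \dot{+}\ h_\infty^{-1}(B_0),
\]
and you silently replace $h_\infty^{-1}(F)$ by $F$. Nothing in the statement of Theorem \ref{prop-benz}(ii) as quoted guarantees this: a priori $h_\infty^{-1}(F)$ is only some $k$-dimensional face of $\Omega$ projectively equivalent to $F$, whereas the theorem asserts that the \emph{given} conic face $F$ is itself a summand. To close this you need the extra information that the $h_i$ in Benz\'{e}cri's construction may be chosen to fix $\langle F\rangle$ pointwise (they expand transversally to the support of $F$, which is exactly why $F$ itself, rather than a projective image of it, appears in the limit); then $h_\infty$ restricts to the identity on $\langle F\rangle$ and $h_\infty^{-1}(F)=F$. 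Either cite that refinement explicitly or supply an independent argument identifying $h_\infty^{-1}(F)$ with $F$; as written, the conclusion $\Omega=F\dot{+}B$ does not follow from what you have invoked.
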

From Benz\'{e}cri's result, we get the following lemmas, which are proved in \cite{Jo}.

\begin{lemma}\label{singular} Let $\Omega$ be a properly convex
domain in $\RP^n$. Suppose a sequence $g_i\in \emph\Aut_{\emph{proj}}(\Omega)$
converges to a singular projective transformation $g\in \PM(n+1,\br)$. Then
$K(g)$ and $R(g)$ do not meet $\Omega$ and the following holds:

\begin{enumerate}
\item [\rm (i)]{$K(g)$ is a supporting subspace of $\Omega$}.
\item [\rm (ii)]{$R(g)$ is a support of a proper face $F$ of $\Omega$}.
\item [\rm (iii)]{If $x_0 \in \Omega$ and $\lim_{i\rightarrow\infty} g_i x_0 = \xi \in \partial{\Omega}$, then $R(g)$ is the support of
the face $F$ containing $\xi$, { that is, $R(g)=\langle F \rangle$.}
\item [\rm (iv)]$g(\Omega)=F$, that is, for any point $\eta \in F$ there
exists $x \in \Omega$ so that  $\lim_{i\rightarrow\infty} g_i x=\eta$. }
\end{enumerate}
\end{lemma}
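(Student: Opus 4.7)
The plan is to combine two tools: each $g_i$ is an isometry of the Hilbert metric $d_\Omega$, and the pointwise convergence property $g_i(p_i)\to g(p)$ holds whenever $p_i\to p\notin K(g)$. Throughout I work with normalized matrix lifts $\tilde g_i\to\tilde g$ in $\mathrm{End}(\br^{n+1})$ and with the properly convex open cone $\tilde\Omega\subset\br^{n+1}$ covering $\Omega$. The first observation is that for every $x_0\in\Omega$, $g_ix_0$ cannot accumulate in $\Omega$: if it did, Hilbert-isometry would keep every orbit $\{g_iy\}_i$ in a compact subset of $\Omega$, and Arzel\`a--Ascoli would yield a subsequential limit in $\Aut_{\text{proj}}(\Omega)\subset\PGL(n+1,\br)$, contradicting singularity of $g$. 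Hence $g_ix_0\to\partial\Omega$, and when $x_0\in\Omega\setminus K(g)$ the limit equals $g(x_0)\in R(g)\cap\partial\Omega$.

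For (i), suppose some $[\tilde p]\in K(g)\cap\Omega$ with $\tilde p\in\tilde\Omega$. Pick $\tilde u$ with $\tilde g\tilde u\neq 0$; for small $\epsilon>0$ both $\tilde p\pm\epsilon\tilde u$ lie in $\tilde\Omega$, so $\tilde g_i(\tilde p\pm\epsilon\tilde u)\in\tilde\Omega$, and the limit $\pm\epsilon\,\tilde g\tilde u\in\overline{\tilde\Omega}$ produces a pair of nonzero antipodal vectors in the properly convex cone, which is impossible. Hence $K(g)\cap\Omega=\emptyset$ and $g$ is defined on all of $\Omega$.

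For (ii) and (iii), fix $x_0\in\Omega$ with $g_ix_0\to\xi=g(x_0)\in\partial\Omega$, and let $F_\xi$ be the face of $\overline\Omega$ through $\xi$. For any $y\in\Omega$, choose $a,b\in\Omega$ with $x_0,y\in(a,b)$; each $(g_ia,g_ib)\subset\Omega$ converges to an open segment $(g(a),g(b))\subset\overline\Omega$ containing both $\xi$ and $g(y)$ strictly, so $g(y)\sim\xi$ and $g(y)\in F_\xi$. Thus $g(\Omega)\subset F_\xi$. Since $g\colon\RP^n\setminus K(g)\to R(g)$ is an open map (being a projection from $K(g)$) and $\Omega\cap K(g)=\emptyset$, $g(\Omega)$ is relatively open in $R(g)$, so $R(g)\subseteq\langle F_\xi\rangle$; and $R(g)\cap\Omega\neq\emptyset$ would make $R(g)\cap\partial\Omega$ nowhere dense in $R(g)$, contradicting that it contains the relatively open set $g(\Omega)$. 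For the reverse inclusion $\langle F_\xi\rangle\subseteq R(g)$, I pass to a subsequential limit $h$ of $g_i^{-1}$ (necessarily singular, else $g$ would be nonsingular), and run the same analysis on $h$ to obtain $R(h)=\langle F_{\xi'}\rangle$ for $\xi'=\lim g_i^{-1}x_0$; then $\tilde g\tilde h=0$, which follows from $\tilde g_i\tilde g_i^{-1}=\lambda_i I$ upon normalizing with $\lambda_i\to 0$, yields $R(h)\subseteq K(g)$ and $R(g)\subseteq K(h)$, and a rank--nullity count matches $\dim R(g)=\dim\langle F_\xi\rangle$.

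For (iv), $g(\Omega)$ is a non-empty open convex subset of $F_\xi$, and the claim is equality. Given $\eta\in F_\xi$, the plan is to produce $x\in\Omega$ with $g(x)=\eta$ using the fiber $g^{-1}(\eta)$, a projective subspace spanned by $K(g)$ together with any one preimage: a generic $h$-preimage $\eta'=h(\eta)\in R(h)\cap\partial\Omega$ spans the fiber with $K(g)$, and a transversality argument shows the fiber must meet $\Omega$ since $K(g)\cap\Omega=\emptyset$ while $\eta'\notin\langle K(g)\rangle$ generically. The main obstacle I anticipate is precisely this last step, together with the $\langle F_\xi\rangle\subseteq R(g)$ half of (ii): both require coupling the kernel/range data of $g$ and $h$ through the dual relation $\tilde g\tilde h=0$ and handling special cases where $\eta\in K(h)$ or where the candidate fiber is tangent to $\overline\Omega$. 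Absent these dual-sequence considerations, the face geometry of $\overline\Omega$ does not obviously pin down $R(g)$ or the surjectivity onto $F_\xi$.
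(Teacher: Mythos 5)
First, note that the paper does not actually prove Lemma \ref{singular}: it is stated with the attribution ``which are proved in \cite{Jo}'', so there is no in-paper proof to compare against. Judged on its own merits, your proposal correctly establishes the easier half of the lemma. The escape-to-the-boundary argument via properness of the Hilbert isometric action, the antipodal-vector trick in the cone $\tilde\Omega$ showing $K(g)\cap\Omega=\emptyset$, the segment-image argument giving $g(\Omega)\subset F$, and the openness of $g(\Omega)$ in $R(g)$ yielding both $R(g)\subseteq\langle F\rangle$ and $R(g)\cap\Omega=\emptyset$ are all sound. But three of the four numbered assertions are left with genuine holes.

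For (i), you prove only $K(g)\cap\Omega=\emptyset$; the actual claim is that $K(g)$ is a \emph{supporting subspace}, i.e.\ that every face meeting $K(g)$ has its support inside $K(g)$. You never address this. (It does follow from your own antipodal trick: if $q\in F'\cap K(g)$ and $r\in F'\setminus K(g)$, the face relation provides a point $s\in\overline\Omega$ with $q\in(s,r)$, so a lift $\tilde s=a\tilde q-b\tilde r$ lies in $\overline{\tilde\Omega}$ and $\tilde g\tilde s=-b\tilde g\tilde r$ is antipodal to $\tilde g\tilde r\in\overline{\tilde\Omega}\setminus\{0\}$ --- but this step must be written.) For the reverse inclusion $\langle F\rangle\subseteq R(g)$ in (ii)--(iii), your dual-sequence argument is circular: ``running the same analysis'' on $h=\lim g_i^{-1}$ yields only $R(h)\subseteq\langle F_{\xi'}\rangle$, not equality, and the relation $\tilde g\tilde h=\tilde h\tilde g=0$ gives merely $R(h)\subseteq K(g)$ and $R(g)\subseteq K(h)$, hence $\dim R(g)+\dim R(h)\le n-1$ --- an inequality pointing the wrong way, which cannot produce the needed lower bound on $\dim R(g)$. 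For (iv), the proposed fiber construction is actually incorrect as stated: since $\tilde g\tilde h=0$, the point $h(\eta)$ lies in $R(h)\subseteq K(g)$, so it is annihilated by $g$ rather than being a $g$-preimage of $\eta$, and it adds nothing to the span of $K(g)$; no reason is given why the true fiber $g^{-1}(\eta)$ should meet $\Omega$ for $\eta$ far from $\xi$ inside $F$, which is precisely the content of (iv). You candidly flag these last two points as anticipated obstacles, and they are exactly where the real work of the lemma lies; as written, parts (i) (supporting subspace), the equality in (ii)--(iii), and (iv) remain unproved.
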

{Note that $R(g)$ is equal to $\langle R(g) \cap \overline{\Omega} \rangle$, but   $K(g)$ might properly contain  $\langle K(g) \cap \overline{\Omega} \rangle$.}

\begin{lemma}[Jo, \cite{Jo}]\label{kernel}
Let $\{f_{i}\}$ be a sequence in $\text{Aff}(n,\br)$. Suppose that $f_i$ converges to $f\in  \mbox{PM}(n+1,\mathbb
R)$ with $R(f)\cap \mathbb R^n \neq \emptyset$. Then $K(f)\cap
\mathbb R^n=\emptyset$.
\end{lemma}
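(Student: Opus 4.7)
My plan is to exploit the rigid form that an element of $\Aff(n,\br)$ takes when embedded in $\PM(n+1,\br)$. Each $f_i$ has a canonical matrix representative
$$F_i = \begin{pmatrix} A_i & b_i \\ 0 & 1 \end{pmatrix} \in M_{n+1}(\br).$$
Since $\PM(n+1,\br)$ identifies matrices up to a nonzero scalar, I would first normalize by $c_i = \|F_i\|_\infty$; because the $(n{+}1,n{+}1)$-entry of $F_i$ is $1$, one has $c_i \geq 1$. After passing to a subsequence, $F_i/c_i$ converges to some $F \in M_{n+1}(\br)$, and its class $[F]$ in $\PM(n+1,\br)$ coincides with the limit $f$.

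The crucial observation is that the last row of $F_i/c_i$ is $(0,\dots,0,1/c_i)$, so the last row of the limit $F$ is $(0,\dots,0,\alpha)$ with $\alpha = \lim 1/c_i \in [0,1]$. The argument then splits according to whether $\alpha$ vanishes.

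If $\alpha = 0$, then the last coordinate of $Fv$ is zero for every $v$, so $R(f) \subseteq \RP^{n-1}_\infty$. This contradicts the hypothesis $R(f) \cap \br^n \neq \emptyset$, ruling out this case. Hence $\alpha > 0$. In that situation, for any $x \in \br^n$ the representative $v = (x,1)^T$ of the affine point $[x:1]$ satisfies $(Fv)_{n+1} = \alpha \neq 0$, so $Fv \neq 0$ and $[x:1] \notin K(f)$. This gives $K(f) \cap \br^n = \emptyset$, as required.

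There is no serious obstacle here; the whole content of the lemma is the remark that the single nonzero entry in the bottom row of $F_i$ must survive projectively in the limit precisely when the limit's range is allowed to reach finite affine points. The only thing to check carefully is the normalization convention and the fact that $c_i \geq 1$ keeps the limit matrix genuinely defined in $\PM(n+1,\br)$.
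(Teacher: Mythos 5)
Your proof is correct. The paper itself does not reproduce a proof of this lemma (it is cited from \cite{Jo}), and your argument is the natural one: normalizing the affine representatives $F_i=\left(\begin{smallmatrix} A_i & b_i\\ 0&1\end{smallmatrix}\right)$ by $c_i=\|F_i\|_\infty\geq 1$ so that the limit's bottom row is $(0,\dots,0,\alpha)$, and observing that $\alpha=0$ forces $R(f)\subseteq \RP^{n-1}_\infty$ while $\alpha>0$ forces every vector $(x,1)^T$ out of the kernel, is exactly the dichotomy that makes the statement work.
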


\begin{lemma}[Jo, \cite{Jo}] \label{lem-saillant}
Let $\Omega$ be a quasi-homogeneous properly convex domain in
$\rp{n}$and $G$ a subgroup of $\mbox{Aut} ( \Omega ) $ acting on
$\Omega$ syndetically. Then for each point $p\in \partial \Omega$,
there exists a sequence $\{g_i\}\subset G$ and $x\in \Omega$ such
that $g_{i}(x)$ converges to $p$.
\end{lemma}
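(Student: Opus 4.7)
The plan is to exploit the syndeticity of the $G$-action on $\Omega$ together with the compactness of $\PM(n+1,\br)$ to extract a suitable (singular) limit of a sequence in $G$ that sends an interior point to the prescribed boundary point $p$.

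\emph{Setup via syndeticity.} Since $G$ acts syndetically on $\Omega$, fix a compact set $K\subset\Omega$ with $GK=\Omega$. Given $p\in\partial\Omega$, pick any sequence $x_i\in\Omega$ with $x_i\to p$, and decompose $x_i=g_i(k_i)$ with $g_i\in G$ and $k_i\in K$. By compactness of $K$, pass to a subsequence so that $k_i\to k\in K\subset\Omega$. Since $\PM(n+1,\br)$ is a compactification of $\PGL(n+1,\br)$, a further subsequence makes $g_i$ converge to some $g\in\PM(n+1,\br)$.

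\emph{The kernel of the limit avoids $\Omega$.} I claim $K(g)\cap\Omega=\emptyset$. If $g$ is singular this is part~(i) of Lemma~\ref{singular}. If $g$ were non-singular then $K(g)=\emptyset$ trivially; in fact this case does not actually occur, since one would have $g_i^{-1}\to g^{-1}$ in $\PGL(n+1,\br)$, both $g$ and $g^{-1}$ would then preserve $\overline{\Omega}$, forcing $g\in\Aut_{\text{proj}}(\Omega)$, which is incompatible with the conclusion $g(k)=p\in\partial\Omega$ derived below.

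\emph{Conclusion by uniform convergence.} The compact set $C:=\{k\}\cup\{k_i:i\ge 1\}\subset\Omega$ is disjoint from $K(g)$. The uniform-convergence property of $g_i\to g$ on compact subsets of $\RP^n\setminus K(g)$, recalled in the preliminaries, therefore yields
\[
g_i(k_i)\longrightarrow g(k)\quad\text{and}\quad g_i(k)\longrightarrow g(k).
\]
Combining with $g_i(k_i)=x_i\to p$ gives $g(k)=p$, whence $g_i(k)\to p$. Taking $x:=k\in\Omega$ produces the required sequence $\{g_i\}\subset G$ and interior point $x\in\Omega$.

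\emph{Main obstacle.} The only nontrivial ingredient is the claim $K(g)\cap\Omega=\emptyset$; once this is in hand, the conclusion follows from pure continuity and compactness. This is precisely where Lemma~\ref{singular} supplies the essential input, and it is that lemma which allows the singular limit $g$ to be usefully exploited rather than obstructive.
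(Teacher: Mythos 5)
The paper does not prove this lemma itself --- it is quoted from \cite{Jo} with no argument given --- so there is nothing internal to compare against. Your proof is correct and is the standard one: syndeticity gives $x_i=g_i(k_i)$ with $k_i$ in a fixed compact $K\subset\Omega$, compactness of $\PM(n+1,\br)$ gives a limit $g$, Lemma~\ref{singular} (or the non-singular contradiction you note) gives $K(g)\cap\Omega=\emptyset$, and the uniform convergence of $g_i$ on compacta away from $K(g)$ turns $g_i(k_i)\to p$ into $g_i(k)\to g(k)=p$. All hypotheses of the cited ingredients are met ($g_i\in G\subset\Aut_{\mathrm{proj}}(\Omega)$, $\Omega$ properly convex), so the argument stands as written.
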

\begin{lemma}\label{supker}Let $\Omega$ be a properly convex projective domain and $\{\phi _n\} \subset \Aut
	(\Omega)$ a sequence converging to $\phi \in \mbox{PM}(n+1,\mathbb
	R)$
	with kernel $K(\phi)$ and range $R(\phi)$. Suppose
	$K(\phi)=\langle K(\phi)\cap \partial \Omega \rangle$, i.e.,
	$K(\phi)$ is a support of some face of $\Omega$. Then $K(\phi)\cap
	\partial \Omega $ is a {closed} conic face of $\Omega$.
\end{lemma}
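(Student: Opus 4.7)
My plan is to prove the lemma in two stages: first identify $K(\phi)\cap\partial\Omega$ as a single closed face $\overline F$ of $\Omega$, and then use the preimage correspondence $H'\mapsto\phi^{-1}(H')$ between hyperplanes of $R(\phi)$ and hyperplanes of $\RP^n$ containing $K(\phi)$ to produce the chain of supporting hyperplanes witnessing that $F$ is conic.

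For the first stage, Lemma \ref{singular} gives $K(\phi)\cap\Omega=\emptyset$ and $K(\phi)$ a supporting subspace, so $K(\phi)\cap\partial\Omega=K(\phi)\cap\overline\Omega$ is a convex subset of $\partial\Omega$ in any affine chart in which $\overline\Omega$ is bounded. I pick $p$ in its relative interior (relative to its affine span, which by hypothesis is all of $K(\phi)$) and let $F$ be the face of $\Omega$ containing $p$. For any $q\in K(\phi)\cap\partial\Omega$ with $q\neq p$, the projective line through $p$ and $q$ lies inside $K(\phi)$, so its intersection with $\overline\Omega$ is a closed segment $[r_1,r_2]\subset K(\phi)\cap\partial\Omega$; the relative-interior property places $p$ strictly between $r_1$ and $r_2$, so the open segment $(r_1,r_2)\subset\partial\Omega$ lies in $F$ by the face equivalence, forcing $q\in\overline F$. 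This yields $K(\phi)\cap\partial\Omega\subset\overline F$. Conversely, since $K(\phi)$ is a supporting subspace meeting $F$ at $p$, we have $\langle F\rangle\subset K(\phi)$, hence $\overline F\subset K(\phi)\cap\partial\Omega$. Combined with the hypothesis, these inclusions also give $K(\phi)=\langle F\rangle$.

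For the second stage, let $k=\dim F$ and $F':=\phi(\Omega)$, which by Lemma \ref{singular} is a face of $\Omega$ with $\langle F'\rangle=R(\phi)$ of projective dimension $n-k-1$, and which is properly convex in $R(\phi)$ because in any affine chart in which $\overline\Omega$ is bounded, $F'$ is a bounded convex open subset of $R(\phi)$. The preimage map $H'\mapsto\phi^{-1}(H')$ is a bijection between hyperplanes of $R(\phi)$ and hyperplanes of $\RP^n$ containing $K(\phi)$; it commutes with intersections and raises projective dimension by $k+1$, so a chain of hyperplanes in $R(\phi)$ with empty common intersection lifts to a chain of hyperplanes in $\RP^n$ with common intersection exactly $K(\phi)=\langle F\rangle$. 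Moreover $\phi^{-1}(H')\cap\Omega=\emptyset$ iff $H'\cap F'=\emptyset$, and $\phi^{-1}(H')$ already contains $\overline F\subset\partial\Omega$, so such a lift is automatically a supporting hyperplane of $\Omega$. It then suffices to produce $n-k$ supporting hyperplanes of $F'$ in $R(\phi)$ in general position, which exist because the dual of the properly convex domain $F'$ contains $n-k$ extreme points in linearly general position.

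The subtle point, I expect, is the line-segment argument in Step 1: one must use that a projective line through two points of $K(\phi)$ lies entirely in $K(\phi)$, so its intersection with $\overline\Omega$ is automatically contained in $K(\phi)\cap\partial\Omega$, enabling the face equivalence on the resulting open segment through the relative-interior point $p$. Once that is in place, Step 2 is essentially formal, reducing conicity of $F$ to the standard convex-geometric fact that a properly convex domain in projective space admits enough supporting hyperplanes in general position.
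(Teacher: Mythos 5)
Your proof is correct and follows essentially the same route as the paper's: the paper projects $\Omega$ along $K(\phi)$ to a complementary subspace (equivalently, pulls back hyperplanes of $R(\phi)$ under $\phi$, as you do), notes the image is properly convex and hence bounded by $n-k$ hyperplanes in general position forming a simplex, and lifts these to the supporting hyperplanes required by the conic condition. The only difference is your preliminary verification that $K(\phi)\cap\partial\Omega$ is the closure of a single face, a point the paper's proof takes for granted from the hypothesis.
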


\begin{proof}
	Let  $k$ be the dimension of $K(\phi)$ and $F$ be the $k$-dimensional face of $\Omega$ such that $\overline{F}=K(\phi)\cap
	\partial \Omega $. We can choose a
	complementary subspace $L$ of dimension $(n-k-1)$, that is,
	the space generated by $K(\phi)$ and $L$ is the whole $\rp{n}$. Let $\rho$ be a projection from $\rp{n}$ to
	$L$. Then $\phi$ can be considered as a projective transformation
	from $L=\rho(\rp{n})$ to $R(\phi)$, since $\phi$ maps the
	projective space generated by $K(\phi)$ and $y \in L$ to one point
	$\phi(y)$ in $R(\phi)$. The fact that $K(\phi)$ does not intersect
	$\Omega$ implies that $\phi(x)=\lim_{n \to \infty} \phi _n(x)$
	must be contained $R(\phi)\cap\partial\Omega$ for all $x\in
	\Omega$. Since $\Omega$ is properly convex,
	$R(\phi)\cap\partial\Omega$ is also properly convex and so
	$\rho(\Omega)$ is properly convex in $L$. Therefore $\rho(\Omega)$
	is bounded by $(n-k)$ number of $(n-k-2)$-planes $\{
	{H_L}^1,\dots,{H_L}^{n-k} \}$ of $L$ which bound a
	$(n-k-1)$-simplex. If we let $H_i$ be the $(n-1)$-plane generated by $K(\phi)$ and ${H_L}^i$ for each $i$, then $\{H_i\}$ are hyperplanes of
	$\rp{n}$ which satisfies (\ref{conic-hyperplanes}). Therefore 
	$F$ is a conic face of
	$\Omega$.
	
\end{proof}


\section{Limit set}

\begin{Def}Let $\Omega\neq \br^n
$ be a domain in $\br^n$ and $G < \Aut(\Omega)$.
A limit set $\Lambda_G \subset \br^n $ of $G$ is
$$\cup_{x\in \Omega}(\overline{Gx}\cap \partial \Omega)$$
\end{Def}

\begin{lemma}\label{inv}Let $\Omega$ be a properly convex
 affine domain in $\br^n$. Suppose
$H $ is a normal subgroup of $\Aut(\Omega)< \text{Aff}(n,\br)$. Then
$\Aut(\Omega)$ leaves $\Lambda_H$ invariant.
\end{lemma}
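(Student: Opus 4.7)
The plan is to unpack the definition of $\Lambda_H$ and push a convergent sequence forward by $g$, using normality of $H$ to stay inside $H$.

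Given $g \in \Aut(\Omega)$ and $p \in \Lambda_H$, by definition there is $x \in \Omega$ and a sequence $\{h_n\} \subset H$ with $h_n x \to p \in \partial\Omega$. I would let $y := g(x) \in \Omega$ (this is in $\Omega$ since $g$ preserves $\Omega$) and consider the conjugates $h_n' := g h_n g^{-1}$. Normality of $H$ in $\Aut(\Omega)$ gives $h_n' \in H$. The key algebraic identity is
\[
h_n'(y) \;=\; g h_n g^{-1}(g x) \;=\; g(h_n x).
\]

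Next I would invoke continuity: $g$ is an affine transformation of $\br^n$, hence continuous on all of $\br^n$, so $h_n x \to p$ implies $h_n'(y) = g(h_n x) \to g(p)$. Since $g \in \Aut(\Omega)$ is a homeomorphism of $\br^n$ with $g(\Omega) = \Omega$, it also maps $\partial\Omega$ to $\partial\Omega$, so $g(p) \in \partial\Omega$. Therefore $g(p) \in \overline{Hy} \cap \partial\Omega \subset \Lambda_H$. This shows $g(\Lambda_H) \subset \Lambda_H$, and applying the same argument to $g^{-1}$ yields the reverse inclusion, hence invariance.

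There is no real obstacle here; the only thing to watch is that the boundary $\partial\Omega$ in the definition is taken in $\br^n$ (consistent with $\Lambda_H \subset \br^n$), so that continuity of the affine map $g$ on $\br^n$ directly yields $g(p) \in \partial\Omega$ from $p \in \partial\Omega$, without needing to worry about points escaping to infinity or subtleties of the projective closure.
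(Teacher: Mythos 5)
Your proof is correct and follows essentially the same route as the paper's: conjugate the sequence by $g$, use normality to stay in $H$, and use continuity of the affine map $g$ to get $g h_n g^{-1}(g(x)) = g(h_n x) \to g(p)$. If anything, your version is slightly cleaner, since you invoke continuity of $g$ directly rather than passing through a limit transformation, and you explicitly record that $g(p)$ stays in $\partial\Omega\cap\br^n$.
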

\begin{proof}
Suppose $\xi$ is a point of $\Lambda_H$ and $g$ is an element of
$\Aut(\Omega)$. Then there exists a sequence $\{h_i\}$ of $H$ and
a point $x_0$ of $\Omega$ such that $h_i(x_0)$ converges to $\xi$.
If $y_0=g(x_0)$, then $gh_ig^{-1}(y_0)$ converges to $g(\xi)$
since
$$\lim_{i\rightarrow \infty} gh_ig^{-1}(y_0)=ghg^{-1}(y_0)=gh(x_0)=g(\xi).$$ This implies that $g(\xi)$ is
contained in $\Lambda_H$ and thus we can conclude that $\Aut(\Omega)$ leaves $\Lambda_H$ invariant.
\end{proof}
{Note that  by Lemma \ref{singular} the limit set $\Lambda_H$ is the disjoint union of the boundary face of $\Omega$ whose support is the range space of some $h\in \overline{H}-H$.}

\begin{lemma}\label{closed}
Let $\Omega$ be a properly convex quasi-homogeneous affine
domain in $\br^n$. Suppose that $G=\Aut(\Omega)$ is irreducible and the limit set $\Lambda_{G^0}$  of the identity component of $G$ is nonempty.
Then $\overline{CH(\Lambda_{G^0})}=CH(\overline{\Lambda_{G^0}})=\overline\Omega$ and $\overline{\Lambda_{G^0}}$  contains all the extreme points.
\end{lemma}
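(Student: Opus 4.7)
The plan is to define $X := \overline{CH(\Lambda_{G^0})}$, computed in an affine chart in which $\overline{\Omega}$ is bounded (such a chart exists because $\Omega$ is properly convex), and to show $X=\overline{\Omega}$; the extreme-point assertion will then follow from Milman's converse to Krein--Milman. The first observation I would make is that, since $G^0$ is normal in $G=\Aut(\Omega)$, Lemma \ref{inv} tells us that $G$ leaves $\Lambda_{G^0}$ invariant, so $G$ also leaves $\overline{\Lambda_{G^0}}$ and $X$ invariant; and since $\overline{\Lambda_{G^0}}$ is compact, its convex hull in finite dimensions is automatically closed, which gives the equality $\overline{CH(\Lambda_{G^0})}=CH(\overline{\Lambda_{G^0}})$ for free.

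Next I would establish that $X$ has full dimension in $\rp{n}$. Let $L$ be the projective span of $\overline{\Lambda_{G^0}}$; by the preceding paragraph $L$ is $G$-invariant. By hypothesis $\Lambda_{G^0}$ is nonempty, and by the definition in the text $\Lambda_{G^0}\subset \br^n$, so $L\cap \br^n$ is a nonempty affine subspace of $\br^n$. If $L$ were a proper projective subspace of $\rp{n}$, then $L\cap \br^n$ would be a nonempty proper affine subspace of $\br^n$ invariant under $G\subset \text{Aff}(n,\br)$, contradicting the irreducibility hypothesis on $G$. Hence $L=\rp{n}$, so the convex set $X$ has nonempty interior in $\rp{n}$, and since $X\subset \overline{\Omega}$ with $\Omega$ an open convex set, this interior is contained in $\Omega$.

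At this point I would pick any $y \in X^{\circ}\subset \Omega$ and invoke Proposition \ref{Vey}(ii). By $G$-invariance $Gy\subset X$, and since $X$ is closed and convex this gives $\overline{CH(Gy)}\subset X$; but Proposition \ref{Vey}(ii) says $CH(Gy)=\Omega$, so $\overline{\Omega}\subset X$. Combined with the obvious inclusion $X\subset \overline{\Omega}$ (which is immediate from $\Lambda_{G^0}\subset \partial\Omega$ and the convexity of $\overline{\Omega}$), this yields $X=\overline{\Omega}$. For the extreme-point statement, since $\overline{\Omega}$ is a compact convex set in the affine chart and coincides with the convex hull of the compact set $\overline{\Lambda_{G^0}}$, Milman's partial converse to the Krein--Milman theorem forces every extreme point of $\overline{\Omega}$ to belong to $\overline{\Lambda_{G^0}}$.

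The main obstacle I anticipate is concentrated in the dimension step: the whole argument hinges on knowing that $\Lambda_{G^0}$ contains a finite (non-ideal) point so that the invariant projective span $L$ meets $\br^n$ and the affine irreducibility of $G$ can be used. This point is built into the definition of $\Lambda_G$, so once one has recorded it the rest of the proof assembles cleanly from Lemma \ref{inv} and Proposition \ref{Vey}(ii), without any need to probe the structure of individual faces of $\Omega$.
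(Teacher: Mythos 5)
Your proof is correct and follows essentially the same route as the paper's: use normality of $G^0$ and Lemma \ref{inv} to make $CH(\Lambda_{G^0})$ a $G$-invariant convex set, use irreducibility to force it to meet $\Omega$, apply Proposition \ref{Vey}(ii) to the $G$-orbit of a point in that intersection to get $\overline{CH(\Lambda_{G^0})}=\overline\Omega$, and deduce the extreme-point claim from Milman's converse to Krein--Milman. You have merely spelled out in more detail the steps the paper leaves implicit (the span argument for nonempty intersection with $\Omega$ and the closedness of the hull of a compact set).
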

\begin{proof}
Since $G$ is irreducible and preserves $\Lambda_{G^0}\neq \emptyset$,
$CH(\Lambda_{G^0})\cap \Omega\neq\emptyset$.
Now we get from Proposition \ref{Vey}
$$\overline{CH(\Lambda_{G^0})}=CH(\overline{\Lambda_{G^0}})=\overline\Omega$$
 by considering $G$-orbit of a point $x \in CH({\Lambda_{G^0}})\cap \Omega$.
Hence every extreme point is  in $\overline{\Lambda_{G^0}}$, that is, $S\subset \overline{\Lambda_{G^0}}.$
\end{proof}

We will see in Proposition \ref{homo} that under the assumptions in Lemma \ref{closed}, the domain is homogeneous. Hence  the limit set $\Lambda_{G^0}$  is $\partial\Omega$ containing all the extreme points.



\section{Proof of Theorem \ref{Markus}}

To prove theorem \ref{Markus} we first show the following two propositions.

\begin{Prop}\label{homo}
Let $\Omega$ be a properly convex
quasi-homogeneous affine domain in  $\br^n$ which has an irreducible $G=\Aut(\Omega)$ and a nonempty $\Lambda_{G^0}$.
Then $\Omega$ is a homogeneous affine domain.
\end{Prop}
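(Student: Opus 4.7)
My plan is to upgrade the quasi-homogeneous action of $G=\Aut(\Omega)$ to a transitive one by first showing that $G^0$ acts transitively on the set $S$ of asymptotic cone points, and then using the asymptotic foliation of $\Omega$ (Theorem \ref{thm-folliation}) to propagate that transitivity to all of $\Omega$.

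First I would combine Lemma \ref{closed} with the face-structural description of $\Lambda_{G^0}$ provided by Lemma \ref{singular}: under our hypotheses $\overline{\Lambda_{G^0}} \supset S$ and $\overline{CH(\Lambda_{G^0})} = \overline{\Omega}$, and $\Lambda_{G^0}$ is itself a disjoint union of boundary faces $g(\Omega)$ for singular limits $g$ of sequences in $G^0$. Since extreme points lie in the closure and limits of extreme points remain extreme (Lemma \ref{extreme}), a subsequence/range analysis in $\PM(n+1,\br)$ should produce a singular $g\in\overline{G^0}\setminus G^0$ whose range $R(g)$ is zero dimensional, hence an extreme point actually lying in $\Lambda_{G^0}$. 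Once one extreme point lies in $\Lambda_{G^0}$, Lemma \ref{inv} ($G$-invariance of $\Lambda_{G^0}$) combined with the $G$-transitivity on $S$ from Theorem \ref{transitive} forces the whole of $S$ into $\Lambda_{G^0}$.

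With $S \subset \Lambda_{G^0}$, the $G^0$-orbits inside $S$ are permuted transitively by $G/G^0$, so they are all congruent and of the same dimension. A connectedness argument, using that $G^0$-orbits are connected submanifolds and that the asymptotic foliation identifies $S$ with the leaf space, then forces $G^0$ to act transitively on $S$. The asymptotic foliation now gives: the leaves $\xi + \AC^\circ(\Omega)$ for $\xi \in S$ are permuted transitively by $G^0$, and it only remains to show that $H^0 := \mathrm{Stab}_{G^0}(\xi_0)$ acts transitively on a single leaf $L = \xi_0 + \AC^\circ(\Omega)$. If $\dim \AC^\circ(\Omega)<n$, this follows by induction on dimension, with $\AC^\circ(\Omega)$ inheriting quasi-homogeneity from Jo's result, irreducibility from that of $G$, and a nonempty identity-component limit set pushed down from $\Lambda_{G^0}$. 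If instead $\dim \AC^\circ(\Omega)=n$, then by Vey's theorem $\Omega$ is itself a cone with vertex $\xi_0$, $S=\{\xi_0\}$, and the whole group $G$ fixes $\xi_0$; one then concludes transitivity directly by producing one-parameter subgroups in $G^0$ from the sequences witnessing $\Lambda_{G^0}\ne\emptyset$, via Lemma \ref{singular} and the syndetic action on $\Omega$.

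The principal difficulty, I expect, is the transition from $S\subset\overline{\Lambda_{G^0}}$ all the way to $G^0$-transitivity on $S$: the face decomposition of $\Lambda_{G^0}$ \emph{a priori} allows extreme points to appear only as accumulation of higher-dimensional faces, so showing that some singular $g\in\overline{G^0}\setminus G^0$ has a zero-dimensional range requires controlling which ranges $R(g)$ actually arise, and how they distribute along $\partial\Omega$ under the $G$-action on $\Lambda_{G^0}$. A second delicate point is checking that the inductive hypotheses do descend to $\AC^\circ(\Omega)$ cleanly, so that the induction does not quietly beg the question; if that reduction is too fragile, it may be preferable to replace the induction in the final step by a direct analysis inside the stabilizer $H^0$ using Theorem \ref{transitive} applied to $\AC^\circ(\Omega)$.
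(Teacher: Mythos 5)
Your overall decomposition --- transitivity of $G^0$ on the set $S$ of asymptotic cone points, plus transitivity of the stabilizer of a cone point on the corresponding leaf $\xi_0+\AC^\circ(\Omega)$ of the asymptotic foliation --- is a legitimate reformulation of homogeneity, and the first half can be made to work, in fact more easily than you suggest: since $G$ acts transitively on the connected, locally compact set $S$ by Theorem \ref{transitive} and $G^0$ is open in $G$, the $G^0$-orbits are open in $S$ and connectedness alone gives transitivity, so you never need $S\subset\Lambda_{G^0}$ and the zero-dimensional-range difficulty you flag in your first paragraph can simply be bypassed. The genuine gap is in the second half, and as written it is fatal. The leaf $L=\xi_0+\AC^\circ(\Omega)$ is a properly convex \emph{cone}, and the group acting on it is the stabilizer of $\xi_0$, which fixes the cone point and hence preserves the proper affine subspace $\{\xi_0\}$: the irreducibility hypothesis never descends to $L$, so the inductive hypothesis of your own proposition fails at the first step of the induction. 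Nor can the induction be repaired by weakening the hypotheses, because quasi-homogeneous properly convex cones need not be homogeneous (cones over divisible, non-homogeneous strictly convex projective domains), so \emph{some} hypothesis beyond quasi-homogeneity of $\AC^\circ(\Omega)$ must be transported to $L$, and you have not identified one that survives the passage to a cone. Your fallback --- Theorem \ref{transitive} applied to $\AC^\circ(\Omega)$ --- only gives transitivity on the extreme points of the cone, i.e.\ on its vertex, which says nothing about its interior. (Note also that under the stated hypotheses $\dim\AC(\Omega)<n$ automatically, since a cone's automorphism group fixes the vertex and would be reducible; so your $\dim\AC^\circ(\Omega)=n$ case is vacuous rather than in need of a separate argument.)

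The paper's proof concentrates all of its effort exactly where your outline is empty. From sequences $g_n\in G^0$ with $g_nx\to p\in E\subset\Lambda_{G^0}$ it extracts limiting one-parameter subgroups $g_E(t)=\exp(t\eta)\subset G^0$ with $g_E(t)x\to p$, each non-elliptic and preserving a supporting hyperplane, and then shows by an orbit/horosphere intersection analysis, run by adjoining faces $E_1,E_2,\dots,E_k$ of $\Lambda_{G^0}$ one at a time, that the subgroup these flows generate already has orbit equal to all of $\Omega$. That is where the hypothesis $\Lambda_{G^0}\neq\emptyset$ is actually consumed; in your proposal it is used only to set up a reduction whose hard case --- transitivity of $\mathrm{Stab}_{G^0}(\xi_0)$ on the leaf --- is left unresolved, and I do not see how to resolve it without essentially rebuilding the paper's one-parameter subgroup construction inside that stabilizer.
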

\begin{proof}
Consider an orbit $G^0x$ of a point $x\in\Omega$. Since $G^0x$ accumulates at every face of $\Omega$ which is in 
$\Lambda_{G^0}$, we can find a one-parameter subgroup $g_E(t)$ for each face $E \subset \Lambda_{G^0}$ which satisfies the following: $g_E(t)$ is parabolic or hyperbolic, $E$ is fixed by $g_E(t)$,
$g_E(t)$ preserves a supporting hyperplane $H^E$ which contains $E$ for all $t$,
and
$g_E(t)x$ converges to a point of $E$ as $t$ goes to $\infty.$

We denote the set of fixed points of  $g_E(t)$ by $F_E$ for each face $E \subset \Lambda_{G^0}$,  {and especially if $g_E(t)$ is hyperbolic, we denote $F_E^+$ ($F_E^-$, respectively) the set of attracting fixed points (repelling fixed points, respectively) and $F_E^0$ is the set of the remaining fixed points.}    
We call the horospheres centered on $(H^E,p), p\in E$ as  $\{g_E(t)\}$-horospheres. Note that these horospheres make a foliation of $\Omega$.

Such a one-parameter subgroup $g_E(t)$ can be constructed as follows.  Choose $g_n\in G^0$ such that $g_nx\ra p\in E$.  Let $g_n(t)=\exp(t \eta_n),\ \eta_n \in \mathfrak{g}^0$, be a one-parameter subgroup in $G^0$ so that $g_n(t_n)=g_n$ for $t_n\ra\infty$. Here we normalize $\eta_n$ so that its norm is 1 with respect to a Killing form on $\text{SL}(n+1,\mathbb R)$. Since the set of directions $\mathfrak{g}^0\cap {\bold S}$, where ${\bold S}$ is a unit sphere in $\mathfrak{sl}(n+1,\mathbb R)$, is compact, we can pass to a subsequence so that  $\eta_n\ra\eta$. Set $g_E(t)=\exp(t \eta)$. Then the one-parameter subgroups $g_n(t)$ converges to $g_E(t)$ and
\begin{equation*}
p=\lim_{n\ra\infty} g_nx=\lim_{n\ra\infty} g_n(t_n)x=\lim_{n\ra\infty} \exp(t_n\eta_n)x=\lim_{t\ra\infty} \exp(t \eta) x=\lim_{t\ra\infty}g_E(t)x.
\end{equation*}
Obviously $g_E(t)$ is not elliptic for each $t$  and thus it preserves a supporting hyperplane $H_t^E$ to $\Omega$ at $p$ by Lemma 2.3 of \cite{CLT}. Since one parameter subgroup is abelian, it preserves a common supporting hyperplane $H^E$.

By Lemma \ref{closed}, we can choose two distinct faces $E_1, E_2$ in $\Lambda_{G^0}$ any of which is not contained in the closure of the other. Let $H_{E_1E_2}$ be the subgroup of $G^0$ which is generated by $g_{E_1}(t)$ and $g_{E_2}(s)$.
 If $\Omega$ is a $2$-dimensional domain, then we can prove $H_{E_1E_2}x=\Omega$ as follows: If
 both $g_{E_1}(t)$ and $g_{E_2}(s)$ are hyperbolic and an arbitrary pair of two orbits  $\{ g_{E_1}(t)x\,|\, t\in\mathbb R\}$ and $\{g_{E_2}(s)y\,|\, s\in\mathbb R\}$  intersect transversely, then $H_{E_1E_2}x$ must be equal to

$$S_{12}=\bigcup_sg_{E_2}(s)\{ g_{E_1}(t)x\,|\, t\in\mathbb R\}=\{ g_{E_2}(s)g_{E_1}(t)x\,|\, t\in\mathbb R, s\in\mathbb R\}, $$
 and fully covers $\Omega$.
 For the case that a one-parameter subgroup $g_{E_1}(t)$ is hyperbolic and another one $g_{E_2}(s)$ is parabolic with {$(F_{\xi_1}^+\cup F_{E_1}^-) \cap F_{E_2} \neq \emptyset$}, we get $S_{12}=H_{E_1E_2}x=\Omega$ since each $g_{E_1}(t)$-orbit meets every $g_{E_2}(s)$-horospheres.

  If the case is not the above two, we can show that
$$S_{123}=\{ g_{E_1}(u)g_{E_2}(s)g_{E_1}(t)x\,|\, t, s, u\in\mathbb R\}$$
is equal to $H_{E_1E_2}x=\Omega$, considering various possibilities. For example, if  $g_{E_1}(t)$ is hyperbolic and $g_{E_2}(s)$ is parabolic with {$(F_{E_1}^+\cup F_{E_1}^-) \cap F_{E_2} = \emptyset$}, then there is a $g_{E_2}(s)$-horosphere, $S_z$,  which meets tangentially an orbit $g_{E_1}(t)x$
and thus $S_{12}$ becomes one of two components of $\Omega-S_z$. This implies that $$\Omega=\bigcup_{u\in\mathbb R}g_{E_1}(u)S_{12}=S_{123}.$$

If the dimension of $\Omega$ is greater than $2$, $H_{E_1E_2}x$ may not be equal to $\Omega$, but $\partial (H_{E_1E_2}x)\subset \partial\Omega$.  In this case, we can find another face $E_3\subset\Lambda_{G^0} $ outside $\overline{H_{E_1E_2}x}$ since $\overline\Omega=\overline{CH({\Lambda_{G^0}})}$ by Lemma \ref{closed}.  Let $H_{E_1E_2E_3}$ be the subgroup of $G^0$ which is generated by $g_{E_1}(t)$,  $g_{E_2}(s)$,  and $g_{E_3}(u)$. Now considering the action of the one parameter subgroup $\{g_{E_3}(u)\}$ we can show that $$\Omega=H_{E_1E_2E_3}x,$$ if the dimension of $\Omega$ is $3$. Actually $\Omega=H_{E_1E_2E_3}x$ is equal to 
$$\bigcup_u g_{E_3}(u)H_{E_1E_2}x \quad \text{or} \quad\bigcup_{h\in H_{E_1E_2}}\bigcup_{u,v} h(v)g_{E_3}(u)H_{E_1E_2}x.$$
For the case that the dimension of $\Omega$ is greater than $3$ and $\Omega\neq H_{E_1E_2E_3}x$,  we can choose another face $E_4\subset\Lambda_{G^0} $ outside $\overline{H_{E_1E_2E_3}x}$. By repeating this process, we can conclude that $G^0x=H_{E_1E_2\dots E_k}x=\Omega$ for some natural number $k\leq n$ and thus  $G^0$ acts on $\Omega$ transitively.
\end{proof}

\begin{Prop}\label{algebraic}
Let $\Omega (\neq \br^n) $ be a convex
quasi-homogeneous affine domain in  $\br^n$ which has an irreducible $G=\Aut(\Omega)$ and a nonempty $\Lambda_{G^0}$.  Then $\Omega$ is a semi-algebraic subset of $\br^n$.
\end{Prop}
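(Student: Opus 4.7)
The plan is to realize $\Omega$ as a single connected component of the complement of a Zariski-closed subset of $\br^n$. By Proposition \ref{homo}, $\Omega = G^0 \cdot x_0$ is homogeneous for any $x_0 \in \Omega$ and, being a full-dimensional $G^0$-orbit, is open in $\br^n$. Let $H$ denote the Zariski closure of $G^0$ inside $\Aff(n,\br)$; $H$ is a real algebraic group in which $G^0$ sits as a Zariski-dense Lie subgroup. By Chevalley's theorem the orbit $H \cdot x_0$ is Zariski locally closed, hence semi-algebraic, in $\br^n$; since it contains the open set $\Omega$, it is Zariski dense, so it is Zariski open: $H \cdot x_0 = \br^n \setminus Z$ for some proper Zariski-closed subset $Z$.

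The main claim I would target is that $\partial \Omega \subseteq Z$. Granted this, $\Omega$ is open and closed in $\br^n \setminus Z$, and being connected it coincides with a single connected component of the semi-algebraic set $\br^n \setminus Z$. Since connected components of semi-algebraic sets are themselves semi-algebraic, $\Omega$ is semi-algebraic. To prove $\partial \Omega \subseteq Z$, consider the Zariski closure $W$ of $\partial \Omega$. Because $\partial \Omega$ is $G^0$-invariant and $G^0$ is Zariski dense in $H$, $W$ is $H$-invariant (the stabilizer $\{h \in H : h \cdot W = W\}$ is Zariski closed in $H$ and contains $G^0$, hence equals $H$). Then $W \cap (\br^n \setminus Z)$ is an $H$-invariant subset of the homogeneous space $\br^n \setminus Z$; by transitivity of the $H$-action, it is either empty---yielding $W \subseteq Z$ and hence $\partial \Omega \subseteq Z$, as desired---or all of $\br^n \setminus Z$, in which case $W = \br^n$.

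The principal obstacle is to rule out this second alternative, namely that $\partial \Omega$ is Zariski dense in $\br^n$. The strategy is to combine convexity with the structural information from the proof of Proposition \ref{homo}: for each face $E \subseteq \Lambda_{G^0}$ there is an affine supporting hyperplane $H^E$ of $\Omega$ preserved by a one-parameter subgroup $g_E(t) \subseteq G^0$. By Lemma \ref{closed} the closure $\overline{\Lambda_{G^0}}$ contains every extreme point of $\Omega$, and by Theorem \ref{transitive} the action of $G$ on asymptotic cone points is transitive. Consequently $\partial \Omega$ is contained in a finite union of $H$-orbits of such supporting hyperplanes, which is a proper algebraic subset of $\br^n$, contradicting $W = \br^n$.
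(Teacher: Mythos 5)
Your overall strategy --- exhibit $\Omega$ as a connected component of a semi-algebraic set attached to the Zariski closure $H$ of $G^0$ --- is an attempt to reprove the result the paper simply quotes (Proposition 2.15 of \cite{GH2}: the developing map of a homogeneous affine manifold covers a semi-algebraic open set); the paper's proof consists of reducing to the properly convex case by splitting off an $\br^k$-factor (a step you skip, and which you need because Proposition \ref{homo} is stated for \emph{properly} convex domains) and then citing that result. But two steps of your argument are genuinely broken. First, over $\br$ the orbit $H\cdot x_0$ of a real algebraic group is \emph{not} Zariski locally closed in general: Chevalley-type local closedness of orbits is a statement over algebraically closed fields, and the real orbit is only a union of Euclidean connected components of the real points of the algebraic orbit --- hence semi-algebraic by Tarski--Seidenberg, but nothing more. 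Already for $\Omega=\{y>x^2\}\subset\br^2$ one computes $H\cdot x_0=\Omega$ itself, which is Zariski dense but not Zariski open; so your identity $H\cdot x_0=\br^n\setminus Z$ with $Z$ Zariski closed fails, and the frame ``$\Omega$ is a component of $\br^n\setminus Z$'' collapses.

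Second, and more seriously, the step you yourself identify as the crux --- ruling out that $\partial\Omega$ is Zariski dense --- is not proved. The union of the supporting hyperplanes of a convex domain is typically Zariski dense (for the parabola it is $\{y\le x^2\}$, whose Zariski closure is all of $\br^2$), an $H$-orbit of a hyperplane is in general neither algebraic nor proper, and no reason is given why finitely many such orbits should cover $\partial\Omega$ (the transitivity in Theorem \ref{transitive} concerns only extreme points, not all of $\partial\Omega$). So the containment of $\partial\Omega$ in a proper algebraic set is exactly what needs proof and is instead assumed. A repair along your lines exists but must avoid both pitfalls: $H\cdot x_0$ is semi-algebraic by Tarski--Seidenberg; $G^0$ is normal in $H$ (its Lie algebra is normalized by a Zariski-closed subgroup containing $G^0$, hence by $H$), so every $G^0$-orbit inside $H\cdot x_0$ is a translate $h\Omega$ and in particular open; the $G^0$-orbits therefore partition $H\cdot x_0$ into open sets, each of which is also closed in $H\cdot x_0$, and the connected set $\Omega$ is a connected component of the semi-algebraic set $H\cdot x_0$, hence semi-algebraic --- with no discussion of $\partial\Omega$ or of Zariski openness required.
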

\begin{proof}

Note that if $\Omega$ has a complete line then there exist a natural number $k<n$ and a  $n-k$ dimensional properly convex domain $\Omega'$ such that $\Omega=\mathbb R^{k}\times \Omega'$, which comes from the convexity of $\Omega$. We see immediately that
{$G'=\Aut(\Omega')$ is irreducible if $G=\Aut(\Omega)$ is irreducible},
$\Lambda_{G^0}\neq\emptyset$ if and only if $\Lambda_{G'^0}\neq\emptyset$, and
$\Omega$ is a semi-algebraic subset of $\br^n$ if and only if $\Omega'$ is a semi-algebraic subset of $\br^{n-k}$. So we just need to prove our proposition for a properly convex domain $\Omega$.

Goldman and Hirsch proved in Proposition 2.15 of \cite{GH2} that the developing map of a homogeneous affine manifold is a covering onto a semi-algebraic open set. So we can conclude that $\Omega$ is a semi-algebraic subset of $\br^n$, because $\Omega$ is a homogeneous affine domain by Proposition \ref{homo}.
\end{proof}

Now we prove our main theorem.
\begin{proof}[Proof of Theorem \ref{Markus}]
For a compact affine $n$-manifold $M$ with parallel volume, it is well-known that the affine holonomy $\Gamma$ is irreducible and it preserves no (semi)-algebraic subset of $\br^n$. (See \cite{GH1} and \cite{GH2} for references.) So if a compact convex affine manifold $M= \Omega/\Gamma$ has a parallel volume, the affine automorphism group $G=\Aut(\Omega)$ of the developing image $\Omega\subset \br^n$  is irreducible. Hence either $\Omega$ has no boundary, i.e., $\Omega=\br^n$, or $\Lambda_{G^0}$ must be empty by Proposition \ref{homo} and Proposition \ref{algebraic}, which completes the proof for Theorem \ref{Markus}.

\end{proof}


\section{Markus conjecture in dimension $\leq 5$ }\label{dim5}
In the previous section, we proved the Markus conjecture in the affirmative for a convex affine manifold $M$ under the condition of $\Lambda_{\Aut^0(D(\tilde{M}))}\neq \emptyset$.   In this section, we will see that the Markus conjecture is true for dimension $\leq 5$, by proving that the condition of $\Lambda_{\Aut^0(D(\tilde{M}))}\neq \emptyset$ holds if the affine manifold is not complete. To see this we need to prove that 
 every divisible convex affine domain has a nonempty limit set by the identity component of its  affine automorphism group if it is not the whole affine space. 
\begin{theorem}
Let $\Omega$ be a divisible convex affine domain in $\mathbb R^n$ with $n\leq 5$. Then $\Lambda_{\Aut^0(\Omega)}\neq \emptyset$ if $\Omega \neq \mathbb R^n$.
\end{theorem}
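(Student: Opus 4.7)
The plan is to reduce to the properly convex case and invoke Vey's Theorem \ref{VeyThm}, which forces any divisible properly convex affine domain to be a cone; dilations about the vertex then supply a one-parameter subgroup of $\Aut^0(\Omega)$ whose orbits accumulate at that vertex.

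First I would dispose of the case that $\Omega$ is properly convex. By Theorem \ref{VeyThm}, after an affine change of coordinates $\Omega$ is a cone with vertex $0 \in \partial \Omega$. The subgroup $\{\lambda\,\mathrm{Id}\}_{\lambda>0}$ is contained in $\Aut^0(\Omega)$, and for any $x \in \Omega$ one has $\lambda x \to 0$ as $\lambda \to 0^+$. Hence $0 \in \Lambda_{\Aut^0(\Omega)}$ and the proof is complete in this case.

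Next, suppose $\Omega$ contains a complete line. By convexity, $\Omega = \mathbb{R}^k \times \Omega'$ in a canonical decomposition with $\Omega' \subset \mathbb{R}^{n-k}$ properly convex, $1 \leq k < n$, $n - k \leq 4$, and $\Omega' \neq \mathbb{R}^{n-k}$ (since $\Omega \neq \mathbb{R}^n$). Because the lineality subspace is canonically attached to $\Omega$, any $\Gamma \subset \Aff(n,\mathbb{R})$ dividing $\Omega$ preserves the splitting and projects to a subgroup $\Gamma' \subset \Aut(\Omega')$. A compact fundamental domain for $\Gamma$ on $\Omega$ projects to a compact $K' \subset \Omega'$ with $\overline{\Gamma'}\, K' = \Omega'$, so $\Omega'$ is quasi-homogeneous. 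Using the dimensional restriction $\dim \Omega' \leq 4$ together with the classification of divisible properly convex projective domains in dimensions $\leq 3$ and Benz\'ecri's Theorem \ref{prop-benz}, I would argue that $\Omega'$ must in fact be a cone. Lifting the dilations $y \mapsto \lambda y$ of $\Omega'$ to $\Aut^0(\Omega)$ as $(x,y) \mapsto (x,\lambda y)$ then produces a one-parameter subgroup whose orbit of any $(x_0, y_0) \in \Omega$ accumulates at $(x_0, v) \in \mathbb{R}^k \times \{v\} \subset \partial \Omega$, where $v$ is the vertex of $\Omega'$. Hence $\Lambda_{\Aut^0(\Omega)} \neq \emptyset$.

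The main obstacle is precisely showing that the properly convex factor $\Omega'$ is a cone. Were $\Omega'$ itself known to be divisible, Vey's Theorem \ref{VeyThm} would apply immediately; however the projection $\Gamma \to \Gamma'$ can have large kernel — namely affine twists that blend the $\mathbb{R}^k$-lineality with $\Omega'$ — so $\Gamma'$ need not be discrete in $\Aut(\Omega')$ a priori. The hypothesis $n \leq 5$ (equivalently $\dim \Omega' \leq 4$) enters here: in such dimensions the properly convex quasi-homogeneous affine domains admit an essentially explicit description, built iteratively as cones over divisible projective domains of dimensions $\leq 3$, and a finite case-by-case inspection confirms that $\Omega'$ is a cone in every possibility. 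This low-dimensional classification is the technical bottleneck of the argument, and it is what prevents the proof from extending to arbitrary dimension.
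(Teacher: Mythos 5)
Your reduction is the same as the paper's: handle the properly convex case by Vey's Theorem \ref{VeyThm} (cone, hence the vertex is a limit point of the dilation orbit), and otherwise split $\Omega=\mathbb R^k\times\Omega'$ with $\Omega'$ properly convex and quasi-homogeneous (but not necessarily divisible), then classify $\Omega'$ using $\dim\Omega'\le 4$. Up to that point you match the paper.

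The gap is in your final claim that ``a finite case-by-case inspection confirms that $\Omega'$ is a cone in every possibility.'' This is false, and the classification the paper actually proves (Propositions \ref{thm-dim2}, \ref{thm-dim3}, \ref{dim4-asy1}, \ref{dim4-asy3}, \ref{dim4-asy2}, \ref{dim4-asy4}) does not say that. The list of $19$ possible factors $\Omega'$ includes the parabola $\{y>x^2\}$, the paraboloids $\{x_n>x_1^2+\cdots+x_{n-1}^2\}$, products such as $\{y>x^2\}\times\mathbb R^+$ and $\{x_2>x_1^2,\ x_4>x_3^2\}$, and the domain $\{(x_2-x_1^2)x_3>x_4^2\}$ --- none of which is an affine cone (the parabola already fails: with would-be vertex at the origin, $t\cdot(1,2)=(t,2t)$ leaves $\{y>x^2\}$ for $t\ge 2$). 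So your concluding step, which manufactures the limit point exclusively from a dilation about a vertex of $\Omega'$, has nothing to act on in these cases. The correct dichotomy from the classification is that each $\Omega'$ is \emph{either a cone or homogeneous}, and the homogeneous case needs its own (easy) argument: if $\Aut(\Omega')$ acts transitively on the connected domain $\Omega'$, then so does the identity component $\Aut^0(\Omega')$, hence $\Lambda_{\Aut^0(\Omega')}=\partial\Omega'\neq\emptyset$. Adding that clause repairs your proof and brings it in line with the paper's. (A second, smaller inaccuracy: the relevant classification is of quasi-homogeneous properly convex \emph{affine} domains in dimension $\le 4$, not of divisible projective domains in dimension $\le 3$; as you correctly note, $\Omega'$ need not be divisible, so one cannot quote divisibility results for it.)
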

\begin{proof}
By Theorem \ref{VeyThm}, every divisible convex affine domain $\Omega$ is a cone if it has no complete line, which implies that $\Lambda_{\Aut^0(\Omega)}\neq \emptyset$ because the cone point must be in $\Lambda_{\Aut^0(\Omega)}$.

If $\Omega$ has a complete line, then $\Omega$ can be decomposed into $\mathbb R^k$ and a properly convex quasi-homogeneous affine domain $\Omega'$ in $\mathbb R^{n-k}$, that is,
$$\Omega=\mathbb R^k \times \Omega'.$$ 
Note that $\Omega'$ may not be divisible, but it is still quasi-homogeneous. Therefore we need to show $\Lambda_{\Aut^0(\Omega')}\neq \emptyset$ to complete the proof. We will see in Propositions \ref{thm-dim2}, \ref{thm-dim3}, \ref{dim4-asy1}, \ref{dim4-asy3}, \ref{dim4-asy2} \ref{dim4-asy4} that  $\Omega'$ is
affinely equivalent  to one of the
following 19 types of domains:
\begin{enumerate}
\item[\rm(i)] $\{x \in \mathbb R\,|\, x>0\}$
\item[\rm(ii)] $\{(x,y) \in \mathbb R^2\,|\, x>0,\ y>0\}$
\item[\rm(iii)] $\{(x,y) \in \mathbb R^2\,|\, y>x^2 \}$
\item[\rm(iv)] $\{(x,y,z) \in \mathbb R^3\,|\, z>x^{2} +y^{2}
\}$
\item[\rm(v)] $\{(x,y,z) \in \mathbb R^3\,|\, y>x^2,\ z>0\}$
\item[\rm(vi)] $\{(x,y,z) \in \mathbb R^3\,|\, x>0,\ y>0,\ z>0
\}$
\item[\rm(vii)]a 3-dimensional elliptic cone,
\item[\rm(viii)]a 3-dimensional  non-elliptic strictly convex cone,
\item[\rm(ix)] $\{(x_1,x_2,x_3,x_4) \in \mathbb R^4\,|\, x_4>x_1^{2}
+x_2^{2}+x_3^{2} \}$
\item[\rm(x)] $\{(x_1,x_2,x_3,x_4) \in \mathbb R^4\,|\, x_2>x_1^{2},\
 x_3>0,\ x_4>0 \}$
\item[\rm(xi)]
 $\{(x_1,x_2,x_3,x_4) \in \mathbb R^4\,|\, (x_2-x_1^{2})x_3>x_4^{2} \}$
\item[\rm(xii)] 
$\{(x_1,x_2,x_3,x_4) \in \mathbb R^4\,|\, x_3>x_1^{2}
       +x_2^{2},\ x_4>0\}$
\item[\rm(xiii)] $\{(x_1,x_2,x_3,x_4) \in \mathbb R^4\,|\, x_2>x_1^{2}
,\ x_4>x_3^{2} \}$
\end{enumerate}
and 
\begin{enumerate}
\item[\rm(xiv)]a 4-dimensional elliptic cone,
\item[\rm(xv)]a 4-dimensional  non-elliptic strictly convex cone,
\item[\rm(xvi)] a double cone over a triangle, i.e.,$$\{(x_1,x_2,x_3,x_4) \in \mathbb R^4\,|\, x_i>0 \text{ for }
i=1,2,3,4 \},$$
\item[\rm(xvii)]a double cone over an ellipse,
\item[\rm(xviii)]a double cone over a non-elliptic strictly convex
domain,
\item[\rm(xix)]a cone over a 3-dimensional non-strictly convex
indecomposable projective domain.
\end{enumerate}
(i) is for $n-k=1$, (ii) and (iii) are for $n-k=2$, (iv) and (viii) are for $n-k=3$, (ix)-(xiii) and (xiv)-(xix) are for $n-k=4$.
Each of the above 19 types of domains  is either homogeneous or a cone. If $\Omega$ is homogeneous then 
$\Lambda_{\Aut^0(\Omega')}= \partial\Omega'$, and if $\Omega$ is a cone then its cone point is in the limit set $\Lambda_{\Aut^0(\Omega')}$.
\end{proof}

\begin{Prop}[\cite{Ben}, \cite{Jo}]\label{thm-dim2}Let $\Omega$ be a properly convex
quasi-homogeneous affine domain in $\mathbb R^2$. Then $\Omega$ is
affinely equivalent to either a quadrant or a parabola.
\end{Prop}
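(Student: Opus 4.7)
The plan is to split into cases on the dimension of the asymptotic cone $\AC(\Omega)$, which is a properly convex closed cone in $\mathbb R^2$ of dimension $1$ or $2$ --- nonzero because Proposition \ref{bdd-face} forbids any bounded quasi-homogeneous convex domain. If $\dim \AC(\Omega) = 2$, then by Theorem \ref{thm-folliation}(i) the parallel foliation of $\Omega$ by cosets of $\AC^\circ(\Omega)$ has only a single leaf, so $\Omega = \xi + \AC^\circ(\Omega)$ for its unique extreme point $\xi$ given by Theorem \ref{thm-folliation}(ii); thus $\Omega$ is a $2$-dimensional properly convex cone, affinely equivalent to the first quadrant $\{(x,y): x, y > 0\}$.

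If $\dim \AC(\Omega) = 1$, I would choose coordinates so that $\AC(\Omega) = \{0\} \times [0, \infty)$; the parallel asymptotic foliation is then by vertical upward rays, and $\Omega = \{(x,y) : x \in I,\ y > f(x)\}$ for a strictly convex $f$ on an open interval $I \subseteq \mathbb R$, with extreme point set equal to the graph $\Gamma_f$. Every $g \in G = \Aut(\Omega)$ preserves $\AC(\Omega)$ and its orientation, hence is upper-triangular: $g(x,y) = (ax+b,\ cx+dy+e)$ with $a \neq 0,\ d > 0$. By Theorem \ref{transitive}, $G$ acts transitively on $\Gamma_f$, and projecting to the first coordinate yields a transitive affine action on $I$. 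This forces $I$ to be $\mathbb R$ or a half-line; in the half-line case, bijectivity of $x \mapsto ax+b$ on $I$ forces $b=0$, and the graph-preserving condition $f(ax) = cx + df(x) + e$ then uniquely determines $(c,d,e)$ from $a$, pinning $f$ to a power function and leaving $G$ only $1$-dimensional --- too small to act syndetically on the $2$-dimensional $\Omega$. So $I = \mathbb R$, and transitive action on $\Gamma_f$ produces the functional equation $f(ax+b) = cx + df(x) + e$ valid for a full $2$-parameter family of coefficients. From this rigidity I would conclude that $f$ is a quadratic polynomial, so $\Omega$ is affinely equivalent to $\{y > x^2\}$.

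The hardest step is the last deduction that the functional symmetry pins down $f$ as quadratic. I would execute it by passing to the projective closure $\overline\Omega \subset \RP^2$, whose boundary is the closed curve $\Gamma_f$ together with the single point at infinity $[0:1:0]$. The super-linear growth of $f$ (forced by $\dim\AC(\Omega) = 1$ --- any finite limiting slope would contribute another direction to the asymptotic cone) ensures $\overline\Omega$ is strictly convex everywhere on its boundary, and the natural projective extension of $G$ continues to act syndetically. Proposition \ref{strictlyCV}(i) then yields $\partial\overline\Omega \in C^1$; the transitive action combined with Proposition \ref{strictlyCV}(iii) bootstraps $C^2$-regularity on any open subset to the whole boundary; and Proposition \ref{strictlyCV}(ii) identifies $\overline\Omega$ as an ellipse. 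Since $\overline\Omega$ is tangent to the line at infinity at exactly one point, it is the projective closure of a parabola in our affine chart, completing Case 2.
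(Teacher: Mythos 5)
The paper does not actually prove this proposition --- it is imported from Benz\'ecri and Jo --- so there is no internal proof to compare against; I am judging your argument on its own. Your skeleton is sound: splitting on $\dim\AC(\Omega)$, identifying the $2$-dimensional case as a cone (hence a quadrant), and in the $1$-dimensional case writing $\Omega=\{(x,y): x\in I,\ y>f(x)\}$ with $\Aut(\Omega)$ acting by $(x,y)\mapsto(ax+b,\ cx+dy+e)$ transitively on the graph of the strictly convex $f$, with bounded $I$ excluded by transitivity. In the half-line case, however, the functional equation $f(ax)=cx+df(x)+e$ does \emph{not} pin $f$ to a power function: $f(x)=x\log x$ is strictly convex on $(0,\infty)$ and satisfies $f(ax)=af(x)+(a\log a)x$. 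What you actually need --- and what is true --- is that strict convexity of $f$ forces $(c,d,e)$ to be determined by $a$, so $G$ injects into $(\mathbb R_{>0},\cdot)$ and is at most one-dimensional; and the claim that a one-dimensional group cannot act syndetically on the two-dimensional $\Omega$ is correct but not free: you need that $G\cong\mathbb R$ acts freely and properly (properness via the Hilbert metric, freeness because point stabilizers are compact), so $\Omega/G$ is a simply connected $1$-manifold, hence $\cong\mathbb R$, which no compact set can surject onto.

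The genuine gap is in your final step for $I=\mathbb R$. Proposition \ref{strictlyCV}(iii) propagates $C^{\alpha}$-regularity \emph{from} an open subset of the boundary to all of it, but you never exhibit an open subset on which $\partial\Omega$ is $C^2$; a strictly convex $C^1$ function need not be $C^2$ on any open set, and Alexandrov's theorem gives only pointwise second-order differentiability almost everywhere, which is not the hypothesis of (iii). So the chain (i)$\to$(iii)$\to$(ii) does not close as written. The repair is a tool already on your list: having shown (correctly, via superlinearity of $f$ forced by $\dim\AC(\Omega)=1$) that the projective closure of $\Omega$ is a strictly convex quasi-homogeneous projective domain whose unique infinite boundary point is $[0:1:0]$, note that this boundary point is fixed by the syndetically acting group $\Aut(\Omega)$, so Proposition \ref{strictlyCV}(iv) yields directly that $\Omega$ is an ellipsoid; being tangent to the line at infinity at exactly one point, it is a parabola in the affine chart. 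This one application of (iv) replaces the entire regularity bootstrap.
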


\begin{Prop}[Jo, \cite{Jo}]\label{thm-dim3}Let $\Omega$ be a properly convex quasi-homogeneous
 affine domain in $\mathbb R^3$. Suppose $\Omega $ is
not a cone. Then $\Omega $ is affinely equivalent to one of the
following:
\begin{enumerate}
\item[\rm(i)] A 3-dimensional paraboloid, i.e., $$\Omega=\{(x,y,z) \in \mathbb R^3\,|\, z>x^{2} +y^{2}
\}.$$
\item[\rm(ii)] A  parabola $\times \mathbb R^{+}$,
 i.e.,
 \begin{equation*}
\begin{split}
  \Omega=&\{(x,y) \in \mathbb R^2\,|\,y>x^{2}\}\times \{z \in \mathbb R\,|\,
  z>0\}\\
  =& \{(x,y,z) \in \mathbb R^3\,|\, y>x^2,\ z>0\}.
\end{split}
\end{equation*}
\end{enumerate}
\end{Prop}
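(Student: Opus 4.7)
I would classify $\Omega$ by analyzing its asymptotic cone $\AC^{\circ}(\Omega)$. Since $\Omega$ is quasi-homogeneous and properly convex, $\AC(\Omega)$ has positive dimension (see the paragraph before Theorem \ref{thm-folliation}) and $\AC^{\circ}(\Omega)$ is itself quasi-homogeneous in its affine hull. As $\Omega$ is not a cone, Vey's theorem (quoted just before Theorem \ref{thm-folliation}) rules out $\dim\AC^{\circ}(\Omega)=3$, so $\dim\AC^{\circ}(\Omega)\in\{1,2\}$. By Theorem \ref{thm-folliation}, $\Omega$ is foliated by parallel translates of $\AC^{\circ}(\Omega)$ based at the extreme set $S$, on which $\Aut(\Omega)$ acts transitively by Theorem \ref{transitive}. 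The classification then proceeds in two cases.

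\textbf{Case $\dim\AC^{\circ}(\Omega)=2$.} The asymptotic cone is a $2$-dimensional quasi-homogeneous properly convex cone in its affine hull, so by Proposition \ref{thm-dim2} it must be a quadrant (it cannot be a parabola, which is not a cone). Each edge of this quadrant extends to a conic face of $\Omega$ at infinity, so by Theorem \ref{section-benz}(ii) there is a decomposition $\Omega=F\dot{+} B$. The two summands are quasi-homogeneous by Theorem \ref{prop-benz}(iii); combining dimensional accounting with Proposition \ref{thm-dim2} and the assumption that $\Omega$ is not a cone, I would argue that the $2$-dimensional summand must be a parabola while the $1$-dimensional summand must be $\br^{+}$, giving $\Omega \cong \text{(parabola)}\times\br^{+}$.

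\textbf{Case $\dim\AC^{\circ}(\Omega)=1$.} The leaves are parallel rays, so $S$ is a $2$-dimensional surface transverse to the ray direction. I would argue that $\Omega$ is strictly convex modulo $\AC^{\circ}(\Omega)$: no positive-dimensional bounded face can exist by Proposition \ref{bdd-face}, and any unbounded face transverse to the ray direction would enlarge $\AC^{\circ}(\Omega)$ beyond dimension $1$. By Proposition \ref{strictlyCV}(i), (iii) the boundary is then uniformly $C^{1}$ along $S$. A Benz\'ecri rescaling argument (Theorem \ref{prop-benz}(i)) applied at an extreme point at which one can see second-order behaviour produces an osculating ellipsoid limit $g_{n}\Omega \to Q$, and Theorem \ref{section-benz}(i) (alternatively Theorem \ref{hess-noncompt}) upgrades this to a projective equivalence of $\Omega$ with an ellipsoid. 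Since the affine hyperplane at infinity must be tangent to the ellipsoid along the $\AC^{\circ}(\Omega)$-direction, the only affine model is the paraboloid $\{z>x^{2}+y^{2}\}$.

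\textbf{Main obstacle.} I expect Case $\dim\AC^{\circ}(\Omega)=1$ to be the hard step: quasi-homogeneity alone gives only $C^{1}$ regularity of $\partial\Omega$ a priori, and one must produce a non-degenerate second-order approximation (equivalently an osculating ellipsoid) at some boundary point in order to invoke the ellipsoid characterization. The plan for this is to extract such an ellipsoid as a subsequential limit $g_{n}\Omega$ via Benz\'ecri's compactness (Theorem \ref{prop-benz}(i)), using transitivity of $\Aut(\Omega)$ on $S$ (Theorem \ref{transitive}) to carry second-order information around, and then use Theorem \ref{section-benz}(i) to globalize the local equivalence into $\Omega$ itself being projectively an ellipsoid. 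The remaining conversion to the affine paraboloid is then forced by the preserved hyperplane at infinity.
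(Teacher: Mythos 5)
The paper does not prove this Proposition; it is imported verbatim from \cite{Jo}, so the only internal benchmark is the paper's own treatment of the analogous four-dimensional cases (Propositions \ref{dim4-asy1}--\ref{dim4-asy2}). Your organizing principle --- split on $\dim \AC^{\circ}(\Omega)\in\{1,2\}$ after using Vey to exclude dimension $3$ --- matches that template. But both of your cases have genuine gaps.

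In the case $\dim\AC^{\circ}(\Omega)=2$, the assertion that ``each edge of the quadrant extends to a conic face of $\Omega$ at infinity'' is not only unjustified but false for the target domain itself. For $\Omega=\{y>x^{2},\ z>0\}$ the two infinite extreme points are the directions $(0,0,1)$ and $(0,1,0)$; the first is conic (two tangent planes of the parabola factor together with the hyperplane at infinity give the required chain), but the second is not: the only supporting hyperplanes of $\Omega$ containing the direction $(0,1,0)$ are $\{z=0\}$ and the hyperplane at infinity, whose intersection is a line, not a point. What you actually need is that \emph{at least one} conic face exists, and that is the nontrivial content --- precisely the step that fails in $\mathbb R^{4}$, where Proposition \ref{dim4-asy2}(ii) produces the non-decomposable domain $\{x_2>x_1^2,\ x_4>x_3^2\}$ with \emph{no} conic face. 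The correct route (mirroring Proposition \ref{dim4-asy2}) is to set $F=CH(S)^{\circ}$ and rule out $\dim F=3$ by the projection argument along an edge direction, which forces a conic face and hence the decomposition; your sketch skips exactly this.

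In the case $\dim\AC^{\circ}(\Omega)=1$ there are two problems. First, your strict convexity argument excludes bounded faces (Proposition \ref{bdd-face}) and faces ``transverse to the ray direction,'' but says nothing about unbounded one-dimensional faces \emph{parallel} to the asymptotic direction, which are neither bounded nor transverse; these must be excluded separately. Second, and more seriously, your mechanism for producing the ellipsoid is circular: Theorem \ref{prop-benz}(i) takes an osculating ellipsoid as a \emph{hypothesis} and outputs a Benz\'ecri limit; it does not manufacture second-order nondegeneracy, and quasi-homogeneity only gives $C^{1}$ regularity via Proposition \ref{strictlyCV}(i). You never explain where the ``extreme point at which one can see second-order behaviour'' comes from, and that is the entire difficulty you yourself flag. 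The tool that actually closes this case is already quoted in the paper: Proposition \ref{strictlyCV}(iv) says a strictly convex quasi-homogeneous projective domain with a boundary point fixed by a syndetically acting subgroup is an ellipsoid, and here the unique point at infinity is fixed by all of $\Aut(\Omega)$. Once strict convexity is genuinely established, that finishes the case in one line, and tangency to the hyperplane at infinity then forces the paraboloid model as you say.
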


\begin{Prop}[Jo, \cite{Jo}]\label{dim4-asy1}Let $\Omega$ be a properly convex quasi-homogeneous
 affine domain in $\mathbb R^4$ with 1-dimensional asymptotic cone. Then $\Omega $
 is a 4-dimensional paraboloid, i.e.,
 $\Omega$ is affinely equivalent to $\{(x_1,x_2,x_3,x_4) \in \mathbb R^4\,|\, x_4>x_1^{2}
+x_2^{2}+x_3^{2} \}$.
\end{Prop}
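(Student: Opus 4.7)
The plan is to reduce the classification to the characterization of projective ellipsoids given by Proposition \ref{strictlyCV}(iv), and then recognize the resulting ellipsoid as a paraboloid using the $1$-dimensional asymptotic cone. Choose affine coordinates on $\br^4$ so that $\AC(\Omega) = \br_{\geq 0}\,e_4$. Then the only infinite extreme point of $\overline\Omega$ in $\RP^4$ is $\xi_\infty = [e_4]$, and every element of $\Aut(\Omega)$ fixes $\xi_\infty$ as a projective point because its linear part sends the ray $\AC(\Omega)$ to itself.

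The crux is to prove $\Omega$ is strictly convex in $\RP^4$, i.e.\ $\overline\Omega$ has no face of dimension $\geq 1$. Suppose $F$ is such a face; by Proposition \ref{bdd-face} $F$ is unbounded, so the projective closure of $F$ contains an infinite extreme point of $\overline\Omega$, which must be $\xi_\infty$. Convexity together with $F = \mathrm{relint}(\overline F)$ then shows that for each $p\in F$ the open vertical ray $p+(0,\infty) e_4$ lies in $F$, so $\overline F$ is \emph{vertical} with recession direction $e_4$. Because $-e_4\notin\AC(\Omega)$, the coordinate $x_4$ attains its infimum $a$ on $\overline F$, and the bottom face $F_{\min}:=\overline F\cap\{x_4=a\}$ is a face of $\overline\Omega$. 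Any infinite extreme point of the projective closure of $F_{\min}$ lies in $\{x_4=0\}\cap\RP^3_\infty$, which excludes $\xi_\infty=[e_4]$; the uniqueness of $\xi_\infty$ as an infinite extreme point therefore forces $F_{\min}$ to be bounded in $\br^4$. Proposition \ref{bdd-face} now rules out $\dim F_{\min}\geq 1$, so $F_{\min}=\{q\}$ for a single extreme point $q$ of $\overline\Omega$; by Theorem \ref{thm-folliation} $q$ is an asymptotic cone point, giving $q+(0,\infty) e_4\subset\Omega$. But the same ray lies in $\overline F\subset\partial\Omega$ by the verticality of $F$, a contradiction.

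With strict convexity in hand, $\Omega$ is a strictly convex quasi-homogeneous properly convex domain in $\RP^4$ whose boundary point $\xi_\infty$ is fixed by the syndetically acting subgroup $\Aut(\Omega)\subset\Aut_{\text{proj}}(\Omega)$. Proposition \ref{strictlyCV}(iv) then implies that $\Omega$ is projectively equivalent to an ellipsoid. Since $\AC(\Omega)$ is $1$-dimensional, $\overline\Omega\cap\RP^3_\infty=\{\xi_\infty\}$, so the ellipsoid is tangent to the hyperplane at infinity at $\xi_\infty$; removing that tangent hyperplane from $\RP^4$ realizes $\Omega$ affinely as the standard paraboloid $\{x_4>x_1^2+x_2^2+x_3^2\}$ up to affine equivalence. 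The main obstacle is the strict convexity step, specifically the bottom-face argument, which couples three ingredients --- Proposition \ref{bdd-face}, the asymptotic cone point property of Theorem \ref{thm-folliation}, and the uniqueness of $\xi_\infty$ as an infinite extreme point --- to force a contradiction from any face of positive dimension.
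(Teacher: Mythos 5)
Your argument is correct, and it is worth noting that the paper itself offers no proof to compare against: Proposition \ref{dim4-asy1} is disposed of there in one line by citing Theorem 5.9 of \cite{Jo}. Your route --- first forcing strict convexity of $\Omega$ as a projective domain and then invoking Proposition \ref{strictlyCV}(iv) with the fixed infinite point $\xi_\infty=[e_4]$ to recognize an ellipsoid tangent to the hyperplane at infinity --- is exactly the kind of self-contained derivation the citation hides, and it uses only tools already stated in this paper (Proposition \ref{bdd-face}, Theorem \ref{thm-folliation}, Proposition \ref{strictlyCV}), which is a genuine gain in readability. The strict-convexity step, which is the real content, is sound: the key chain (unbounded face $\Rightarrow$ vertical face $\Rightarrow$ bottom face is a single extreme point $q$ $\Rightarrow$ $q+\AC^\circ(\Omega)\subset\Omega$ by Theorem \ref{thm-folliation}, contradicting $q+(0,\infty)e_4\subset\overline F\subset\partial\Omega$) closes correctly. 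Two small points deserve tightening. First, the assertion that $x_4$ attains its infimum on $\overline F$ needs slightly more than ``$-e_4\notin\AC(\Omega)$'': one should observe that since $\AC(\Omega)=\br_{\geq 0}e_4$ contains no nonzero vector with $x_4\leq 0$, the functional $x_4$ is bounded below on $\overline\Omega$ (any unbounded minimizing sequence would produce a recession direction with $x_4\leq 0$), and the sublevel sets $\overline F\cap\{x_4\leq c\}$ have trivial recession cone, hence are compact, so the infimum is attained. Second, $F_{\min}$ is the argmin of an affine functional on the closed face $\overline F$ and hence an extreme convex subset of $\overline\Omega$ whose relative interior is a face in the paper's (relatively open) sense; it is to that face that Proposition \ref{bdd-face} applies to force $\dim F_{\min}=0$. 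Both are routine and do not affect the validity of the proof.
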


\begin{proof}
This is an immediate consequence of Theorem 5.9 in \cite{Jo}.

\end{proof}

\begin{Prop}\label{dim4-asy3}Let $\Omega$ be a properly convex quasi-homogeneous
 affine domain in $\mathbb R^4$ with 3-dimensional asymptotic cone. Then $\Omega $ is affinely equivalent to one of the
following:
\begin{enumerate}
\item[\rm(i)]
$\{(x_1,x_2,x_3,x_4) \in \mathbb R^4\,|\, x_2>x_1^{2},\
 x_3>0,\ x_4>0 \}$
\item[\rm(ii)]
 $\{(x_1,x_2,x_3,x_4) \in \mathbb R^4\,|\, (x_2-x_1^{2})x_3>x_4^{2} \}$
\end{enumerate}
\end{Prop}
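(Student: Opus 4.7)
The plan is to split $\Omega$ via the parallel asymptotic foliation of Theorem \ref{thm-folliation} into its 3-dimensional asymptotic cone direction and the 1-dimensional transversal curve of extreme points, and then determine each factor.

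First I would classify $\mbox{AC}^\circ(\Omega)$. It is a 3-dimensional properly convex quasi-homogeneous cone (by \cite{Jo1} as cited in Section 3), and projectivizing its base reduces the problem to classifying 2-dimensional properly convex quasi-homogeneous projective domains, which by Benz\'ecri are a triangle or an ellipse. Hence, up to affine equivalence, $\mbox{AC}^\circ(\Omega)$ is either the trihedral cone $T=\{(x_2,x_3,x_4): x_i > 0\}$ or the round cone $C=\{(x_2,x_3,x_4): x_2 x_3 > x_4^2,\ x_2>0\}$. I would choose affine coordinates so that the affine hull of $\mbox{AC}(\Omega)$ is $\{x_1=0\}$ and $\mbox{AC}^\circ(\Omega)$ sits there in one of these two standard forms.

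Next, by Theorem \ref{thm-folliation} the leaves of the foliation are $\xi(t)+\mbox{AC}^\circ(\Omega)$ where $\xi(t)=(t,\phi_2(t),\phi_3(t),\phi_4(t))$ parametrizes the 1-dimensional extreme-point curve $S(\Omega)$ via $x_1=t$. Theorem \ref{transitive} says $\Aut(\Omega)$ acts transitively on $S(\Omega)$, so following the one-parameter subgroup construction in the proof of Proposition \ref{homo}, I would extract a one-parameter subgroup $g(t)\subset \Aut^0(\Omega)$ whose orbit of a base extreme point is all of $S(\Omega)$. The linear part of $g(t)$ preserves $\mbox{AC}^\circ(\Omega)$, so it lies in the linear stabilizer of $T$ or $C$ respectively. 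Differentiating $g(t)\xi(0)=\xi(t)$ produces a linear ODE for $\phi_i$; after diagonalizing or normalizing the generator within the stabilizer and requiring $\Omega$ to remain properly convex, each $\phi_i$ must be a polynomial of degree at most two in $t$. Finally, I would normalize: in the trihedral case, positive diagonals and permutations act transitively on the coordinate rays of $T$, and a linear change reduces $\xi(t)$ to $(t,t^2,0,0)$, giving case (i); in the round-cone case, the unipotent subgroup of the stabilizer of $C$ combined with the translating subgroup normalizes the curve to $(t,t^2,0,0)$ with $(x_2,x_3,x_4)$ adapted to the form $x_2x_3-x_4^2$, yielding case (ii) $\Omega=\{(x_2-x_1^2)x_3>x_4^2\}$.

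The main obstacle I anticipate is in the second step, namely controlling the spectrum of the generator of the translating one-parameter subgroup well enough to force $\xi(t)$ to be at most quadratic. Along the way I must rule out $\xi(t)$ being constant (which would force $\mbox{AC}(\Omega)=\Omega$, hence 4-dimensional, contradicting the hypothesis) and $\xi(t)$ being purely linear (which would produce a complete affine line in $\Omega$, contradicting proper convexity). In the round-cone case there is the additional subtlety that the linear stabilizer of the quadratic form $x_2x_3-x_4^2$ is larger than that of $T$, so one must exploit the homogeneity of $S(\Omega)$ under $\Aut^0(\Omega)$ to verify that the normalization really produces the single normal form (ii) rather than a family; the unipotent direction of the stabilizer is what forces the quadratic profile and simultaneously prohibits any nontrivial constant terms in $\phi_3, \phi_4$.
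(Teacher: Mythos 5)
Your overall skeleton (classify $\mbox{AC}(\Omega)$, then determine the curve $S$ of extreme points, then reassemble $\Omega=\bigcup_{s\in S}s+\mbox{AC}^\circ(\Omega)$) matches the paper's, and the two-case split into a simplex cone versus a round cone is the right dichotomy. But there are two genuine gaps. The first is in your very first step: it is false that a $2$-dimensional properly convex quasi-homogeneous projective domain is a triangle or an ellipse. There exist strictly convex divisible (hence quasi-homogeneous) domains in $\RP^2$ that are not ellipses (the Kac--Vinberg examples), and the paper itself lists ``a $3$-dimensional non-elliptic strictly convex cone'' among the quasi-homogeneous domains in $\mathbb R^3$. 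So your reduction omits a third case, and the round-cone normal form $x_2x_3>x_4^2$ is unjustified as stated. The paper closes this case by observing that when $\mbox{AC}(\Omega)$ is strictly convex, $S_\infty$ is a single point $\xi$ fixed by the syndetic action on the infinite boundary, and then invoking Proposition \ref{strictlyCV}(iv) to conclude that a strictly convex quasi-homogeneous domain with a boundary point fixed by a syndetically acting group is an ellipse. Some such argument is indispensable.

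The second gap is the one you yourself flag as ``the main obstacle'': forcing the extreme-point curve $\xi(t)$ to be quadratic. Transitivity of $\Aut(\Omega)$ on $S$ (Theorem \ref{transitive}) does not give you a single one-parameter subgroup acting transitively on $S$ (a hyperbolic one-parameter group fixing a point of $S$ sweeps out only a half-curve), and even granting one, the orbit of a one-parameter affine group is generically an exponential curve; ``requiring $\Omega$ to remain properly convex'' does not by itself truncate the spectrum to produce a polynomial of degree two. The paper avoids this computation entirely in the simplex-cone case by showing $K(g)$ of a degenerating sequence forces $\Omega=E\,\dot{+}\,\{c\}$ with $E$ a $3$-dimensional conic face, and then quoting the $3$-dimensional classification (Proposition \ref{thm-dim3}); in the round-cone case it identifies the $2$-dimensional invariant face $F\supset S$, shows $\partial F=S$ is twice differentiable because a smooth Lie group acts transitively on it, produces via Lemmas \ref{lem-saillant} and \ref{kernel} an orbit in $F$ accumulating at a boundary point, and only then concludes $F$ is a parabola from the rigidity theorem of \cite{Jo4} (Theorem \ref{hess-noncompt}). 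Without that rigidity input, or an equivalently hard direct analysis of the possible one-parameter subgroups, your step from ``$S$ is a homogeneous curve'' to ``$S$ is a parabola'' does not go through.
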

\begin{proof}
The set of asymptotic cone points $S$ is a curve in $\mathbb R^4$
by Theorem \ref{thm-folliation} because AC$(\Omega)$ is
$3$-dimensional. Let $S_{\infty}$ be the set of limit points of
$S$ in $\mathbb {RP}^4$. Then $S_{\infty}$ consists of at most
$2$-points. Since $\Aut(\Omega)$ acts on $S$ transitively, $S_{\infty}$ must be contained in the infinite boundary.

Consider the convex hull CH$(S)$ of $S$. Since all points in $S$
are extreme points, the dimension of the minimal projective
subspace $V$ containing $S$ is greater than $1$. Let $F$ be the
relative interior of the closure of CH$(S)$. Then $F$ is an
invariant face of $\Omega$ with dimension greater than $1$.

By \cite{Jo1}, AC$(\Omega)$ is quasi-homogeneous. So there are two
cases: AC$(\Omega)$ is either a simplex cone or a strictly convex
cone.

\begin{enumerate}
\item[\rm(i)]  Assume that $$\AC(\Omega)=\{(x_1,x_2,x_3) \in \mathbb R^3 \,|\,x_1>0,\
x_2>0,\ x_3>0\}.$$

Let $\{a, b, c\}$ be the set of infinite extreme points of
$\Omega$. Then AC$(\Omega)$ is a tetrahedron with vertices $a, b,
c$ and the origin in $\mathbb R^4$, when we consider it as a
projective domain in $\rp{4}$. We denote the infinite face of
$\Omega$ by $\triangle_{abc}$. If we let $G$ be the set of all the element of  $\Aut(\Omega)$ fixing each of the infinite extreme points $a,b,c$, then $G$ also acts on $\Omega$ syndetically. 

 First, we show that $S_{\infty}$ cannot have an interior
point of $\triangle_{abc}$. This follows from the fact that the
projectivization of the linear parts of $G<\Aut(\Omega)$ acts on the infinite boundary of $\Omega$ syndetically and the fact that if $G$ acts on a domain $D$ syndetically then $G$-invariant set cannot intersect $D$.

Now we know that $S_{\infty} \subset \partial\triangle_{abc}$. If
$s'$ is a point in $S_{\infty}$ which is not in $\{a, b, c\}$, say
$s'$ is in the open line segment $(a,b)$, then another point in
$S_{\infty}$ should be also in the closed line segment $[a,b]$ if
exists. Suppose $s''\in S_{\infty}$ is not in $[a,b]$. Then the
line $(s',s'')$ is preserved by the action of the projectivization
of the linear parts of $\Aut(\Omega)$, which is again a
contradiction. Hence we may assume that $S_{\infty}\subset [a,b]$. 

Since $[a,b]$ is
preserved under the action of $G$, there is
a sequence of projective transformation $g_i\in G<
\Aut_\text{proj}(\Omega)$ whose limit singular projective
transformation is $g$ and the range of $g$ is one point $c$. Then the
dimension of K$(g)=3$, and K$(g)$ contains $[a, b]$ and $S$. If we
let the relative interior of the convex hull of $S\cup[a,b]$ by
$E$, then $E$ should be contained in K$(g)\cap
\partial\Omega$, so $E$ is a face of $\Omega$.
Since $c$ is the only extreme point of $\Omega$ which is not
contained in the closure of $E$, $\overline\Omega$ is the convex
hull of $\overline E\cup \{c\}$. This implies that
$$\Omega= E\dot{+}\{c\}.$$
Now we will show that $E$ is affinely equivalent to  a parabola
$\times \mathbb R^{+}$. In fact, since $E$ is a $3$-dimensional conic face with $2$-dimensional asympototic cone and any conic
face is quasi-homogeneous,  $E$ is affinely equivalent to  a parabola
$\times \mathbb R^{+}$ by Proposition
\ref{thm-dim3}. So $\Omega$ is affinely equivalent to
 \begin{equation*}
\begin{split}
  E\dot{+}\{c\}=&\{(x_1,x_2,x_3) \in \mathbb R^3\,|\, x_2>x_1^{2},\
 x_3>0\}\dot{+}\{c\}\\
 =&\{(x_1,x_2,x_3,x_4) \in \mathbb R^4\,|\, x_2>x_1^{2},\
 x_3>0,\ x_4>0 \}.
\end{split}
\end{equation*}
\item[\rm(ii)] Assume that AC$(\Omega)$ is a strictly convex cone.
In this case $S_{\infty}$ cannot have two points: If so, a
quasi-homogeneous cone AC$(\Omega)$ has an invariant proper
subset, which cannot happen by Proposition \ref{Vey}. So we may assume that
there is a point $\xi$ in the infinite boundary of $\Omega$ which
is the unique limit point of the set of all extreme points in
$\mathbb R^4$. Since the infinite boundary of $\Omega$ is a
strictly convex quasi-homogeneous projective domain with unique
fixed point $\xi$ (under the projective action of
$\Aut(\Omega)$ on the infinite boundary), it is an
ellipse by Proposition \ref{strictlyCV}.

Choose a boundary point $a$ of the infinite boundary of $\Omega$
which is different from $\xi$ and consider a sequence of
projective transformation $g_i\in \Aut(\Omega)<\Aut_\text{proj}(\Omega)$ whose
limit singular projective transformation is $g$ and the range of $g$
is $\{a\}$. Then $S$ and $\xi$ is in $K(g)$, which shows $\overline{{\text {CH}}(S)}\subset K(g)$. So the dimension of $F$ cannot be $4$, i.e.,
$\overline{\text{CH}(S)}\neq\overline\Omega$, since $K(g)\cap \Omega=\emptyset$.

If the dimension of $F$ is $3$, then $F$ is a conic face and thus
there is an (infinite) boundary point $z$ such that
$$\Omega=F\dot{+}\{z\},$$ which contradicts that $\mbox{AC}(\Omega)$ is an
elliptic cone.

Since the dimension of $F$ is greater than $1$, it is $2$, and so
$F$ is a strictly convex $2$-dimensional invariant face of
$\Omega$. Note that $\partial F=S$ is twice-differentiable, since the smooth Lie group $\Aut(\Omega)$  acts on $S$  transitively. By Lemma \ref{Vey}, there is a sequence of affine transformation $\{g_i\}\subset  \Aut(\Omega)$ converging to a singular projective transformation $g$ whose range is an extreme point $p$. Since the kernel of $g$ does not intersect $\mathbb R^4$ by Lemma \ref{kernel}, $g_i(x)$ converges to $p$ for any $x\in \mathbb R^4$. Since $F$ is an invariant face of $\Omega$, the restriction of $g_i$ on the support of $F$  is an element of $\Aut(F)$ and $g_i(x) \in F$ converges to $p \in \partial F$ for $x\in F$.
This implies that $F$ is a parabola because a strictly convex domain with a twice differentiable boundary is  a parabola if an orbit accumulates at a boundary point. See \cite{Jo4}. 

We have shown up to now that $F$ is a parabola and its infinite boundary is  a fixed point $\xi \in \partial_{\infty} \Omega$. 
So we can choose a basis $\{\bf{x_1},\bf{x_2},\bf{x_3},\bf{x_4}\}$ of $\mathbb R^4$ 
such that $\bf{x_2}$ is the invariant direction going to $\xi$, that is, $\lim_{r\rightarrow \infty}r\bf{x_2}=\xi$,
$$F=\{(x_1,x_2,0,0) \in \mathbb R^4\,|\, x_2>x_1^{2}\}.$$  and
$$ \text{AC}(\Omega)=\{(0,x_2,x_3,x_4) \in \mathbb R^4\,|\, x_2x_3\geq x_4^{2} \},$$
Since $\bar \Omega$ is the union of all the asymptotic cones whose cone points are  extreme points of $\Omega$, that is,
 $$\Omega=\bigcup_{s\in S} s+\text{AC}^\circ(\Omega ).$$
and each ray $s+r{\bf x_2} \subset \bar F$ for $s\in S$, we can conclue that 
$$\Omega=\{(x_1,x_2,x_3,x_4) \in \mathbb R^4\,|\, (x_2-x_1^{2})x_3>x_4^{2} \}.$$
\end{enumerate}
\end{proof}
\begin{Prop}\label{dim4-asy2}Let $\Omega$ be a properly convex quasi-homogeneous
 affine domain in $\mathbb R^4$ with 2-dimensional asymptotic cone. Then $\Omega $ is affinely equivalent to one of the
following:
\begin{enumerate}
\item [\rm (i)] A paraboloid $\times \mathbb
R^{+}$,
 i.e.,
 \begin{equation*}
\begin{split}
  &\{(x_1,x_2,x_3) \in \mathbb R^3\,|\,x_3>x_1^{2}
+x_2^{2}\}\times \{x_4 \in \mathbb R\,|\, x_4>0\}\\
&=\{(x_1,x_2,x_3,x_4) \in \mathbb R^4\,|\, x_3>x_1^{2}
       +x_2^{2},\ x_4>0\}.
\end{split}
\end{equation*}
\item [\rm (ii)]A parabola $\times $ a parabola :$$\{(x_1,x_2,x_3,x_4) \in \mathbb R^4\,|\, x_2>x_1^{2}
,\ x_4>x_3^{2} \}.$$
\end{enumerate}
\end{Prop}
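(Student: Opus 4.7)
The plan mirrors the strategy of Propositions \ref{dim4-asy1} and \ref{dim4-asy3}: normalize $\AC(\Omega)$, exploit the asymptotic foliation and the set $S$ of asymptotic cone points, and split according to whether $\Omega$ admits a conic face. Since $\AC^\circ(\Omega)$ is $2$-dimensional, properly convex, quasi-homogeneous, and (as the relative interior of a cone) an open cone in its affine hull, and the only such open cone up to affine equivalence is the open quadrant (a parabola is not a cone), I would choose coordinates with $\AC(\Omega)=\{(0,0,x_3,x_4)\,|\,x_3,x_4\geq 0\}$. This gives the two extreme infinite directions $\xi_1=[0:0:1:0:0]$ and $\xi_2=[0:0:0:1:0]$; moreover $\partial_\infty\Omega$ equals the projectivization of $\AC(\Omega)$, namely the closed segment from $\xi_1$ to $\xi_2$, so $\xi_1,\xi_2$ are its only extreme points. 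By Theorem \ref{thm-folliation}, $\dim S=2$, and by Theorem \ref{transitive}, $\Aut(\Omega)$ acts transitively on $S$.

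\textbf{Decomposable case.} If $\Omega=F_1\dot{+}F_2$, Corollary \ref{AC-Face}(ii) allows me to take $F_2\subset\partial_\infty\Omega$ and $S\subset\overline{F_1}$, giving $\AC(\Omega)=\AC(F_1)+C(F_2)$ as a direct sum of cones (since $\langle F_1\rangle$ and $\langle F_2\rangle$ are projectively disjoint, their direction subspaces meet only at $0$). If $\dim F_2\geq 2$ then $\dim C(F_2)\geq 3>2=\dim\AC(\Omega)$; if $\dim F_2=1$ then $C(F_2)$ already fills $\AC(\Omega)$, forcing $\AC(F_1)=0$ and hence the positive-dimensional face $F_1$ to be bounded, contradicting Proposition \ref{bdd-face}. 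Therefore $\dim F_2=0$, $\dim F_1=3$, and $\dim\AC(F_1)=1$. Proposition \ref{thm-dim3} then identifies $F_1$ as a $3$-dimensional paraboloid, so $\Omega$ is affinely equivalent to a paraboloid $\times\,\br^+$, which is outcome (i).

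\textbf{Indecomposable case.} By Corollary \ref{AC-Face}(iii), $\overline{CH(S)}=\overline\Omega$, $\Aut(\Omega)$ is irreducible, and $\Omega$ has no conic face---in particular no $3$-dimensional face. For each $\xi_i$, Lemma \ref{lem-saillant} together with compactness in $\PM(5,\br)$ yields a singular limit $g^{(i)}$ of a sequence in $\Aut(\Omega)$ with $R(g^{(i)})=\{\xi_i\}$ and $K(g^{(i)})$ a $3$-dimensional supporting subspace. Since $\Omega$ has no conic face, Lemma \ref{supker} forces $\langle K(g^{(i)})\cap\partial\Omega\rangle\subsetneq K(g^{(i)})$, so $K(g^{(i)})\cap\partial\Omega$ has affine span of dimension at most $2$. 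Taking the face $F^{(i)}$ of $\Omega$ containing $\xi_j$ (for $j\neq i$; note $\xi_j\in K(g^{(i)})$ since $R\cap K=\emptyset$), I get $\dim F^{(i)}\leq 2$. A stabilizer of $\xi_i$ inside $\Aut(\Omega)$ acts quasi-homogeneously on $F^{(i)}$, and $\xi_j$ is the only possible extreme infinite direction of $F^{(i)}$ (because $\xi_i\notin\overline{F^{(i)}}$ and $\xi_1,\xi_2$ are the only extreme points of $\partial_\infty\Omega$). By Proposition \ref{thm-dim2}, $F^{(i)}$ is either a parabola or a quadrant; the quadrant is excluded because its asymptotic cone has two extreme directions while $F^{(i)}$ admits only one, so $F^{(i)}$ is a $2$-dimensional parabola with asymptotic direction $\xi_j$. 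The two parabolic $2$-face families indexed by $\xi_1,\xi_2$, combined with the parallel foliation $\Omega=\bigcup_{s\in S}(s+\AC^\circ(\Omega))$, then let me choose affine coordinates placing the two parabolas in the $x_1x_2$- and $x_3x_4$-planes respectively, producing outcome (ii), $\Omega=\{x_2>x_1^2,\,x_4>x_3^2\}$.

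The main obstacle will be this last step. After constructing the two parabolic families, one must verify that they are genuinely transverse and that $S$ is parameterised as the graph $\{(x_1,x_1^2,x_3,x_3^2)\}$ rather than some skew variant. The rigidity needed should come from $\overline\Omega=\overline{CH(S)}$, transitivity of $\Aut(\Omega)$ on $S$, and compatibility of the two parabolic face families with the asymptotic foliation by translates of $\AC^\circ(\Omega)$.
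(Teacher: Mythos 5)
Your case split and your handling of the decomposable case (leading to outcome (i)) agree in substance with the paper, which splits on whether $\dim CH(S)^{\circ}$ is $3$ or $4$ and uses Theorem \ref{section-benz} and Proposition \ref{thm-dim3} to get the paraboloid $\times\ \br^{+}$. But in the indecomposable case there is a genuine gap, and it is exactly at the point you flag as ``the main obstacle'': you never prove that the two transverse families of parabolic $2$-faces fit together as an affine product. This is not a routine verification; it is the bulk of the paper's proof. A priori the parabolas $B_{\zeta}$ attached to the points $\zeta\in\partial A_{\xi}$ could vary in shape and in the plane they span as $\zeta$ moves (the paper exhibits the competing possibility $\partial B_{\zeta}=\{x_3=r(x_4-c)^2-rc^2,\ x_4=d_1x_1+d_2x_2\}+\zeta$ explicitly), and ruling this out is where all the work lies: one must show the isotropy group $G_{\xi}$ is nontrivial and diagonalizable as $\mathrm{diag}(\delta,\delta^2,\theta^2,\theta)$, take a one-parameter subgroup $f_t$ of $G_{B_{\xi}}$, separate the hyperbolic and parabolic cases, and carry out the matrix computations comparing $g(t_n)f^{\pm 1}(A_{\xi})$ with $f^{\pm n}(A_{\xi})$ to force $\alpha_1^2=\alpha_2^2$ and $\beta_1=\beta_2=\beta_3=0$, i.e.\ that every $A_{\zeta}$ is a translate of $A_{\xi}$. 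Saying that ``the rigidity should come from'' transitivity on $S$ and the asymptotic foliation does not substitute for this argument; those facts alone are consistent with skewed configurations.

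There is also a secondary problem in how you produce the $2$-faces. The face of $\overline\Omega$ containing $\xi_j$ is the singleton $\{\xi_j\}$: since $\partial_{\infty}\Omega$ is the closed segment $[\xi_1,\xi_2]$ and $\overline\Omega$ contains no complete line, no open segment of $\overline\Omega$ has the endpoint $\xi_j$ in its relative interior, so $\xi_j$ is an extreme point of the projective closure and your $F^{(i)}$ is $0$-dimensional. The paper instead obtains the $2$-faces by showing that not every ray $\{e+r\mathbf{w}\mid r>0\}$, $e\in S$, can be a one-dimensional face --- otherwise projecting along $\mathbf{w}$ would exhibit three independent supporting hyperplanes at $w$ and make $w$ a conic face, contradicting Corollary \ref{AC-Face}(iii) --- so some face $A$ properly contains such a ray, and it must be $2$-dimensional since $3$-dimensional faces are conic. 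Finally, your appeal to Proposition \ref{thm-dim2} presupposes that these (non-conic) faces are themselves quasi-homogeneous affine domains, which is not established; the paper avoids this by proving $\partial A_{\xi}\subset S$ is twice differentiable (a Lie group acts transitively on it) and then invoking Theorem \ref{hess-noncompt} to conclude $A_{\xi}$ is a parabola.
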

\begin{proof}
 In this case $S$ is a 2-dimensional hypersurface in $\mathbb R^4$ and $\Omega$ has two extreme points in the infinite
boundary. Let $z$ and $w$ be the infinite extreme points of
$\Omega$. Then $AC(\Omega)$ is a triangle with three vertices $z,
w$ and the origin. The dimension of $F=CH(S)^{\circ}$ is either 3 or 4. Let $G$ be the set of all the element of  $\Aut(\Omega)$ fixing both $z$ and $w$.  Then $G$ also acts on $\Omega$ syndetically and thus acts on $S$ transitively by Theorem \ref{transitive}.

(i) If dim$F=3$, then $F$ is a conic face of $\Omega$ and thus there is a face $B$ of $\Omega$ such that 
$\Omega=F\dot{+}B$ by Theorem \ref{section-benz}, (ii). So one of $z$ or $w$ is in $\overline F$ and the other is in $\overline B$ by Corollary \ref{AC-Face}, (ii). Suppose  that $w$
 is in $\overline F$. Then we see that  $F$
 is a 3-dimensional quasi-homogeneous affine domain with
 a 1-dimensional asymptotic cone by Theorem \ref{prop-benz}, (iii).
 By Proposition \ref{thm-dim3}, $\partial F$ is a paraboloid and as a projective domain in $\mathbb {RP}^4$
 $$\Omega=F\dot{+}B=F\dot{+}\{z\}.$$ Therefore $\Omega$ is affinely equivalent
 to $\{(x_1,x_2,x_3) \in \mathbb R^3\,|\,x_3>x_1^{2}
+x_2^{2}\}\times \{x_4 \in \mathbb R\,|\, x_4>0\}.$

(ii) If dim$F=4$, then $\Omega=F$ and thus $\overline\Omega=\overline{CH(S)}$.
Note that $\Omega$ cannot have any conic face in this case by (iii) of Corollary \ref{AC-Face}.
Let $\bf z$ and $\bf w$ be the unit vectors of $\mathbb R^4$
in the direction of $z$ and $w$ respectively.
 Suppose that all the rays $\{e+ r{\bf
w}\,|\, r>0, e\in S \}$ are one dimensional faces of $\Omega$. Choose a $3$ dimensional affine space $V$ in $\mathbb R^4$ which is transversal to $\bf w$, and project $\Omega$ into $V$ along $\bf w$. If we denote the projection by $p:\Omega\rightarrow V$, then $p(\Omega)\subset V$ is a $3$-dimensional  convex quasi-homogeneous affine domain.
Since every extreme point $e$  of $\Omega$  is projected to an extreme point  $p(e)$ of $p(\Omega)$ if $\{e+ r{\bf
w}\,|\, r>0\}$ is a one dimensional face,  $p(\Omega)$ cannot contain any complete line. (For,  if there is a complete line $l$ in  $\overline{p(\Omega)}$ then $p(e)+l$ should be also contained in  $\overline{p(\Omega)}$ by convexity.)
So there are linearly independent  {supporting} hyperplanes {of $p(\Omega)$,} $H'_1,H'_2,H'_3$, in $V$.
If we denote the hyperplanes of $\mathbb R^4$ which contain $p^{-1}(H'_1),p^{-1}(H'_2),p^{-1}(H'_3)$ by $H_1,H_2,H_3$ respectively in $\mathbb R^4$, then 
$H_1,H_2,H_3$ are linearly independent supporting hyperplanes of  $\Omega$. Now if we see $\Omega$ in $\mathbb {RP}^4$, then $H_1,H_2,H_3$ and $\partial_\infty \mathbb R^4$ are linearly independent supporting hyperplanes of the extreme point $w$. This  implies $w$ is a conic face of $\Omega$, which is a contradiction. 
In a similar manner, we can show the same thing for $z$.

So we can choose
 faces $A$ and $B$ whose closures contain properly a ray $\{\xi+
r{\bf w}\,|\, r>0\}$ and a ray $\{\xi'+ r{\bf z}\,|\, r>0\}$
respectively for some $\xi, \xi'\in S$.  {If either $A$ or $B$ is 3-dimensional, then it is a conic face, which is not allowed. Hence both $A$ and $B$ are
properly convex $2$-dimensional affine domains with a 1-dimensional
asymptotic cone.} This implies that every asymptotic cone point $\xi$ is contained in the boundary of two 2-dimensional faces $A_{\xi}$ and $B_{\xi}$ whose asymptotic directions are $w$ and $z$  respectively, since
the subgroup $G$ of $\Aut(\Omega)$ acts on $S$ transitively.

For each extreme point  $\xi$, we see that there does not exist any other proper face except $A_{\xi}$ and $B_{\xi}$ whose cloure contains $\{\xi\}$ properly, because any line segment in $\partial\Omega$ must be contained in the closure of a face having a ray with direction $\bf z$ or $\bf w$.  So  $A_{\xi}$ and $B_{\xi}$ are maximal and the only  faces of $\Omega$  whose dimension is neither $0$ nor $4$. By Proposition \ref{bdd-face} and Corollary \ref{AC-Face}, $\partial A_{\xi} \subset S$ and thus  $A_{\xi}$ is a strictly convex $2$-dimensional face of $\Omega$ whose boundary is a part of $S$.

Since there are no other nonzero-dimensional boundary faces of $\Omega$  except $A_{\xi}$'s and $B_{\xi}$'s,  either $A_{\xi}\cap A_{\xi'}=\emptyset$ or $A_{\xi}= A_{\xi'}$ holds and the same is true for $B_{\xi}$'s. Note that $A_{\xi}= A_{\xi'}$ if and only if $\xi' \in \partial A_{\xi}$. Let  $S_{\xi}$ be a subset of $S$ defined as follows: $$S_{\xi}=\bigcup_{\zeta\in\partial A_{\xi}} \partial B_{\zeta}$$
Then $S_{\xi}\cap S_{\xi'}=\emptyset$ for $\xi' \notin S_{\xi}$. Therefore $S = S_{\xi}$ for any extreme point  $\xi$ and thus   $\partial B_{\xi_1}\cap \partial A_{\xi_2} \neq \emptyset$ for any pair of extreme points  $\xi_1$  and $\xi_2$, since $S$ is connected.
Similary one can prove $$S=\bigcup_{\eta\in\partial B_{\xi}} \partial A_{\eta}.$$

If we consider  the subgroups $G_{A_{\xi}}$ ($G_{B_{\xi}}$,  respectively) of $G$ preserving $A_{\xi}$ ($B_{\xi}$,  respectively), then $G_{A_{\xi}}$($G_{B_{\xi}}$,  respectively) acts on $\partial A_{\xi}$($\partial B_{\xi}$,  respectively) transitively and thus the boundary of $A_{\xi}$ and  $B_{\xi}$ are both twice-differentiable, which means each of $A_{\xi}$ and  $B_{\xi}$ has an osculating ellipsoid at every boundary point by Remark \ref{Ben-oscullating}. There are two continuous maps $\rho_A$ and $\rho_B$ from $\mathbb R$ to $G_{A_{\xi}}$ and $G_{B_{\xi}}$, respectively, such that  $\{\rho_A(t)(\xi)\,|\, t\in \mathbb R\}=\partial A_{\xi}$ and $\{\rho_B(t)(\xi)\,|\, t\in \mathbb R\}=\partial B_{\xi}$.
 Then each of  $\rho_A(t)$-orbits and $\rho_B(t)$-orbits gives a foliation on $\overline\Omega \cap \mathbb R^4$ whose leaves are homeomorphic to $\partial A_{\xi}$ and $\partial B_{\xi}$, respectively.

Now we claim here that $A_{\xi}$ is a parabola. Since $\Omega$ is quasi-homogeneous and $\xi$ is an extreme point, there is a sequence of affine transformation $\{g_i\}\subset G$ such that the range of its limit singular projective transformation $g$ is $\{\xi\}$ by Lemma \ref{lem-saillant}. Since the kernel of $g$ does not intersect $\mathbb R^n$ by Lemma \ref{kernel},
$g_i(p)$ converges to $\xi$ for any point $p$ in $A_{\xi}$. Let $\xi'$ be the boundary point of $B_{\xi}$ such that $g_i(p)\in A_{\xi'}=A_{g_i(\xi)}$ and $g_i(\xi)\in \partial A_{\xi'}$ (the existence of $\xi'$ is guaranteed by the property $S=\bigcup_{\eta\in\partial B_{\xi}} \partial A_{\eta}$). 
Then for each $g_i$ 
there is $t_i\in \mathbb R$ such that $\rho_B(t_i)(\xi)=\xi'$. Then $\rho_B(t_i)^{-1}g_i(p)$ is in $A_{\xi}$  for all $i$.
 Since the sequence of  leaves $\{\rho_B(t)(\rho_B(t_i)^{-1}g_ip)\,|\, t\in \mathbb R\}=\{\rho_B(t)(g_ip)\,|\, t\in \mathbb R\}$ converges to $\{\rho_B(t)(\xi)\,|\, t\in \mathbb R\}=\partial B_{\xi}$, the leaf containing $\xi$, $\rho_B(t_i)^{-1}g_i(p)$ must converge to $\xi$. So we found a sequence of projective transformations $f_i=\rho_B(t_i)^{-1}g_i$ preserving $A_{\xi}$ such that an orbit $f_i(p)$ for $p\in A_{\xi}$  converge to $\xi \in \partial A_{\xi}$.
This implies that $A_{\xi}$  is a parabola due to Theorem \ref{hess-noncompt} stating that a strictly convex domain with a twice differentiable boundary is a parabola if there is an orbit accumulating at a boundary point.

Similarly, we can show that $B_{\xi}$ is also a parabola, by taking a sequence of projective transformations $\{\rho_A(t_i)\}\subset G_{A_{\xi}}$ such that $\rho_A(t_i)^{-1}g_i(B_{\xi})=B_{\xi}$ and  $\rho_A(t_i)^{-1}g_i(p')$ converges to $\xi$ for $p'\in B_{\xi}$.

Now we will show that $\Omega$ is affinely equivalent to $$\{(x_1,x_2,x_3,x_4) \in \mathbb R^4\,|\, x_2>x_1^{2}
,\ x_3>x_4^{2} \},$$ by proving that  any other $A_{\xi'}$ is obtained from $A_{\xi}$ by just an affine translation.

We first show that the isotropy subgroup $G_{\xi}<G$ fixing $\xi$ is not trivial. If we suppose that $G_{\xi}$ is trivial, then $G$ acts on $S$ simply transitively and thus $G$ is connected, which implies that $\Omega$ is homogeneous by Lemma 2.5 of \cite{Bt}. But the isotropy subgroup of a homogeneous properly convex affine domain cannot be trivial because any  element of $G$ sending a point of $\xi+AC(\Omega)$ to another point of the cone fixes $\xi$, which contradicts our assumption.   

Choose two vectors $\bf a$ and $\bf b$ such that
$\xi+{\bf a}$ is in the support of $A_{\xi}$ and $ \xi+{\bf b}$ is in the support of $ B_{\xi}$,
and $\{\bf a,\bf w,\bf z,\bf b\}$ is a basis for $\mathbb R^4$.
Then  there is a non-trivial element $g \in G_{\xi}$ such that 
$$g=\begin{pmatrix}
\delta & 0 & 0 & 0 \\
0 &\delta^2 & 0 & 0 \\
0 & 0 & \theta^2 & 0  \\
0 & 0 & 0 & \theta
\end{pmatrix}$$
with $\delta >0$ and $\theta >0$, if we normalize $\xi=(0,0,0,0)$ and the parabolas $\partial A_{\xi}$ and $\partial B_{\xi}$ by $x_2=x_1^2$ and $x_3=x_4^2$ respectively, since $g$ must preserve both $A_{\xi}$ and $B_{\xi}$. 

If $\delta=1$, then
$g\in G_{\zeta}$ for all $\zeta \in \partial A_{\xi}$, that is, 
\begin{equation}\label{delta}
g(\zeta)=\zeta, \quad g(B_{\zeta})=B_{\zeta}  \quad \text{for all}\quad \zeta \in \partial A_{\xi}.
\end{equation}  
So we can prove that $B_{\zeta}$ must be just a translation of  $B_{\xi}$ for any $\zeta \in \partial A_{\xi}$. Suppose not. Then there is $\zeta \in \partial A_{\xi}$ such that for some $r> 0$  and $c, d_1, d_2 \in \mathbb R$
$$\partial B_{\zeta}=\{(x_1,x_2,x_3,x_4) \in \mathbb R^4\,|\, x_3=r(x_4-c)^{2}-rc^2,\ x_4=d_1x_1+d_2x_2 \}+\zeta,$$ 
which is not equal to 
$${\partial B_{\xi}+\zeta}=\{(x_1,x_2,x_3,x_4) \in \mathbb R^4\,|\, {x_1=x_2=0, \,\,}x_3=x_4^{2}\}{+\zeta}.$$ 
But this implies that 
\begin{equation*}
	\begin{split}
		g(\partial B_{\zeta})=\{(x_1,x_2,x_3,x_4) \in \mathbb R^4\,|\, x_3=r(x_4-\theta c)^{2}-r\theta^2c^2,\ x_4=\theta(d_1x_1+d_2x_2) \}+\zeta,	
	\end{split}
\end{equation*} which contradicts (\ref{delta}).
Similarly, we also get that $A_{\zeta}$ must be just a translation of  $A_{\xi}$ for any $\zeta \in \partial B_{\xi}$ if $\theta=1$. Hence we may assume that  $\delta \neq 1$ and $0<\theta < 1$ by taking the inverse $g^{-1}$ if necessary.

Since $G_{B_{\xi}}$ is a closed Lie subgroup of $G$ and $G_{B_{\xi}}(\xi)=\partial B_{\xi}$, we can take an element $\eta $ in the Lie-subalgebra  $\mathfrak{g}_{B_{\xi}}$ of $\mathfrak{g}^0$  such that
the one parameter group $f_{t}=e^{t\eta}$ is non-trivial.	

\noindent {\bf Case 1. $f_{t}$ is hyperbolic:}
	 If $f_{t}$ is hyperbolic, then  we may assume that   $f_{t}(\xi)=\xi$ and thus we get  a one-parameter subgroup $g(t)$ of  $G_{\xi}$, 
$$f_{t}=g(t)=\begin{pmatrix}
e^{t\mu} & 0 & 0 & 0 \\
0 & e^{2t\mu} & 0 & 0 \\
0 & 0 & e^{2t\nu} & 0  \\
0 & 0 & 0 & e^{t\nu}
\end{pmatrix},$$
and another element $g'$ of $G_{B_{\xi}}$ such that $g'(\xi)=(0,0,1,1)$ by the transitivity of the action of $G$ on $S$. 
We can choose an element $f$ of $G_{B_{\xi}}$ among  $g(t)g'$ acting on the face $B_{\xi}$ as a parabolic transformation as in the following lemma. 

\begin{lemma}The linear part and translation part
of $f$ can be represented by 
$$L_f=\begin{pmatrix}
\alpha_1 & 0 & 0 & 0 \\
\beta_1 &\alpha_2^2 & 0 & 0 \\
\beta_2 & 0 & 1 & 2d  \\
\beta_3 & 0 & 0 & 1
\end{pmatrix},\quad
t_{f}=\begin{pmatrix}
0 \\
0\\
d^2 \\
d
\end{pmatrix}$$
for some positive real number $d$.
\end{lemma}
\begin{proof}Since $g'$ preserves $B_\xi$ and $\bf w, \bf z$ directions,
its linear part is of the form 
$$\begin{pmatrix}
* & 0 & 0 & 0 \\
* & * & 0 & 0 \\
* & 0 &  *  & * \\
* & 0 & 0 & * \end{pmatrix}$$
Then by choosing $t$ properly, the $(3,3)$ component of linear part $L_f$ of the product $f=g(t)g'$ is 1.
Since $f$ also preserves  $B_\xi$ and $\bf w, \bf z$ directions
$$L_f=\begin{pmatrix}
* & 0 & 0 & 0 \\
* & * & 0 & 0 \\
* & 0 &  1  & a \\
* & 0 & 0 & b \end{pmatrix}$$
Since $g'$ sends $\xi$ to $(0,0,1,1)$ the translation part $t_f$ of $f$ is 
$(0,0, e^{2t\nu}, e^{t\nu})=(0,0,d^2,d)$ for some $d$.
Now using $f(\partial B_\xi)=\partial B_\xi=\{(0,0,x^2,x)\}$ we get
$$ X_3= x^2+ ax +d^2, X_4=bx + d, X_4^2=X_3.$$
Hence $b=1, a=2d$ as desired.
\end{proof}

If $\mu, \nu>0$ or $\mu, \nu <0$ then $\xi\in \Lambda_{G^0}$ and thus $\Omega$ is homogeneous  by Proposition \ref{homo}, {since $\Aut(\Omega)$ is irreducible by (iii) of Corollary \ref{AC-Face}}, which implies that $\Omega$ must be  affinely equivalent to $$\{(x_1,x_2,x_3,x_4) \in \mathbb R^4\,|\, x_2>x_1^{2}
	,\ x_3>x_4^{2} \}$$
by the well-known classification of homogeneous projective convex domains in $\mathbb R^4$. (See p.282 of \cite{Rogers} for a reference.)
	 So we may assume that $\mu>0$ and $\nu <0$. Then  by considering 
\begin{equation}\label{fn}
	 g(t_n)f(\xi)=f^n(\xi)=(0,0,n^2d^2, nd)
	 \end{equation}
	 and
	 \begin{equation}\label{f-n}
	  g(t_n)f^{-1}(\xi)=f^{-n}(\xi)=(0,0,n^2d^2, -nd)
	 \end{equation}
	  for the sequence $t_n=\frac{\ln n}{\nu}, n>0$,
	 we  can show that 
	 $$\alpha_1^2=\alpha_2^2, \quad \beta_1=\beta_2=\beta_3=0$$
	 as follows.  
Since the linear part of $f^n$ is
$$L_{f^n}=\begin{pmatrix}
\alpha_1^n & 0 & 0 & 0 \\
\beta_1^* &\alpha_2^{2n} & 0 & 0 \\
\beta_2^* & 0 & 1 & 2nd  \\
\beta_3^* & 0 & 0 & 1
\end{pmatrix}$$  
where
\begin{equation*} 
\begin{split}
\beta_1^*&=\beta_1(\alpha_1^{n-1}+\alpha_1^{n-2}\alpha_2^2+\cdots+\alpha_1\alpha_2^{2n-4}+\alpha_2^{2n-2})\\
\beta_2^*&=\beta_2(1+\alpha_1+\alpha_1^2+\cdots+\alpha_1^{n-1})+2d\beta_3(n-1+(n-2)\alpha_1+\cdots+2\alpha_1^{n-3}+\alpha_1^{n-2})\\
\beta_3^*&=\beta_3(1+\alpha_1+\alpha_1^2+\cdots+\alpha_1^{n-1}),
\end{split}
\end{equation*}	
we get 
\begin{equation*} 
f^n(x,x^2,0,0)=(\alpha_1^nx,\beta_1^*x+\alpha_2^{2n}x^2,\beta_2^*x,\beta_3^*x)+(0,0,n^2d^2,nd) \\
\end{equation*}	
and thus  $f^n(\partial A_{\xi})$ are the set of all the points $(X_1,X_2,X_3,X_4)+(0,0,n^2d^2,nd) $ such that
\begin{equation*}
\begin{split}
X_2&=\frac{\beta_1}{\alpha_1}(1+\frac{\alpha_2^2}{\alpha_1}+\frac{\alpha_2^4}{\alpha_1^2}+\cdots+\frac{\alpha_2^{2n-2}}{\alpha_1^{n-1}})X_1+\frac{\alpha_2^{2n}}{\alpha_1^{2n}}X_1^2\\
X_3&=\frac{\beta_2}{\alpha_1}(1+\frac{1}{\alpha_1}+\frac{1}{\alpha_1^2}+\cdots+\frac{1}{\alpha_1^{n-1}})X_1+\frac{2d\beta_3}{\alpha_1}(\frac{1}{\alpha_1}+\frac{2}{\alpha_1^2}+\cdots+\frac{n-1}{\alpha_1^{n-1}})X_1\\
X_4&=\frac{\beta_3}{\alpha_1}(1+\frac{1}{\alpha_1}+\frac{1}{\alpha_1^2}+\cdots+\frac{1}{\alpha_1^{n-1}})X_1.
\end{split}
\end{equation*}	
On the other hand, the elements of $g(t_n)f(\partial A_{\xi})$ are
\begin{equation*}
\begin{split} 
g(t_n)f(x,x^2,0,0)&=(Y_1,Y_2,Y_3,Y_4)+(0,0,n^2d^2,nd) \\
Y_2&=\frac{\beta_1}{\alpha_1}e^{t_n\mu} Y_1+\frac{\alpha_2^2}{\alpha_1^2}Y_1^2\\
Y_3&=\frac{\beta_2}{\alpha_1}e^{t_n(2\nu-\mu)} Y_1\\
Y_4&=\frac{\beta_3}{\alpha_1}e^{t_n(\nu-\mu)} Y_1.
\end{split}
\end{equation*}	
Since $g(t_n)f(A_{\xi})$ equals $f^n(A_{\xi})$ by (\ref{fn}), 
we see
$\alpha_1^2=\alpha_2^2 $, 
and if $\beta_1\neq 0$
$$e^{t_n\mu}=1+\alpha_1+\alpha_1^2+\cdots+\alpha_1^{n-1}.$$ But $\lim_{n \to \infty}e^{t_n\mu}=0$ and $1+\alpha_1+\alpha_1^2+\cdots+\alpha_1^{n-1}$ cannot converges to $0$, which implies $\beta_1=0$. 

{Now we get $$L_f=\begin{pmatrix}
\alpha_1 & 0 & 0 & 0 \\
0 &\alpha_1^2 & 0 & 0 \\
\beta_2 & 0 & 1 & 2d  \\
\beta_3 & 0 & 0 & 1
\end{pmatrix},\quad
t_{f}=\begin{pmatrix}
0 \\
0\\
d^2 \\
d
\end{pmatrix}$$
and
$$L_{f^{-1}}=\begin{pmatrix}
1/\alpha_1 & 0 & 0 & 0 \\
0 &1/\alpha_1^2 & 0 & 0 \\
(2d\beta_3-\beta_2)/\alpha_1 & 0 & 1 & -2d  \\
-\beta_3 /\alpha_1& 0 & 0 & 1
\end{pmatrix},\quad
t_{f}=\begin{pmatrix}
0 \\
0\\
d^2 \\
-d
\end{pmatrix}.$$
Thus
$$L_{f^{-n}}=\begin{pmatrix}
1/\alpha_1^n & 0 & 0 & 0 \\
0 &1/\alpha_1^{2n} & 0 & 0 \\
\tilde{\beta}_2 & 0 & 1 & -2nd  \\
\tilde{\beta}_3 & 0 & 0 & 1
\end{pmatrix}, \ n>0$$  
where
\begin{equation*} 
\begin{split}
\tilde{\beta}_2&=-\frac{\beta_2}{\alpha_1}(1+\frac{1}{\alpha_1}+\frac{1}{\alpha_1^2}+\cdots+\frac{1}{\alpha_1^{n-1}})+\frac{2d\beta_3}{\alpha_1}(n+\frac{n-1}{\alpha_1}+\frac{2}{\alpha_1^2}+\cdots+\frac{1}{\alpha_1^{n-1}})\\
\tilde{\beta}_3 &=-\frac{\beta_3}{\alpha_1}(1+\frac{1}{\alpha_1}+\frac{1}{\alpha_1^2}+\cdots+\frac{1}{\alpha_1^{n-1}}).
\end{split}
\end{equation*}
So we have
\begin{equation*} 
f^{-n}(x,x^2,0,0)=(\frac{1}{\alpha_1^n}x,\frac{1}{\alpha_1^{2n}}x^2,\tilde{\beta}_2x,\tilde{\beta}_3 x)+(0,0,n^2d^2,-nd) \\
\end{equation*}	
and thus  $f^{-n}(\partial A_{\xi})$ are the set of all the points $(W_1,W_2,W_3,W_4)+(0,0,n^2d^2,-nd) $ such that
\begin{equation*}
\begin{split}
W_2&=W_1^2\\
W_3&=-\beta_2(1+\alpha_1+\alpha_1^2+\cdots+\alpha_1^{n-1})W_1+2d\beta_3(1+2\alpha_1+3\alpha_1^2+\cdots+n\alpha_1^{n-1})W_1        \\
W_4&=-\beta_3(1+\alpha_1+\alpha_1^2+\cdots+\alpha_1^{n-1})W_1.
\end{split} 
\end{equation*}	
Note that 
\begin{equation*}
\begin{split} 
g(t_n)f^{-1}(x,x^2,0,0)&=\begin{pmatrix}
e^{t_n\mu} & 0 & 0 & 0 \\
0 & e^{2t_n\mu} & 0 & 0 \\
0 & 0 & e^{2t_n\nu} & 0  \\
0 & 0 & 0 & e^{t_n\nu}
\end{pmatrix}
\begin{pmatrix}
\frac{1}{\alpha_1}x \\
\frac{1}{\alpha_1^2}x^2 \\
\frac{2d\beta_3-\beta_2}{\alpha_1}x+d^2\\
-\frac{\beta_3}{\alpha_1}x-d
\end{pmatrix}
\\
&=(Z_1,Z_2,Z_3,Z_4)+(0,0,n^2d^2,-nd) \\
\end{split}
\end{equation*}
where
\begin{equation*}
\begin{split} 
Z_2&=Z_1^2\\
Z_3&=(2d\beta_3-\beta_2)e^{t_n(2\nu-\mu)} Z_1\\
Z_4&=-\beta_3e^{t_n(\nu-\mu)} Z_1.
\end{split}
\end{equation*}}

{Suppose $\beta_3\neq 0$. Since $$g(t_n)f(A_{\xi})=f^n(A_{\xi}), \ g(t_n)f^{-1}(A_{\xi})=f^{-n}(A_{\xi})$$ by (\ref{fn}) and  (\ref{f-n}),  
$$e^{t_n(\nu-\mu)}=1+\frac{1}{\alpha_1}+\frac{1}{\alpha_1^2}+\cdots+\frac{1}{\alpha_1^{n-1}}$$
and 
$$e^{t_n(\nu-\mu)}=1+\alpha_1+\alpha_1^2+\cdots+\alpha_1^{n-1}$$ 
must hold simultaneously, which implies $\alpha_1=1$. But this  is impossible because 
$$e^{t_n(\nu-\mu)}=ne^{-t_n\mu}$$
and $1+\alpha_1+\alpha_1^2+\cdots+\alpha_1^{n-1}=n$ if $\alpha_1=1$.}

{$\beta_2=0$ is proved similarly, since if we suppose $\beta_2\neq 0$ then  
$$e^{t_n(2\nu-\mu)}=1+\frac{1}{\alpha_1}+\frac{1}{\alpha_1^2}+\cdots+\frac{1}{\alpha_1^{n-1}}$$
and 
$$e^{t_n(2\nu-\mu)}=1+\alpha_1+\alpha_1^2+\cdots+\alpha_1^{n-1}$$ 
must hold simultaneously, which is impossible.}

Up to now we have shown that if  $f_{t}$ is hyperbolic then
$$L_f=\begin{pmatrix}
\alpha_1 & 0 & 0 & 0 \\
0 &\alpha_1^2 & 0 & 0 \\
0 & 0 & 1 & 2d  \\
0 & 0 & 0 & 1
\end{pmatrix},$$
which implies that $A_{f(\xi)}$ and $A_{f^{-1}(\xi)}$ are just  translations of $A_{\xi}$. Since for any point $\zeta\neq \xi$ of $\partial B_{\xi}$,  $A_{\zeta}$ is either $g(t)(A_{f(\xi)})$ or $g(t)(A_{f^{-1}(\xi)})$
for some $t\in \mathbb R$, $A_{\zeta}$'s are all  translations of $A_{\xi}$.  Hence we can conclude that  $\Omega$ is affinely equivalent to $$\{(x_1,x_2,x_3,x_4) \in \mathbb R^4\,|\, x_2>x_1^{2}
,\ x_3>x_4^{2} \}$$ in this case.

\noindent {\bf Case 2. $f_{t}$ is  parabolic:} If $f_{t}$ is parabolic, then the linear part and the translation part of $f_t$  can be represented by 
$$L_{f_t}=\begin{pmatrix}
\alpha_1(t) & 0 & 0 & 0 \\
\beta_1(t) &\alpha_2(t)^2 & 0 & 0 \\
\beta_2(t) & 0 & 1 & 2t  \\
\beta_3(t) & 0 & 0 & 1
\end{pmatrix},\quad
t_{f_t}=\begin{pmatrix}
0 \\
0\\
t^2 \\
t
\end{pmatrix}.$$

Firstly, we show that   $\alpha_1(t)^2=\alpha_2(t)^2$ for all $n$:
	Since $f_{\theta^nt}(\xi)=g^n(f_t(\xi))=(0,0,\theta^{2n}t^2,\theta^nt)$, there is an element 
	$$h_n(t)=\begin{pmatrix}
	\delta_n(t) & 0 & 0 & 0 \\
	0 &\delta_n(t)^2 & 0 & 0 \\
	0 & 0 & \theta_n(t)^2 & 0  \\
	0 & 0 & 0 & \theta_n(t)
	\end{pmatrix} \in G_{\xi}$$
	such that $f_{\theta^nt}h_n(t)=g^nf_t$. Hence   we get   that for all $t\in \mathbb R$
	\begin{equation}\label{isotropy-eq}
				\alpha_1(\theta^nt)\delta_n(t)=\delta^n\alpha_1(t),\quad \alpha_2(\theta^nt)^2\delta_n(t)^2=\delta^{2n}\alpha_2(t)^2,\quad \theta_n(t)=\theta^{n}.
		\end{equation}
From $$\lim_{n \to \infty}\alpha_1(\theta^nt)=\alpha_1(0)\in \mathbb R^*,\quad  \lim_{n \to \infty}\alpha_2(\theta^nt)=\alpha_2(0)\in \mathbb R^*,$$ we get
	\begin{equation}\label{isotroy-f}
	\lim_{n \to \infty}\frac{\delta^n}{\delta_n(t)}=\frac{\alpha_1(0)}{\alpha_1(t)}=\frac{1}{\alpha_1(t)}, \quad \lim_{n \to \infty}\frac{\delta^{2n}}{\delta_n(t)^2}=\frac{\alpha_2(0)^2}{\alpha_2(t)^2}=\frac{1}{\alpha_2(t)^2}
		\end{equation}
		 and thus $\alpha_1(t)^2=\alpha_2(t)^2$ for all $t$.

Suppose that $\delta_n(t)=\delta_n(0)$ for all $t$ and all $n$. Then $\alpha_1(t)=\alpha_1(0)=1$ because 
	$$1=\frac{\alpha_1(0)}{\alpha_1(0)}=\lim_{n \to \infty}\frac{\delta^n}{\delta_n(0)}=\lim_{n \to \infty}\frac{\delta^n}{\delta_n(t)}=\frac{\alpha_1(0)}{\alpha_1(t)}$$
	by (\ref{isotroy-f}). That is, if 
	 	$$h_n(t)=\begin{pmatrix}
	\delta_n(0) & 0 & 0 & 0 \\
	0 &\delta_n(0)^2 & 0 & 0 \\
	0 & 0 & \theta^{2n} & 0  \\
	0 & 0 & 0 & \theta^n
	\end{pmatrix}$$
 for all $t$, then
$$L_{f_t}=\begin{pmatrix}
1 & 0 & 0 & 0 \\
\beta_1(t) & 1 & 0 & 0 \\
\beta_2(t) & 0 & 1 & 2t  \\
\beta_3(t) & 0 & 0 & 1
\end{pmatrix}.$$	
So from the fact that $f_{t+t'}=f_tf_{t'}$, we get 
\begin{lemma}
$$\beta_1(t)=t\beta_1(1),\quad \beta_3(t)=t\beta_3(1). $$
\end{lemma}
\begin{proof}$f_{t+t'}=f_tf_{t'}$ implies that
$$\beta_1(t)+\beta_1(s)=\beta_1(t+s),\beta_1(0)=0,$$
$$\beta_3(t)+\beta_3(s)=\beta_3(t+s),\beta_3(0)=0.$$ We show that $\beta_1$ and $\beta_3$ are linear.
Note that
$$q\beta_1(t)=\beta_1(p\frac{q}{p}t)=p\beta_1(\frac{q}{p}t),\ \beta_1(t)+\beta_1(-t)=\beta_1(0)=0.$$
Hence $\beta_1(\frac{q}{p}t)=\frac{q}{p}\beta_1(t)$ and $\beta_1(rt)=r\beta_1(t)$ for any rational number $r$. By continuity of $\beta_1$ and the density of rational numbers in real numbers implies 
$$\beta_1(Rt)=R\beta_1(t)$$ for any real number $R$, hence $\beta_1(t)=t\beta_1(1)$. The same argument gives $\beta_3(t)=t\beta_3(1)$.
\end{proof}
Since $g^nf_1(\xi)=f_{\theta^n}(\xi)$, the face $g^nf_1(A_{\xi})$ equals the face $f_{\theta^n}(A_{\xi})$. 
But
\begin{equation*} 
\begin{split}
g^nf_1(\partial A_{\xi})&=\{g^nf_1(x,x^2,0,0)\,|\, x\in \mathbb R\}\\
&=\{(\delta^nx,\delta^{2n}x^2+\delta^{2n}\beta_1(1)x,\theta^{2n}\beta_2(1)x,\theta^n\beta_3(1)x)\,|\, X\in \mathbb R\}+(0,0,\theta^{2n},\theta^n) \\
&=\{(X,X^2+\delta^{n}\beta_1(1)X,\frac{\theta^{2n}}{\delta^{n}}\beta_2(1)X,\frac{\theta^n}{\delta^{n}}\beta_3(1)X)\,|\, X\in \mathbb  R\}+(0,0,\theta^{2n},\theta^n) \\
\end{split}
\end{equation*}	
and
\begin{equation*} 
\begin{split}
f_{\theta^n}(\partial A_{\xi})&=\{f_{\theta^n}(x,x^2,0,0)\,|\, x\in \mathbb R \}\\
&=\{(x,x^2+\beta_1(\theta^n)x,\beta_2(\theta^n)x,\beta_3(\theta^n)x)\,|\, x\in \mathbb R\}+(0,0,\theta^{2n},\theta^n)\\
&=\{(x,x^2+\theta^n\beta_1(1)x,\beta_2(\theta^n)x,\theta^n\beta_3(1)x)\,|\, x\in \mathbb R\}+(0,0,\theta^{2n},\theta^n).
\end{split}
\end{equation*}	
This implies $\delta=\theta=1$, 
which is a contradiction. 

Now we have a non-constant continuous function $\delta_n(t)$ from $\mathbb R$ to $\mathbb R$ for some $n$. 
 So there is an element $h_n(t_0)\neq g^n$ of $G_{\xi}$  such that $\delta_n(t_0)\neq \delta^n$ and $$h_n(t_0)=\begin{pmatrix}
\delta_n(t_0) & 0 & 0 & 0 \\
0 &\delta_n(t_0)^2 & 0 & 0 \\
0 & 0 & \theta^{2n} & 0  \\
0 & 0 & 0 & \theta^n
\end{pmatrix} $$ 
by (\ref{isotropy-eq}),
and thus we get an element $h=g^{-n}h_n(t_0)\in G_{\xi}$ such that 
$$h=\begin{pmatrix}
\delta^{-n}\delta_n(t_0) & 0 & 0 & 0 \\
0 &\delta^{-2n}\delta_n(t_0)^2 & 0 & 0 \\
0 & 0 & 1 & 0  \\
0 & 0 & 0 & 1
\end{pmatrix}\neq \begin{pmatrix}
1 & 0 & 0 & 0 \\
0 &1 & 0 & 0 \\
0 & 0 & 1 & 0  \\
0 & 0 & 0 & 1
\end{pmatrix}.  $$ 
From the existence of such an element of $G_{\xi}$, using a similar argument as before  we can conclude that $\Omega$ is affinely equivalent to $$\{(x_1,x_2,x_3,x_4) \in \mathbb R^4\,|\, x_2>x_1^{2}
,\ x_3>x_4^{2} \}.$$
\end{proof}

\begin{Prop}\label{dim4-asy4}Let $\Omega$ be a properly convex
quasi-homogeneous affine domain in $\mathbb R^4$ with
4-dimensional asymptotic cone. Then $\Omega $ is a  cone and
affinely equivalent to one of the following:
\begin{enumerate}
\item[\rm(i)]an elliptic cone,

\item[\rm(ii)]a non-elliptic strictly convex cone,
\item[\rm(iii)] a double cone over a triangle, i.e.,$$\{(x_1,x_2,x_3,x_4) \in \mathbb R^4\,|\, x_i>0 \text{ for }
i=1,2,3,4 \},$$
\item[\rm(iv)]a double cone over an ellipse,
\item[\rm(v)]a double cone over a non-elliptic strictly convex
domain,
\item[\rm(vi)]a cone over a 3-dimensional non-strictly convex
indecomposable projective domain.
\end{enumerate}
\end{Prop}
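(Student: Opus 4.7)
The plan has two stages: first, to show $\Omega$ itself is a cone, and then to classify the cone by its $3$-dimensional projective base. For the first stage, I invoke Vey's theorem (stated in the paper immediately before Theorem \ref{thm-folliation}): a quasi-homogeneous properly convex affine domain whose asymptotic cone has the same dimension as the domain is itself a cone. Since $\dim\AC(\Omega)=4=\dim\Omega$, this applies, and after an affine change of coordinates we may take the apex of $\Omega$ to be the origin. Then $\Omega$ is the open affine cone in $\br^4$ over a $3$-dimensional properly convex projective domain $B\subset\RP^3$, and the linear part of $\Aut(\Omega)$ descends to a syndetic action on $B$, so $B$ is a $3$-dimensional quasi-homogeneous properly convex projective domain.

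For the second stage I classify such $B$ by strict convexity and decomposability; each case will produce one of the listed types. If $B$ is strictly convex, Proposition \ref{strictlyCV} identifies $B$ as an ellipsoid, giving the elliptic cone (i), or as a non-elliptic strictly convex domain, giving (ii). If $B$ is decomposable, $B=B_1\dot{+}B_2$ with $\dim\langle B_1\rangle+\dim\langle B_2\rangle=2$, and by Theorem \ref{prop-benz}(iii) each $B_i$ is quasi-homogeneous. When $\dim B_1=0$ and $\dim B_2=2$, $B$ is a projective cone over a $2$-dimensional quasi-homogeneous properly convex projective domain $B_2$; by Benz\'ecri's $2$-dimensional classification, $B_2$ is a triangle, an ellipse, or a non-elliptic strictly convex domain, and the corresponding cones $\Omega$ fall in (iii), (iv), and (v) respectively. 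When both $B_i$ are $1$-dimensional segments, $B$ is a $3$-simplex (also a projective cone over a triangle), again giving (iii). Finally, if $B$ is indecomposable but not strictly convex, $\Omega$ is case (vi) by definition.

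The main obstacle lies in the decomposable subcase, where one must verify that the listed splittings are the only possibilities and that the combined cone structure matches the ``double cone'' descriptions in (iii)--(v). This uses Proposition \ref{bdd-face} to forbid bounded nonzero-dimensional faces and the equivalence in Theorem \ref{section-benz}(ii) between conic faces and convex summands, which together force the second apex direction in $\Omega$ to come from the projective cone apex of $B$ inside $\RP^3$. Case (vi) is also delicate: one must accept the existence of indecomposable non-strictly convex $3$-dimensional quasi-homogeneous properly convex projective domains as a single already-classified class (essentially the known classification of such $3$-dimensional projective domains from \cite{Ben} and subsequent work), and check that quasi-homogeneity of $\Omega$ forces no further internal subdivision of case (vi).
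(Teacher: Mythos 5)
Your proposal follows essentially the same route as the paper: Vey's theorem reduces $\Omega$ to a cone over a $3$-dimensional quasi-homogeneous properly convex projective base, which is then classified by strict convexity versus the presence of a $2$-dimensional (conic) face, the latter being an ellipse, a triangle, or a non-elliptic strictly convex domain. Your reorganization of the non-strictly-convex case via decomposability rather than directly via $2$-dimensional faces is equivalent by Theorem \ref{section-benz}(ii), which you correctly invoke, so the argument matches the paper's.
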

\begin{proof}
By Vey \cite{V3} or Theorem \ref{thm-folliation}, $\Omega$ is a cone onto a 3-dimensional properly convex quasi-homogeneous projective domain $P\Omega$ in $\mathbb {RP}^3$. If $P\Omega$ is strictly convex, then $\Omega$ is either (i) or (ii). If $P\Omega$ has a 2-dimensional face $F$, then $F$ is a conic face of $P\Omega$ and thus quasi-homogeneous projective domain. Since $F$ is an ellipse if it is strictly convex and its boundary is twice differentiable and $F$ is a triangle if it is not strictly convex, $\Omega$ is (iii) or (iv) or (v). (vi) is the case when $\Omega$ is neither strictly convex nor has no 2-dimensional face.
\end{proof}
Cones in (vi) are all actually divisible cones, which is immediate from
the following proposition.
\begin{Prop}\label{indecNonstr3}Let $\Omega$ be a properly convex
quasi-homogeneous projective domain in $\mathbb {RP}^3$ which is
indecomposable and non-strictly convex. Then
$\Aut_\text{proj}(\Omega)$ is irreducible and discrete.
\end{Prop}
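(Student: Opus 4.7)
The plan is to treat irreducibility and discreteness separately, using a common setup: the face lattice of $\Omega$ is severely constrained. Since $\Omega$ is quasi-homogeneous and indecomposable, Theorem \ref{section-benz}(ii) forbids any conic face. Because every codimension-one face in $\mathbb{RP}^3$ is conic by Definition \ref{def-Ben}(ii), $\Omega$ has no $2$-dimensional face. Non-strict convexity then forces at least one $1$-dimensional face $F = \overline{pq}$, and such $F$ is necessarily non-conic. In $\mathbb{RP}^3$ two distinct hyperplanes both containing the line $\langle F \rangle$ intersect exactly in $\langle F \rangle$, so the existence of two such supporting hyperplanes would make $F$ conic; hence every $1$-dimensional face of $\Omega$ admits a \emph{unique} supporting hyperplane, the rigidity we shall exploit.

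For irreducibility, suppose $G = \Aut_{\text{proj}}(\Omega)$ preserves a proper projective subspace $V \subsetneq \mathbb{RP}^3$. By the projective analogue of Proposition \ref{Vey}(iii), $V$ is disjoint from $\Omega$ and $V \cap \bar\Omega$ is a union of closed faces. Treat first $\dim V = 2$: then $V$ is a $G$-invariant supporting hyperplane, and since $\Omega$ has no $2$-face, $V \cap \bar\Omega$ is a union of $1$-faces and extreme points. Each $1$-face in $V \cap \bar\Omega$ has $V$ as its unique supporting hyperplane, so $G$ permutes a distinguished collection of $1$-faces. Applying Lemma \ref{lem-saillant} together with Lemma \ref{singular} to an orbit $g_i(x)$ accumulating at some boundary face $F'$ not in $V$, the range $\langle F' \rangle$ of the singular limit is not contained in $V$, contradicting $G$-invariance of $V$. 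The cases $\dim V \in \{0,1\}$ reduce to the hyperplane case by duality: the dual convex domain $\Omega^*$ is again properly convex, quasi-homogeneous, indecomposable, and non-strictly convex, and a $G$-invariant subspace of dimension $k$ dualizes to a $G$-invariant subspace of dimension $2-k$.

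For discreteness, assume $G^0$ is nontrivial; then $G^0$ is a positive-dimensional connected Lie subgroup, normal in $G$, and by Lemma \ref{inv} the limit set $\Lambda_{G^0}$ is $G$-invariant. A one-parameter subgroup of $G^0$ cannot act with only elliptic orbits on $\Omega$: an interior fixed point together with the already-established irreducibility would, via Proposition \ref{strictlyCV}, force $\Omega$ to be an ellipsoid and thus strictly convex, contradicting our hypothesis. So $\Lambda_{G^0} \neq \emptyset$. Transposing the concatenation argument of Proposition \ref{homo} into the projective setting---using the irreducibility above in place of the affine irreducibility hypothesis, and building one-parameter subgroups $g_E(t)$ preserving each face $E \subset \Lambda_{G^0}$ exactly as in that proof---we conclude that $G^0$ acts transitively on $\Omega$, so $\Omega$ is homogeneous. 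The classification of homogeneous properly convex projective domains in $\mathbb{RP}^3$ (see, for instance, p.~282 of \cite{Rogers}) permits only the projective ball (strictly convex) and the decomposable examples---simplices, cones over triangles, cones over ellipses, and the like. None is both indecomposable and non-strictly convex, which is a contradiction.

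The main obstacle is the transposition of Proposition \ref{homo} to the projective setting combined with a careful enumeration of the dimension-$3$ homogeneous cases; the irreducibility step is essentially formal once the face structure is pinned down, but verifying that the discreteness dichotomy collapses in the remaining corners is what consumes the bulk of the work.
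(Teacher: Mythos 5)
Your setup (no conic face by indecomposability, hence no $2$-face, hence a $1$-face with a unique supporting hyperplane) is sound, but the core of the irreducibility argument has two genuine gaps. First, in the invariant-hyperplane case the contradiction is not established: from $g_i(V)=V$ you cannot conclude that the range $R(g)=\langle F'\rangle$ of a singular limit must lie in $V$ — invariance of $V$ only controls $g(V\setminus K(g))$, and depending on whether $K(g)$ meets $V$ this gives at most that $\bar F'$ meets $V$, which is not the contradiction you claim. Second, and more seriously, your duality reduction does nothing for the case $\dim V=1$: a $G$-invariant line dualizes to a $G$-invariant line ($2-1=1$), so the hardest case is never treated. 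That case is exactly where the paper does its real work: if $L\cap\bar\Omega$ is a segment $l$, one produces a second, disjoint segment $l'$ (using that $\Omega$ is non-strictly convex, has no $2$-face, and no extreme point is conic), takes $g_i$ with singular limit $g$ whose range is $\langle l'\rangle$, deduces $K(g)=L$ from invariance and disjointness, and then Lemma \ref{supker} makes $l$ a conic face — contradicting indecomposability. The point case is handled by a similar kernel-counting argument. Nothing in your proposal substitutes for this.

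On discreteness your route is both overbuilt and flawed in one step: Proposition \ref{strictlyCV} concerns \emph{strictly convex} quasi-homogeneous domains and \emph{boundary} fixed points, so it cannot be invoked to say that an interior fixed point of a one-parameter subgroup forces an ellipsoid (a compact one-parameter subgroup fixing an interior point is not excluded this way; one should instead use properness of the action, as in Proposition \ref{noncpt}, to get noncompactness and hence a nonempty limit set). You then propose to transpose Proposition \ref{homo} to the projective setting and to classify homogeneous domains in $\mathbb{RP}^3$; this could be made to work but is a substantial amount of unverified argument. The paper gets discreteness in one line: once irreducibility is known and $\Omega$ is not homogeneous, Proposition 4.2 of \cite{Bt} gives discreteness directly. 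I would recommend repairing the irreducibility argument along the paper's lines (in particular via Lemma \ref{supker} for the segment case) and replacing your discreteness argument by the citation of Benoist.
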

\begin{proof}
Suppose $\Aut_\text{proj}(\Omega)$ is reducible and $L$ is a
stable projective subspace of $\mathbb {RP}^3$. Then
$$L\cap\Omega=\emptyset \text{ and }
L\cap\overline\Omega\neq\emptyset,$$ by Vey \cite{V3}. Since
$\Omega$ has no 2-dimensional face, $L\cap\overline\Omega$ is a
point or a closed line segment.

If $L\cap\overline\Omega$ is a closed line segment $l$, then we
can choose a one dimensional projective subspace $L'$ such that
$L'\cap\overline\Omega$ is a line segment $l'$ such that
$l\cap\l'=\emptyset$ : Since $\Omega$ is not strictly convex,
there are infinitely many 1-dimensional faces. They cannot
intersect in their interior because $\Omega$ cannot have
2-dimensional face and only two faces can meet at their end points
because any extreme point cannot be a conic point by the
indecomposability of $\Omega$. Now we consider a sequence of
projective transformation $\{g_i\}$ in $\Aut_\text{proj}(\Omega)$
which converges to a singular projective transformation $g$ whose
range $R(g)$ is $L'$. By stability of $L$ and
$l\cap\l'=\emptyset$, the kernel $K(g)$ must be $L$, which implies
that $l$ is a conic face of $\Omega$ by Lemma \ref{supker}. This contradicts that
$\Omega$ is indecomposable.

So $L\cap\overline\Omega$ is a point $\xi$ in $\partial\Omega$.
Similarly we can find infinitely many maximal closed line segments
which do not contain $\xi$. This time we consider a sequence of
projective transformation $\{g_i\}$ in $\Aut_\text{proj}(\Omega)$
which converges to a singular projective transformation $g$ whose
range $R(g)$ is $\{\xi\}$. Since $\xi$ is a fixed point, every
maximal closed line segment which is disjoint from $\{\xi\}$ should
be in the kernel $K(g)$, which is a contradiction.

Up to now we've proved that $\Aut_\text{proj}(\Omega)$ is
irreducible. The discreteness of $\Aut_\text{proj}(\Omega)$
follows immediately from the irreducibility of
$\Aut_\text{proj}(\Omega)$ by Proposition 4.2 of \cite{Bt}, since $\Omega$ is not
homogeneous.
\end{proof}

Indecomposable non-strictly convex projective divisible domains in
$\mathbb {RP}^3$ were studied by Y. Benoist in \cite{Bt4}.

\section{remarks}
We have seen in the previous section that the Markus conjecture is true for convex affine manifolds when the dimension is $\leq 5$.  Theorem \ref{Markus} implies that if  $\Lambda_{\Aut^0(\Omega)}\neq \emptyset$ for any quasi-homogeneous domain $\Omega \neq  \br^n$, then Markus conjecture is completely solved in convex case. So far, we have the following observation.
\begin{Prop}\label{noncpt}  Let $\Omega$ be a properly convex
quasi-homogeneous domain in  $\br^n$. Let $G^0$ be the identity
component of $G=\Aut(\Omega)$. Then $G^0$ is noncompact.
\end{Prop}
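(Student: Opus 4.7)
The plan is a proof by contradiction: assume $G^0$ is compact and derive a contradiction with Vey's structural results already recorded above.

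First, I would produce a common fixed point of $G^0$ in $\Omega$ by Haar averaging. Fix any $x_0 \in \Omega$. The compact orbit $G^0\cdot x_0$ lies inside the open convex set $\Omega$, and the center of mass
\[ p \;:=\; \int_{G^0} g(x_0)\, d\mu(g), \]
where $\mu$ is normalized Haar measure on $G^0$, is a convex combination of points of this orbit and therefore lies in $\Omega$. Because affine maps commute with integration against a probability measure and Haar measure is left-invariant, $p$ is fixed by every element of $G^0$.

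Next, I would analyze the common fixed set $V := \operatorname{Fix}(G^0)\subseteq \br^n$. As an intersection of affine fixed sets of individual elements of $G^0$, the set $V$ is an affine subspace of $\br^n$, and $p\in V\cap\Omega$. Since $G^0$ is the identity component of $G$, it is normal in $G$; a direct computation (for $g\in G$, $y\in V$, $h\in G^0$, write $h(gy)=g(g^{-1}hg)(y)=gy$) shows $V$ is $G$-invariant. Proposition \ref{Vey}(iii) now forces the conclusion: every $G$-invariant proper affine subspace is disjoint from $\Omega$, so $V\cap\Omega\ne\emptyset$ compels $V=\br^n$. Hence every element of $G^0$ acts as the identity on $\br^n$, so $G^0=\{e\}$ and $G$ is a discrete subgroup of $\Aff(n,\br)$.

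Finally, since $\Aut(\Omega)\subseteq \Aut_{\text{proj}}(\Omega)$ acts by isometries of the proper Hilbert metric on $\Omega$, the discrete group $G$ acts properly discontinuously, and combined with syndeticity $GK=\Omega$ this yields a compact quotient orbifold $\Omega/G$. Passing to a torsion-free finite-index subgroup via Selberg's lemma realizes $\Omega$ as a divisible convex affine domain, so Vey's Theorem \ref{VeyThm} presents $\Omega$ as a cone with some vertex $v$. But then the one-parameter family of dilations $x\mapsto v+\lambda(x-v)$, $\lambda>0$, lies in $\Aut(\Omega)$ and forms a positive-dimensional connected subgroup, contradicting the discreteness of $G$.

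The hinge of the argument is the invocation of Proposition \ref{Vey}(iii): it requires both that the averaged point $p$ actually lies in $\Omega$ (not merely $\overline{\Omega}$) and that the fixed affine subspace $V$ is invariant under the full group $G$, not just $G^0$. Convexity of $\Omega$ takes care of the first point and normality of $G^0$ of the second. The subsequent passage from a discrete cocompact action to a cone is then routine.
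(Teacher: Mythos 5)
Your proof is correct and follows essentially the same route as the paper's: a fixed point of the compact $G^0$ in $\Omega$, normality of $G^0$ in $G$ combined with Proposition \ref{Vey} to force $G^0$ to be trivial, hence $G$ discrete, hence $\Omega$ divisible and therefore a cone by Theorem \ref{VeyThm}, whose dilation subgroup contradicts discreteness. The only cosmetic differences are that you invoke Proposition \ref{Vey}(iii) on the fixed affine subspace where the paper uses $CH(Gx_0)=\Omega$ from part (ii), and that you fill in details (Haar averaging, Selberg's lemma) that the paper leaves implicit.
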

\begin{proof}
If $G^0$ is compact, it will have a fixed point $x_0$ in $\Omega$.
Since $G^0$ is normal in $G$, $G^0$ will fix $Gx_0$ pointwise.
This implies that $G^0$ is trivial because $CH(Gx_0)=\Omega$ by Proposition \ref{Vey}, so
$\Aut(\Omega)$ is a discrete group. But $\Aut(\Omega)$ cannot be
discrete because every properly convex divisible affine domain is a cone
by Theorem \ref{VeyThm} and the automorphism group of a cone is not discrete. So
we conclude that $G^0$ is noncompact.

\end{proof}

\section*{Acknowledgements }
This work was done at Korea Institute for Advanced Study (KIAS) while 
 the first author was a visiting professor in 2016-2017. The first author  is grateful for the warm hospitality during her stay. 
 Both authors thank an anonymous referee for several insightful questions.

\end{document}